\documentclass[11pt]{article}
\usepackage[utf8]{inputenc}
\usepackage[T1]{fontenc}
\usepackage[english]{babel}
\usepackage{csquotes}

\usepackage{amsmath}
\usepackage{amsfonts}
\usepackage{amssymb}

\usepackage{quiver}

\usepackage[bbgreekl]{mathbbol}
\DeclareMathSymbol\bbDelta  \mathord{bbold}{"01}

\usepackage{extarrows}

\usepackage{titlesec}
\usepackage{verbatim}
\usepackage{enumitem}
\usepackage{geometry}
\usepackage{amsthm}
\usepackage{fancyvrb}
\usepackage{todonotes}
\usepackage{mathtools}
\tikzcdset{scale cd/.style={every label/.append style={scale=#1},
    cells={nodes={scale=#1}}}}
\newtheoremstyle{theoremdd}
{\topsep}
{\topsep}
{\itshape}
{0pt}
{\bfseries}
{.}
{ }
{\thmname{#1}\thmnumber{ #2}\textnormal{\thmnote{ (#3)}}}
\theoremstyle{theoremdd}
\newtheorem{theorem}{Theorem}[section]
\newtheorem{definition}[theorem]{Definition}

\newtheorem{propdef}[theorem]{Proposition-definition}
\newtheorem{thdef}[theorem]{Theorem-definition}
\newtheorem{corollary}[theorem]{Corollary}
\newtheorem{lemma}[theorem]{Lemma}

\newtheoremstyle{theoremddnoproof}
{\topsep}
{\topsep}
{\itshape}
{0pt}
{\bfseries}
{.}
{ }
{\thmname{#1}\thmnumber{ #2}\textnormal{\thmnote{ (#3)}}}
\theoremstyle{theoremddnoproof}

\theoremstyle{definition}
\newtheorem{remark}[theorem]{Remark}

\usepackage[colorlinks=true, citecolor=blue, linkcolor=blue, urlcolor=blue]{hyperref}
\usepackage{cleveref}
\usepackage[backend=biber, maxbibnames=99]{biblatex}
\usepackage{colonequals}

\usepackage{xcolor}
\usepackage{xspace}

\usepackage[frozencache=true, cachedir=minted-cache]{minted}

\usepackage{fontspec}

\newcommand{\iso}{\stackrel{\sim}{\smash{\rightarrow}\rule{0pt}{0.4ex}}}

\newcommand{\mathlib}{\texttt{Mathlib}\xspace}

\definecolor{codegray}{gray}{0.9}

\newmintinline[lean]{lean4}{bgcolor=codegray}
\newminted[leancode]{lean4}{bgcolor=codegray, fontsize=\footnotesize}
\title{Unbiasing symmetric monoidal categories in Lean}

\newcommand{\mathliburl}{https://github.com/leanprover-community/mathlib4}
\newcommand{\repourl}{https://github.com/robin-carlier/SymmMonCoherence}

\newcommand{\refCommitMathlib}{v4.28.0} 
\newcommand{\refCommitRepo}{2070e1d536854a49d4c4f98cd73857ba40c4cb92} 
\newcommand{\mathlibLinkRange}[4]{%
    \href{\mathliburl/blob/\refCommitMathlib/#1\#L#2-L#3}{#4}%
}
\newcommand{\smcLinkRange}[4]{%
    \href{\repourl/blob/\refCommitRepo/#1\#L#2-L#3}{#4}%
}
\newcommand{\mathlibLink}[3]{%
    \href{\mathliburl/blob/\refCommitMathlib/#1\#L#2}{#3}%
}
\newcommand{\smcLink}[3]{%
    \href{\repourl/blob/\refCommitRepo/#1\#L#2}{#3}%
}
\newcommand{\mathlibPR}[1]{%
    \href{\mathliburl/pull/#1}{\##1}%
}
\author{Robin Carlier\footnote{The author acknowledges support
		from the ANR project ``HQDIAG'', ANR-21-CE40-0015.}}

\date{February 28, 2026}
\begin{document}
\maketitle
\begin{abstract}
We present a formalization in Lean~4, within the framework of the mathematical library \mathlib, of the unbiasing process
for symmetric monoidal categories. This is realized by extending the data of a symmetric monoidal category to a
\(\mathrm{Cat}\)-valued pseudofunctor from the (2,1)-category of spans of finite sets,
encoding tensor products of higher arities and their coherences.
The construction relies on a formalization of Mac Lane's coherence theorem using
Piceghello's presentation of free symmetric monoidal categories as symmetric lists,
and uses an encoding of universal formulas via an appropriate Kleisli bicategory.
\end{abstract}
\section{Introduction}

When setting up the basic algebraic hierarchy for mathematics, one defines a commutative monoid as a set \(M\)
equipped with the data of a unit element \(0_M\) and a commutative and associative composition law
\(\cdot +_M \cdot : M \times M \rightarrow M \). Given elements \(a, b, c, d\) of such a commutative monoid \(M\),
the data allows us to make formal sense of expressions such as \(\left(a +_M b\right) +_M \left(c +_M d\right)\),
\(\left(a +_M \left(c +_M b\right)\right) +_M d\) or \(b +_M \left(a +_M \left(d +_M c\right)\right)\),
and the properties satisfied by the function \( +_M \) let us prove that all of these expressions are equal.
For mathematical applications, it very quickly becomes important to extend the binary operation to operators with
higher arities. For instance, a formula like
\[\sum\limits_{\rho \in \mathrm{Irr}(G)} (\mathrm{dim}(\rho))^2 = \mathrm{Card}(G)\]
for a finite group \(G\)
involves a summation operator that is defined over an arbitrary finite set of elements of the underlying monoid
without a canonical order.
The mere existence of such an operator requires a proof that it is well-defined, i.e., that the resulting value
cannot depend on how the underlying list of summands is enumerated or associated.

In the context of software proof assistants, libraries of formalized mathematics, such as \mathlib~\cite{mathlib2020}\cite{githubmathlib4}, have to go through
that process in order to give their users a framework for working with such finite sums.
For instance, \mathlib has a \mathlibLinkRange{Mathlib/Algebra/BigOperators/Group/Multiset/Defs.lean}{38}{44}{declaration}
\begin{leancode}
def Multiset.sum {M : Type u} [AddCommMonoid M] : Multiset M → M := …
\end{leancode}
that realizes the sum over a multiset of elements of \lean{M} by using the definition of
\lean{Multiset M} as a quotient of the type \lean{List M} modulo permutations. The declaration
\lean{Finset.sum} then builds on that definition to provide unbiased sums to users.\footnote{
These sums still bundle a specific choice regarding associativity, coming from the fact that
\lean{List.sum} is defined as folding the sum operator on the left. The irrelevance of that choice is then proven
in e.g., \lean{Finset.sum_disjUnion}. Following the thread of proofs, it reduces to
\lean{Multiset.fold_add}, which is where associativity ends up being used.}

\subsection*{From commutative monoids to symmetric monoidal categories}

The natural categorification of commutative monoids is symmetric monoidal categories.
Symmetric monoidal categories were independently introduced by Saunders Mac Lane \cite{Maclane1963NaturalAA} as an
abstraction of the structure shared by the categorical products in categories with products,
the tensor product on modules and the tensor product of chain complexes.
This well-studied concept is now considered a classical example of one of the many
\emph{pseudoalgebraic} structures one can put on a category.

The classical definition of a symmetric monoidal category (\cite{Maclane1963NaturalAA}, \cite[Def. 6.1.2]{Borceux_1994}) consists of
a category \( \mathsf{C} \) equipped with a tensor product bifunctor
\( \otimes_\mathsf{C} : \mathsf{C} \times \mathsf{C} \to \mathsf{C} \), a unit object \( \mathbb{1}_{\mathsf{C}} \) and
  for all objects \(X, Y, Z\) of \(\mathsf{C}\), natural isomorphisms\begin{align*}
    \alpha_{X,Y,Z}^{\mathsf{C}} &: (X \otimes_{\mathsf{C}} Y) \otimes_\mathsf{C} Z \iso X \otimes_{\mathsf{C}} (Y \otimes_{\mathsf{C}} Z) \\
  \lambda_{X}^{\mathsf{C}} &: \mathbb{1}_{\mathsf{C}} \otimes_{\mathsf{C}} X \iso X \\
  \rho_{X}^{\mathsf{C}} &: X \otimes_{\mathsf{C}} \mathbb{1}_{\mathsf{C}} \iso X \\
  \beta_{X, Y}^{\mathsf{C}} &: X \otimes_{\mathsf{C}} Y \iso Y \otimes_{\mathsf{C}} X \\
\end{align*}
respectively called associators, left unitors, right unitors, and braidings, such that the
following diagrams commute:

\[\begin{tikzcd}[cramped]
	& {((X \otimes_{\mathsf{C}} Y)\otimes_{\mathsf{C}} Z) \otimes_{\mathsf{C}} T} \\
	{(X \otimes_{\mathsf{C}} Y)\otimes_{\mathsf{C}} (Z \otimes_{\mathsf{C}} T)} && {(X \otimes_{\mathsf{C}} (Y \otimes_{\mathsf{C}} Z)) \otimes_{\mathsf{C}} T} \\
	\\
	{X \otimes_{\mathsf{C}} (Y \otimes_{\mathsf{C}} (Z \otimes_{\mathsf{C}} T))} && {X \otimes_{\mathsf{C}} ((Y \otimes_{\mathsf{C}} Z) \otimes_{\mathsf{C}} T)}
	\arrow["{\alpha^{\mathsf{C}}_{X \otimes_{\mathsf{C}} Y,Z,T}}"', from=1-2, to=2-1]
	\arrow["{\left(\alpha^{\mathsf{C}}_{X,Y,Z}\right) \otimes_{\mathsf{C}} T}", from=1-2, to=2-3]
	\arrow["{\alpha^{\mathsf{C}}_{X,Y,Z \otimes_{\mathsf{C}} T}}"', from=2-1, to=4-1]
	\arrow["{\alpha^{\mathsf{C}}_{X,Y \otimes_{\mathsf{C}} Z, T}}", from=2-3, to=4-3]
	\arrow["{X \otimes_{\mathsf{C}} \left(\alpha^{\mathsf{C}}_{Y,Z,T}\right)}", from=4-3, to=4-1]
\end{tikzcd}\]

\[\begin{tikzcd}[cramped]
	{(X \otimes_{\mathsf{C}} \mathbb{1}_{\mathsf{C}}) \otimes_{\mathsf{C}} Y} && {X \otimes_{\mathsf{C}} (\mathbb{1}_{\mathsf{C}} \otimes_{\mathsf{C}} Y)} \\
	& {X \otimes_{\mathsf{C}} Y}
	\arrow["{\alpha_{X, \mathbb{1}_{\mathsf{C}}, Y}^{\mathsf{C}}}", from=1-1, to=1-3]
	\arrow["{\rho^{\mathsf{C}}_X \otimes_{\mathsf{C}} Y}"'{pos=0.2}, from=1-1, to=2-2]
	\arrow["{X \otimes_{\mathsf{C}} \lambda^{\mathsf{C}}_Y}"{pos=0.1}, from=1-3, to=2-2]
\end{tikzcd}\]

\[\begin{tikzcd}[cramped]
	&&& {(X \otimes_{\mathsf{C}} Y) \otimes_{\mathsf{C}} Z} \\
	{X \otimes_{\mathsf{C}} Y} & {Y \otimes_{\mathsf{C}} X} & {X \otimes_{\mathsf{C}} (Y \otimes_{\mathsf{C}} Z)} && {(Y \otimes_{\mathsf{C}} X) \otimes_{\mathsf{C}} Z} \\
	& {X \otimes_{\mathsf{C}} Y} & {(Y \otimes_{\mathsf{C}} Z) \otimes_{\mathsf{C}} X} && {Y \otimes_{\mathsf{C}} (X \otimes_{\mathsf{C}} Z)} \\
	&&& {Y \otimes_{\mathsf{C}} (Z \otimes_{\mathsf{C}} X)}
	\arrow["{\alpha^{\mathsf{C}}_{X, Y, Z}}"', from=1-4, to=2-3]
	\arrow["{\beta_{X, Y}^{\mathsf{C}} \otimes_{\mathsf{C}} Z}", from=1-4, to=2-5]
	\arrow["{ \beta^{\mathsf{C}}_{X, Y}}", from=2-1, to=2-2]
	\arrow["{\mathrm{Id}_{X \otimes_{\mathsf{C}} Y}}"', from=2-1, to=3-2]
	\arrow["{ \beta^{\mathsf{C}}_{Y, X}}", from=2-2, to=3-2]
	\arrow["{\beta^{\mathsf{C}}_{X, Y \otimes_{\mathsf{C}} Z}}"', from=2-3, to=3-3]
	\arrow["{\alpha_{Y, X, Z}^{\mathsf{C}}}", from=2-5, to=3-5]
	\arrow["{\alpha^{\mathsf{C}}_{Y, Z, X}}"', from=3-3, to=4-4]
	\arrow["{Y \otimes_{\mathsf{C}} \beta_{X, Z}^{\mathsf{C}}}", from=3-5, to=4-4]
\end{tikzcd}.\]

Unlike the case of commutative monoids, associativity and commutativity in symmetric monoidal categories are
expressed as a \emph{structure} instead of a \emph{property}.
This is part of the standard philosophy of category theory: equality of objects in a category is a
notion that is not invariant under equivalences of categories and one should
instead replace the property of being equal with the structure of a chosen isomorphism. The properties appear
one categorical level higher as commutativity of diagrams involving the isomorphisms.

The presentation above gives a rather easy and straightforward way to define symmetric
monoidal categories: only low-arity data and coherences need to be supplied. On the other hand, similarly to the case
of commutative monoids, some applications
require an unbiased point of view on symmetric monoidal categories and require availability of tensor products and
coherences in every possible arity.
For instance, given a symmetric monoidal category \( \mathsf{C} \) with countable coproducts and such that the tensor
product of \( \mathsf{C} \) preserves suitable colimits in each variable,
the free commutative monoid on an object \( c\in \mathsf{C} \) can be identified with an \(\mathbb{N}\)-indexed
coproduct of objects \( \mathrm{Sym}_n^{\mathsf{C}}(c) \) where \( \mathrm{Sym}_n^{\mathsf{C}} \) sends an object
\( a \) to the colimit of the \( \mathrm{B}\Sigma_n \)-indexed diagram in \( \mathsf{C} \) that corresponds to the object
\( a^{\otimes n} \) with its natural \( \Sigma_n \) action.
This fact cannot even be stated unless the binary
tensor product of \( \mathsf{C} \) and its symmetries have been extended to higher arities.

This formality is usually resolved by invoking Mac Lane’s celebrated coherence theorem \cite[Thm. 4.2]{Maclane1963NaturalAA}
which loosely states
that there is only one ``canonical'' isomorphism made out of associators and unitors between two iterated tensor
products that only differ by bracketing, and only one ``canonical'' morphism attached to a given permutation of the variables
of an iterated tensor product. We are being intentionally vague with the statement here as we will be giving a more precise formulation
in the course of this paper (see Theorem~\ref{coherence_theorem}).

\subsubsection*{Symmetric monoidal higher categories}
Another very important application of unbiased symmetric monoidal categories comes in the form of the higher-categorical
point of view on symmetric monoidal categories. The definition of symmetric monoidal categories we gave in the previous
paragraph does not scale very well as we increase the categorical level: the triangle, pentagon, hexagon and symmetry
identities are about equalities of (1-)morphisms in \( \mathsf{C} \); if \( \mathsf{C} \) is replaced by a bicategory,
then all these equalities need to be replaced by the data of suitable 2-isomorphisms in \( \mathsf{C} \)
and new coherence identities need to be satisfied by these 2-isomorphisms (see e.g.,
\cite[Appendix C]{schommerpries2014} for an algebraic description of symmetric monoidal bicategories).
To further convince oneself that this algebraic approach can hardly scale further in practice, one can look
at the definition of a (non-symmetric) monoidal tricategory as a one-object tetracategory in
\cite{hoffnung2013spans2categoriesmonoidaltricategory}.

In the context of the theory of \((\infty,1)\)-categories, several equivalent notions of symmetric monoidal
\((\infty,1)\)-categories have been studied:
Lurie’s \( \infty \)-operads \cite{HA},
Moerdijk and Weiss's dendroidal sets \cite{Moerdijk2007},
Cranch's Lawvere symmetric monoidal \((\infty,1)\)-categories \cite{cranch2010}, to cite only a few.

All of the models mentioned above are similar in that the data of a symmetric monoidal
\((\infty,1)\)-category is interpreted (either directly, or via suitable comparison theorems) as a pseudofunctor to the
\((\infty, 1)\)-category of \((\infty, 1)\)-categories
(or, equivalently, fibrations), with source a diagram category that contains already higher arity data and symmetries:
the category of pointed finite sets (a.k.a. Segal’s category \(\Gamma\)) for \(\infty\)-operads,
the category of rooted trees \( \Omega \) in the dendroidal sets setting and the (2,1)-category of spans of finite sets for
Cranch’s Lawvere symmetric monoidal \((\infty,1)\)-categories. In all these approaches, these source categories encode
all the diagrams that are expected to ``commute'' in the resulting symmetric monoidal \((\infty,1)\)-category
and packaging the data as an \(\infty\)-functor to \(\mathrm{Cat}_\infty\) realizes the diagrams.

One of the important aspects of \((\infty,1)\)-category theory is that, when a notion is applied to objects that come
from ordinary categories, it reduces in almost all cases to the corresponding notion in ordinary category theory:
for instance, when viewing an ordinary category $\mathsf{C}$ as an \((\infty,1)\)-category,
the \(\infty\)-groupoid of morphisms from \( x \) to \(y\) is isomorphic to the set of morphisms from
\( x \) to \( y \) in \( \mathsf{C} \) when viewing sets as discrete \(\infty\)-groupoids.
Similarly, \((\infty,1)\)-categorical notions of limits and colimits reduce to ordinary limits and colimits when the
source and targets of the diagrams are ordinary categories.
Symmetric (and non-symmetric) monoidal categories are no exception to this principle:
when viewed as an \((\infty,1)\)-category, the data of a symmetric monoidal ordinary category is equivalent to
the data of a symmetric monoidal \((\infty,1)\)-category.
Such a statement cannot avoid the ``unbiasing'' step that extends the binary tensor product to any finite family of
objects with suitable inner symmetries. Mac Lane’s coherence theorem is usually mentioned when this step occurs, although its invocation may not always be very explicit (\cite[Construction 2.0.0.1, (ii) and (iii)]{HA}).
We refer the reader to \cite{JARDINE1991103} for one of the few places in the literature where the full process of
unbiasing symmetric monoidal categories is actually spelled out, though not in terms of higher categories.

\subsubsection*{State of current formalizations in Lean}

In the Lean~4 proof assistant, and more specifically in the mathematics library \mathlib, symmetric monoidal categories
have been \mathlibLinkRange{Mathlib/CategoryTheory/Monoidal/Braided/Basic.lean}{361}{367}{formalized} using the classical biased approach. This is very reasonable,
as this is the most elementary approach (\mathlib's definition could be performed in a file
that imports nothing but the definition of categories and the definition of isomorphisms)
as well as the one that leads to constructors with the least data and proof obligations.
This ease of definition comes at the price that, while elementary manipulation can be performed in a nice way,
some more complex constructions cannot easily be performed, for instance, the description of free commutative monoid
objects we mentioned earlier.
Another reason to seek an unbiased definition is to provide the necessary ``glue'' for the higher-categorical
perspectives mentioned previously. It seems likely that \mathlib will eventually include foundational results for
higher categories, making this connection essential.
This feeling is based on the observation that \mathlib already
has definitions relevant to quasi-categorical foundations of the theory of higher categories such as
\mathlibLink{Mathlib/AlgebraicTopology/Quasicategory/Basic.lean}{39}{quasi-categories},
\mathlibLink{Mathlib/AlgebraicTopology/SimplicialNerve.lean}{215}{simplicial nerves of simplicial categories},
a growing amount of simplicial homotopy theory,
as well as the existence of an ongoing project to formalize Riehl-Verity’s theory of \(\infty\)-cosmoi and
formal category theory inside an \(\infty\)-cosmos.
As this theory develops,
it will become important to be able to reinterpret objects coming from the already existing
formalization of ordinary category theory that is available in \mathlib as the corresponding higher-categorical objects.

The coherence theorem for (non-symmetric) monoidal categories is already formalized in \mathlib, but no formal unbiasing
of even non-symmetric monoidal categories is available.

\subsubsection*{Results}

Borrowing terminology from \cite[Def. 00AL]{kerodon}, we will call the \emph{pith} of a bicategory \( \mathsf{B} \) the bicategory
that is obtained by discarding non-invertible 2-morphisms in \(\mathsf{B}\); we will denote it by \(\mathrm{Pith}(\mathsf{B})\).
Given a category \( \mathsf{C} \) with pullbacks, we will denote by \(\mathrm{Span}(\mathsf{C})\) the bicategory of spans in \(\mathsf{C}\).

The main result that our project formalizes is the following:
\begin{theorem}\label{main_th}
  Let \(\left( \mathsf{C}, \otimes_{\mathsf{C}}, \mathbb{1}_{\mathsf{C}}, \alpha^{\mathsf{C}}, \lambda^{\mathsf{C}},
  \rho^{\mathsf{C}}, \beta^{\mathsf{C}}\right) \) be a symmetric monoidal category.
  There exists a pseudofunctor
  \[\mathsf{C}^{\otimes} : \mathrm{Pith}\left(\mathrm{Span}\left({\mathrm{Fin}}\right)\right) \to \mathrm{Cat}\]
  that sends a finite set \( J \) to the product category \( \mathsf{C}^{J} \), and sends a span
  \[\begin{tikzcd}
    & {A} \\
    {J} && {K}
    \arrow["{f}"', from=1-2, to=2-1]
    \arrow["{g}", from=1-2, to=2-3]
  \end{tikzcd}\]
  to the functor \(g_!f^*\). These functors are defined as follows:
  \begin{itemize}
    \item The functor \(f^* : \mathsf{C}^{J} \to \mathsf{C}^{A} \) is precomposition along \( f \).
    \item The functor \(g_! : \mathsf{C}^{A} \to \mathsf{C}^{K} \) sends an
      \( A \)-indexed family \( \left(x_a\right)_{a \in A} \) to the \(K\)-indexed family
      \[
        \left(\bigotimes\limits_{a \in g^{-1}\left(\{k\}\right)} x_a \right)_{k \in K}.
      \]
  \end{itemize}
\end{theorem}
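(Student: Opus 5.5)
The plan is to reduce everything to Mac Lane's coherence theorem (Theorem~\ref{coherence_theorem}), phrased through the free symmetric monoidal category on a set. We first construct, for each finite set \(A\) and each finite map \(g : A \to K\), the functor \(g_! : \mathsf{C}^{A} \to \mathsf{C}^{K}\). Since \(g^{-1}(\{k\})\) carries no canonical order, we cannot simply pick a bracketing. We proceed in two steps.

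\emph{First step.} We build, for each finite set \(S\), an \emph{unbiased} tensor \(\bigotimes_{S} : \mathsf{C}^{S} \to \mathsf{C}\). Concretely, take the free symmetric monoidal category \(\mathrm{F}(S)\) on \(S\), presented via Piceghello's symmetric lists. A family \(x \in \mathsf{C}^S\) induces a strong symmetric monoidal functor \(\mathrm{F}(S) \to \mathsf{C}\). Coherence says \(\mathrm{F}(S)\) restricted to words using each letter exactly once is a contractible groupoid. So any two enumerations and bracketings of \(S\) give tensors linked by a unique canonical isomorphism. We then define \(\bigotimes_S x\) as the tensor attached to one such word; different choices give canonically isomorphic functors, and only the isomorphism class matters up to the coherence data we carry.

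\emph{Second step.} We supply the pseudofunctor's structure 2-cells. On 1-cells we set \((f,g) \mapsto g_! f^*\). On invertible 2-cells, i.e.\ bijections \(A \cong A'\) over \(J\) and \(K\), we use the symmetry isomorphisms \(\bigotimes_{g^{-1}(k)} \cong \bigotimes_{g'^{-1}(k)}\) induced by reindexing. For composition of spans \(J \leftarrow A \to K\) and \(K \leftarrow B \to L\), with pullback \(P = A \times_K B\), we need a natural isomorphism
\[(g')_!\, (f')^*\, g_!\, f^* \cong (g' \circ \pi_B)_!\, (f \circ \pi_A)^*.\]
This splits into a \emph{Beck--Chevalley} isomorphism \((f')^* g_! \cong (\pi_B)_! (\pi_A)^*\), which is essentially an identity since the fibres of the pullback are the fibres of \(g\), together with a \emph{Fubini} isomorphism \(h_! g_! \cong (h g)_!\). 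The Fubini isomorphism identifies \(\bigotimes_{l}\bigotimes_{k \mapsto l} x\) with the tensor over the union of the fibres, using associators. Unitors are needed for \(\mathrm{id}_!\), whose fibres are singletons, and for tensors over empty fibres, which give \(\mathbb{1}_{\mathsf{C}}\).

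\emph{Third step: the pseudofunctor axioms.} These are naturality of the composition isomorphism in invertible 2-cells, associativity for triple composites, and the unit axioms. Every component of every diagram involved is a composite of associators, unitors and braidings between formal tensor words in distinct variables. By the first step these are images of morphisms of \(\mathrm{F}(S)\) in its contractible part, so they commute automatically. To make this uniform, I would encode ``universal formulas'' as morphisms in a Kleisli bicategory for the free symmetric monoidal category 2-monad. Each structure cell is then the image of a cell in a locally contractible bicategory, and evaluation at \(\mathsf{C}\) is a pseudofunctor. The pseudofunctor \(\mathsf{C}^{\otimes}\) is obtained by composing a pseudofunctor \(\mathrm{Pith}(\mathrm{Span}(\mathrm{Fin})) \to \text{Kleisli}\), whose coherence is trivial by contractibility, with this evaluation.

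The main obstacle is this third step. The difficulty is not mathematical but bookkeeping: one must show that each concrete isomorphism built in the second step literally agrees with the image of the canonical formal morphism. Pullbacks in \(\mathrm{Fin}\) are only chosen up to isomorphism, and fibres over disjoint unions must be reindexed, so this requires care. I would first establish the contractibility statement (hom-sets in \(\mathrm{F}(S)\) between multiplicity-free words are subsingletons) as a standalone lemma. I would then route every comparison through it rather than checking diagrams by hand.
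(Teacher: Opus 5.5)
Your overall architecture is the paper's: symmetric lists as free symmetric monoidal categories, a Kleisli bicategory \(\mathcal{K}_{\mathrm{SList}}\) of universal formulas, and an evaluation pseudofunctor \(\mathsf{C}_{\mathcal{K}}\) out of its opposite, precomposed with a pseudofunctor \(\Lambda\) from \(\mathrm{Pith}(\mathrm{Span}(\mathrm{Fin}))\) to \(\mathcal{K}_{\mathrm{SList}}^{\mathrm{op}}\). Your standalone lemma is Lemma~\ref{linear-iff}.

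The gap is in your third step. \(\mathcal{K}_{\mathrm{SList}}\) is \emph{not} locally contractible, and the coherence of \(\Lambda\) is \emph{not} trivial by contractibility. A span \(J \xleftarrow{f} A \xrightarrow{g} K\) goes to a \(K\)-indexed family of symmetric lists in \(J\). Its component at \(k\) has underlying multiset \(\{f(a) : a \in g^{-1}(\{k\})\}\), which has repetitions as soon as \(f\) is not injective on the fibres of \(g\). For \(\{*\} \leftarrow \{1,2\} \rightarrow \{*\}\) the image is \([*,*]\), whose automorphism group is \(\Sigma_2\). The swap automorphism of this span is sent to the swap of \([*,*]\); after evaluation this becomes, up to structural isomorphisms, \(\beta^{\mathsf{C}}_{x,x}\), which is how \(\mathsf{C}^{\otimes}\) remembers the braiding. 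Likewise, the composition isomorphisms of \(\Lambda\) for such spans depend on how the chosen pullback apex is identified with the original one. None of this data is forced by, or checkable through, a subsingleton argument. A Kleisli 1-cell forgets the apex, and with it the distinct variables your Mac Lane argument relies on.

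What is missing is the paper's notion of a Pith--Beck--Chevalley system (Definition~\ref{def-pbc}) together with Theorem~\ref{exists-pseudo-bc-system}. That theorem works for an \emph{arbitrary} bicategory \(B\). Its input is a pair of pseudofunctors \(U : C \to B\), \(V : C^{\mathrm{op}} \to B\) agreeing on objects, plus base change isomorphisms \(V(b)U(r) \iso U(l)V(t)\) compatible with horizontal and vertical pasting and with unit squares. These assemble into a pseudofunctor out of \(\mathrm{Pith}(\mathrm{Span}(C))\), whose action on isomorphisms of spans is built from \(\eta_{\mathcal{S}}(\phi)\). This assembly is a genuine bicategorical computation, with compatibility with the associator of spans the hardest part, and it uses no contractibility.

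Contractibility is used only to produce the system. Take \(V(f) = \iota_K \circ f\) and \(U(f) = \mathcal{D}_{J,K}(V(f))\). One computes \(\lVert V(f)(j) \rVert = \{f(j)\}\) and \(\lVert U(f)(k) \rVert = f^{-1}(\{k\})\). In the notation of Definition~\ref{def-pbc}, both sides of every base change have underlying multiset \(r^{-1}(\{b(k)\})\). So every list involved is linear, and every pseudofunctoriality and pasting condition holds in a subsingleton.

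Your second step already isolates the right pieces: your Fubini isomorphism is \(U_{\mathrm{comp}}\), and your Beck--Chevalley isomorphism is \(\mathrm{BC}\). You need to confine the contractibility argument to them and supply the span-assembly theorem separately. Alternatively, you could run Mac Lane's argument directly in \(\mathsf{C}\) with variables indexed by apex elements. But then each pseudofunctor axiom requires tracking leaf bijections across different pullback apexes, which is exactly the bookkeeping the Kleisli target throws away.
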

This theorem expresses that a symmetric monoidal category as defined in the usual biased version defines a
Lawvere symmetric monoidal \((\infty,1)\)-category (or rather, an appropriate ordinary-categorical version of this)
in the terminology of \cite{cranch2010}.
\begin{remark}
Some readers might be surprised by our choice of stating this result in terms of Cranch's Lawvere symmetric
monoidal categories instead of Lurie's \(\infty\)-operads, the latter being the most used model in the
\((\infty,1)\)-categorical literature.
  The full comparison theorem stating that the \((\infty,1)\)-categories of Lawvere symmetric monoidal
    \((\infty,1)\)-categories and Lurie-style symmetric monoidal \((\infty,1)\)-categories are equivalent
    requires some work (see \cite[Prop. C.1]{BACHMANN2021} for a rather concise proof).
    Fortunately, the easiest direction of the comparison result is the one that turns a Lawvere symmetric
    monoidal \((\infty,1)\)-category into a Lurie-style symmetric monoidal \((\infty,1)\)-category: this is achieved by
    precomposition along an explicitly defined pseudofunctor from \(\mathrm{Fin}_*\) (seen as a locally discrete
    bicategory) to \(\mathrm{Pith}\left(\mathrm{Span}\left({\mathrm{Fin}}\right)\right)\), followed by taking the
    Grothendieck construction (which is the easy direction of the straightening/unstraightening equivalence).
    Hence, our result would easily translate into the construction of a pseudofunctor out of the category of pointed
    finite sets.
    On the other hand, constructing something out of the pith of the bicategory of spans of finite sets
    from the data of a suitable pseudofunctor out of pointed finite sets
    would be more challenging, as this would involve performing a right Kan extension of a pseudofunctor
    along another pseudofunctor rather than restricting it.
\end{remark}
We believe that this work constitutes the first mechanized proof of the statement in Theorem~\ref{main_th}.

As announced earlier in this introduction, a primary ingredient in proving the above theorem is the coherence theorem for
symmetric monoidal categories. We implement the following form of this theorem:

\begin{theorem}\label{coherence_theorem}
  Let \(J\) be a set. The free symmetric monoidal category on \(J\) is equivalent (as a symmetric monoidal category)
  to the category \(\mathrm{Core}\left(\mathrm{Fin}_{/J}\right)\) of finite sets and bijections over
  \(J\) endowed with the symmetric monoidal structure induced by the cocartesian symmetric monoidal structure
  on sets.
\end{theorem}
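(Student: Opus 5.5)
We prove the theorem by passing through an intermediate, strict model of the free symmetric monoidal category: Piceghello's category of \emph{symmetric lists} on \(J\). Its objects are lists of elements of \(J\). A morphism \(\ell \to \ell'\) is a permutation \(\sigma\) of \(\{0,\dots,|\ell|-1\}\) with \(\ell'_{\sigma(i)} = \ell_i\) for all \(i\). The tensor product is concatenation and the unit is the empty list. Associators and unitors are identities, and the braiding \(\ell \otimes \ell' \to \ell' \otimes \ell\) is the block-swap permutation.

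The plan has three steps.

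\textbf{Step 1: freeness of symmetric lists.} Write \(\mathrm{SList}(J)\) for the category of symmetric lists on \(J\). We show it is free, meaning that for every symmetric monoidal category \(\mathsf{D}\), restriction along the singleton map \(J \to \mathrm{SList}(J)\), \(j \mapsto [j]\), is an equivalence between strong symmetric monoidal functors \(\mathrm{SList}(J) \to \mathsf{D}\) and functions \(J \to \mathrm{Ob}(\mathsf{D})\).

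\begin{enumerate}
\item Essential surjectivity. Given \(X : J \to \mathsf{D}\), define \(\Phi(\ell)\) as the left-folded tensor \(X_{\ell_0} \otimes \cdots \otimes X_{\ell_{n-1}}\).
\item Action on morphisms. Write each permutation as a word in adjacent transpositions and send each transposition \(s_i\) to the whiskered braiding \(\beta\), conjugated by associators.
\item Well-definedness. By Matsumoto's theorem, or more simply the Coxeter presentation of \(\Sigma_n\), it suffices to check three relations: \(s_i^2 = 1\), which is the symmetry axiom; \(s_i s_j = s_j s_i\) for \(|i-j|>1\), which follows from naturality and bifunctoriality; and the braid relation \(s_i s_{i+1} s_i = s_{i+1} s_i s_{i+1}\), which follows from the hexagon combined with naturality of \(\beta\) (the Yang--Baxter equation).
\item Monoidal structure. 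The monoidal structure maps \(\Phi(\ell)\otimes\Phi(\ell') \iso \Phi(\ell \mathbin{+\!\!+} \ell')\) are the canonical associator isomorphisms. Their coherence is where we invoke the already formalized non-symmetric coherence theorem, which identifies all such associator/unitor composites.
\item Full faithfulness of restriction. A monoidal natural transformation between two such functors is determined by its components on singletons, by induction on list length using monoidality.
\end{enumerate}

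\textbf{Step 2: equivalence with \(\mathrm{Core}(\mathrm{Fin}_{/J})\).} Define a functor \(\mathrm{SList}(J) \to \mathrm{Core}(\mathrm{Fin}_{/J})\) sending \(\ell\) to \((\mathrm{Fin}\,|\ell|, i \mapsto \ell_i)\) and \(\sigma\) to itself. Check each property in turn.
\begin{itemize}
\item Fully faithful: it is bijective on hom-sets by definition of the morphisms.
\item Essentially surjective: any finite set over \(J\) can be enumerated, giving a list.
\item Strong symmetric monoidal: the coproduct \(\mathrm{Fin}\,m \sqcup \mathrm{Fin}\,n\) is identified with \(\mathrm{Fin}(m+n)\) via the standard sum-type equivalence, and under this identification the coproduct symmetry corresponds to the block swap.
\end{itemize}
Transporting freeness along this symmetric monoidal equivalence yields the theorem. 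Equivalently, \(\mathrm{Core}(\mathrm{Fin}_{/J})\) satisfies the same 2-universal property as the free symmetric monoidal category, so the two are equivalent.

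\textbf{Main obstacle.} The hardest part is Step 1's well-definedness on morphisms: showing that the assignment of braiding composites to permutations does not depend on the chosen decomposition into transpositions. It also has to be compatible with composition and concatenation of permutations, which is needed for the functor to be monoidal.

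To avoid relying on a formalized Coxeter presentation, I would first try a recursive definition instead of words. Decompose each permutation \(\sigma\) of \(n+1\) elements as a permutation of \(n\) elements followed by a cycle moving the last element into position \(\sigma(n)\). Send the cycle to a specific composite of braidings and associators, the canonical ``insertion'' morphism. Then prove functoriality by induction on \(n\), using the hexagon to commute insertions past each other. This replaces a global word problem with local lemmas about insertion maps.

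The symmetric lists presentation is chosen precisely because it makes this induction structural on lists.
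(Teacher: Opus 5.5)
Your outline is sound, but it places the difficulty differently from the paper. The paper \emph{presents} $\mathrm{SList}(J)$ by generators (swaps and their cons-whiskerings) and relations, so that functors and transformations out of it come from structural recursion (Lemma~\ref{up-slist}). It uses the Coxeter presentation of $\Sigma_n$ (formalized via the exchange property) to show that the $\overline{\mathrm{A}_\infty}$-labelling of morphisms is faithful. Together with a lifting lemma, this identifies morphisms with index bijections over $J$ (Corollary~\ref{equiv_type_indx}). Freeness is then proved separately, following Piceghello: the tensor product and braiding (via the partial braiding $Q$) are built recursively and compared with the generators-and-relations free symmetric monoidal category.

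You instead \emph{define} $\mathrm{SList}(J)$ as the permutation category, which makes the comparison with $\mathrm{Core}(\mathrm{Fin}_{/J})$ essentially definitional. You use the Coxeter presentation on the other side, to show that interpreting words in adjacent transpositions as whiskered braidings in an arbitrary $\mathsf{D}$ is well defined. Use it as a monoid presentation (cf.\ Lemma~\ref{group-vs-monoid}) and generalize over the source list in the induction on the congruence. Since every word acts on every list of length $n$, you need no analogue of the paper's realization lemma.

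Both routes rest on the same combinatorial input, but yours pays in Step~1 for what the recursive presentation gives the paper. You must check that $\Phi$ turns block sums into tensor products. More substantially, you must check that $\Phi$ sends the block swap $\ell\otimes\ell'\to\ell'\otimes\ell$ to $\beta_{\Phi(\ell),\Phi(\ell')}$ up to the structure isomorphisms. This is an induction through iterated hexagons that your proposal does not mention. It is the counterpart of what the paper calls its most computational step: braiding compatibility of the functor $\mathrm{SList}(J)\to$ free symmetric monoidal category.

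Your ``full faithfulness of restriction'' also needs an existence half, not only the uniqueness you argue: a monoidal transformation extending given components on singletons must exist. Naturality holds because permutations are generated by whiskered elementary braidings, which braided functors preserve. In a formal setting, your route also reintroduces choices of words and transports along propositional equalities of target lists, which the paper's design is meant to avoid (Remark~\ref{eq_objects}).

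Finally, the insertion-cycle fallback is viable but does not remove the combinatorics. Those cycles are exactly the coset representatives of $\Sigma_n$ in $\Sigma_{n+1}$ used in a standard proof of the Coxeter presentation, so you would be redoing that proof inside $\mathsf{D}$.
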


S. Piceghello previously formalized part of the coherence theorem in the setting of Homotopy Type Theory in \cite{Piceghello} by identifying the free symmetric monoidal category on a type with the groupoid of symmetric lists,
but the final link between morphisms of symmetric lists and permutation groups is only partially formalized in his work.
We contribute
the necessary adaptations and translations of Piceghello's work from the Homotopy Type Theory framework into the non-HoTT
framework of categories in Lean/\mathlib, and our main contribution here is the formalized identification of morphisms of
symmetric lists with permutations via labellings of their morphisms by elements of suitable Coxeter groups
(Theorem~\ref{w_faithful} and Corollary~\ref{equiv_type_indx}).

We also believe our work is of interest in explaining \emph{how} to precisely leverage the coherence theorem to obtain
Theorem~\ref{main_th}, and clarifying this process.
The general ideas are certainly known to the mathematical community,
but the actual details are more often than not left to the reader.

Our formalization is available online\footnote{\href{https://github.com/robin-carlier/SymmMonCoherence/tree/\refCommitRepo}{\texttt{https://github.com/robin-carlier/SymmMonCoherence/tree/\refCommitRepo}}}. The full repository is about 12.5kLoC, excluding comments. Our code compiles using the lean toolchain \texttt{leanprover/lean4:v4.28.0}, and depends on \mathlib's commit
\texttt{8f9d9cff6bd728b17a24e163c9402775d9e6a365}.

\subsubsection*{Outline}
In Section~\ref{coh-th-section}, we focus on the coherence theorem~\ref{coherence_theorem}. We first introduce the category
of symmetric lists in \ref{pres-slist}, adapting the work in \cite{Piceghello} to the setting of Lean/\mathlib. This category will play a central role in the rest of the paper as a convenient model of the free
symmetric monoidal category on a set. In~\ref{charac-morphisms} we study morphisms in the category of symmetric lists: we first briefly
recall and formalize the link between Coxeter groups of type \(\mathrm{A}_n\) and permutations in~\ref{cox-groups-subsubsection}
and make use of this material in~\ref{subsubsection-morph-slist-perm} to formalize that morphisms of symmetric lists are in
one-to-one correspondence with permutations by showing that they can be faithfully labeled as elements of a Coxeter group of type
\( \mathrm{A}_\infty \) (Theorem~\ref{w_faithful}).
In~\ref{slist-vs-fsmc}, we follow Piceghello's method to prove that symmetric lists are indeed free symmetric monoidal categories,
which completes the proof of Theorem~\ref{coherence_theorem}.

In Section~\ref{section-pseudo-out-of-pith}, we recall the basics of bicategories of spans and explain our formalization of them. Then,
we introduce in Definition~\ref{def-pbc} the precise structure needed to define pseudofunctors out of the
pith of the span bicategory of a category with pullbacks.

In Section~\ref{section-packaging} we construct the necessary data to build the pseudofunctor of Theorem~\ref{main_th} using
the machinery from Section~\ref{section-pseudo-out-of-pith}. In ~\ref{subsection-kleisli-bicat}, we first abstract the target bicategory from
\( \mathrm{Cat} \) to a suitable Kleisli bicategory \( \mathcal{K}_{\mathrm{SList}}^{\mathrm{op}} \) for the theory of symmetric monoidal categories and we
show that one can interpret a symmetric monoidal category as a \( \mathrm{Cat} \)-valued
pseudofunctor from the opposite of the Kleisli bicategory (Proposition~\ref{exists-pseudo-kleisli}).
In ~\ref{subsubsection-kleisi-multiset}, we explain how to leverage the coherence theorem~\ref{coherence_theorem}
to show that in good cases, computations in the Kleisli bicategory behave like matrix computations with multisets. Finally, we build a pseudofunctor \(\Lambda: \mathrm{Pith}\left(\mathrm{Span}\left({\mathrm{Fin}}\right)\right) \to
\mathcal{K}_{\mathrm{SList}}^{\mathrm{op}}\) in Proposition~\ref{pbc-system-propdef}, finalizing the construction of the pseudofunctor described in Theorem~\ref{main_th}.

\section{The coherence theorem}\label{coh-th-section}

\subsection{Symmetric lists: presenting a category via generators and relations}\label{pres-slist}

The coherence theorem for (non-symmetric) monoidal categories was already formalized in \mathlib
by Markus Himmel,
following a classical proof by normalization due to Beylin and Dybjer \cite{beylin1996} that was suited for type-theory
based proof assistants and originally formalized in the proof assistant ALF.

One of the key ideas of this proof is to make the statement of the theorem a statement about free monoidal categories on
types: such free monoidal categories are groupoids and have at most one morphism between any pair of objects.
By construction, the morphisms in the free monoidal category on a type \(\mathsf{C}\) are exactly those one can build
out of associators, left or right unitors and tensor products of other morphisms. The classical statement that
``every diagram commutes'' is thus encoded by this statement. To prove this property of morphisms in free monoidal
categories, one normalizes objects in the free monoidal category on a type \(T\) to essentially show that it
is equivalent to the discrete category on the set of lists.

The case of symmetric monoidal categories is harder as one cannot hope for the free symmetric monoidal category to have
at most one morphism between objects: there are symmetric monoidal categories \(\mathsf{C}\) where the identity
\(\beta_{c, c}^{\mathsf{C}} = \mathrm{Id}_{c \otimes c}\) does not hold, despite the morphisms having the same source and
target.
It remains reasonable to expect that free symmetric monoidal categories can have their
associative and unital parts normalized into a list-like structure (as one can do so after forgetting the braidings),
but the presence of braiding isomorphisms means that the resulting category of normal objects will not be discrete
and will yield a non-trivial category structure on lists.
Piceghello presented this category structure \cite[Def. 4.11]{Piceghello} as symmetric lists, a direct categorification of
multisets, which we describe here:
\begin{definition}{\cite[Def. 4.11]{Piceghello}}\label{def-slist-informal}
  Let \( J \) be a set. The category of \emph{symmetric lists} on \(J\), denoted \( \mathrm{SList}\left(J\right) \)
    is defined as the category presented by the following generators and relations:
  \begin{itemize}
    \item Objects of \(\mathrm{SList}\left(J\right)\) are lists of elements of \(J\).
    \item Morphisms are generated by the following (inductive) rules: \begin{enumerate}
      \item For all \( a, b \in J \) and \(l : \mathrm{SList}\left(J\right) \), there is a
        morphism \[ \mathrm{sw}_{a,b,l} : (a \dblcolon b \dblcolon l) \to (b \dblcolon a \dblcolon l). \]
      \item If \(f : a \to b \) is a morphism, there is a morphism
        \((x \dblcolon_m f) : (x \dblcolon a) \to (x \dblcolon b).\)
      \end{enumerate}
    \item Morphisms are subject to the following relations: \begin{enumerate}
        \item The constructions \(f \mapsto x \dblcolon_m f\) respect compositions and identities.
        \item The morphism \(\mathrm{sw}_{a,b,l}\) is natural in \(l\)
            (when interpreting \(x \mapsto (x \dblcolon -)\) as a functor using the previous relation).
        \item The symmetry relation
            \(\mathrm{sw}_{b,a,l}\mathrm{sw}_{a,b,l} = \mathrm{Id}_{a \dblcolon b \dblcolon l}\) holds.
        \item The diagram
        \[\begin{tikzcd}
                & {b \dblcolon a \dblcolon c \dblcolon l} && {b\dblcolon c \dblcolon a \dblcolon l} & \\
                {a\dblcolon b \dblcolon c \dblcolon l} &&&& {c\dblcolon b \dblcolon a \dblcolon l} \\
                & {a\dblcolon c \dblcolon b \dblcolon l} && {c \dblcolon a \dblcolon b \dblcolon l}
                \arrow["{b \dblcolon_m \mathrm{sw}_{a,c,l}}", from=1-2, to=1-4]
                \arrow["{\mathrm{sw}_{b, c, a\dblcolon l}}", from=1-4, to=2-5]
                \arrow["{\mathrm{sw}_{a, b, c\dblcolon l}}", from=2-1, to=1-2]
                \arrow["{a \dblcolon_m \mathrm{sw}_{b,c,l}}"', from=2-1, to=3-2]
                \arrow["{\mathrm{sw}_{a,c,b \dblcolon l}}"', from=3-2, to=3-4]
                \arrow["{c \dblcolon_m \mathrm{sw}_{a,b,l}}"', from=3-4, to=2-5]
        \end{tikzcd}\]
        commutes for every \(a, b, c \in J\) and \(l \in \mathrm{SList}(J)\).
      \end{enumerate}
  \end{itemize}
\end{definition}

In the setting of Homotopy Type Theory, in which \cite[Def. 4.11]{Piceghello} takes place,
this definition can be realized at once as a (1-truncated) higher inductive type.
In the setting of \mathlib, we cannot make such a definition directly, and instead have to
manually translate and interpret the 1-truncated HoTT definition in the model of groupoids provided by \mathlib.

Therefore, formalizing the definition above requires several intermediate steps, which together constitute
the standard construction of a category presented via generators and relations. In the infrastructure provided by
\mathlib, the steps take the following form
\begin{enumerate}
  \item Define a type \lean{V} of generators for the objects, and introduces a \lean{Quiver} instance on this type of generators, endowing it with generating arrows.
  \item Take the path category \lean{Paths V} on the quiver from the previous step, giving a free category where arrows are formal compositions (paths) of generating morphisms.
  \item Define a relation \lean{homRel} between morphisms in \lean{Paths V} corresponding to the relations in the final category.
    In \mathlib, the type of such relations is \lean{HomRel (Paths V)}.
  \item Take the quotient category of \lean{Paths V} by the relation \lean{homRel} from the previous point.
\end{enumerate}
We carry out this program in our case:
\begin{leancode}
inductive SListQuiv (C : Type u) where
  | nil : SListQuiv C
  | cons (head : C) (tail : SListQuiv C) : SListQuiv C

infixr:67 " ::… " => SListQuiv.cons

inductive Hom {C : Type u} : SListQuiv C → SListQuiv C → Type u
  | swap (x y : C) (l : SListQuiv C) :
      Hom (x ::… (y ::… l)) (y ::… (x ::… l))
  | cons (z : C) {l l' : SListQuiv C} :
      Hom l l' → Hom (z ::… l) (z ::… l')

instance : Quiver (SListQuiv C) where Hom := Hom
structure FreeSListQuiv where p : Paths (SListQuiv C)
instance : Category (FreeSListQuiv C) :=
  inferInstanceAs (Category <| InducedCategory _ FreeSListQuiv.p)
\end{leancode}
In the HoTT setting, the constructor \lean{Hom.cons} is implicit: the fact that it is a function
in HoTT automatically gives the constructor \lean{cons} an action on paths.
When interpreting this definition in a non-HoTT setting,
extra constructors have to be added, and it has to be extended manually to an
endofunctor on the paths category in order to represent its action on paths.
In the code block below, \lean{ι C} is the inclusion prefunctor from \lean{SListQuiv C}
to \lean{FreeSListQuiv C}, \lean{β₁_} is a notation for the generating swap morphism,
and \lean{::_} (resp. \lean{::_ₘ}) is the action on objects (resp. morphisms) of the extension to
\lean{FreeSListQuiv C} of the \lean{cons} constructor.
\begin{leancode}
inductive HomEquiv : HomRel (FreeSListQuiv C)
  | swap_naturality (X Y : C) {l l' : SListQuiv C} (f : l ⟶ l') :
      HomEquiv
        (β₁_ X Y ((ι C).obj l) ≫
          (Y ::_ₘ (X ::_ₘ ((ι C).map f))))
        ((X ::_ₘ (Y ::_ₘ ((ι C).map f))) ≫
          (β₁_ X Y ((ι C).obj l')))
  | swap_swap (X Y : C) (l : FreeSListQuiv C) :
      HomEquiv (β₁_ X Y l ≫ β₁_ Y X l) (𝟙 _)
  | triple (X Y Z : C) (l : FreeSListQuiv C) :
      HomEquiv
        (β₁_ X Y (Z ::_ l) ≫ (Y ::_ₘ (β₁_ X Z l)) ≫ β₁_ Y Z (X ::_ l))
        ((X ::_ₘ (β₁_ Y Z l)) ≫ β₁_ X Z (Y ::_ l) ≫ Z ::_ₘ (β₁_ X Y l))
  | cons (X : C) {l l' : FreeSListQuiv C} (f f' : l ⟶ l') :
      HomEquiv f f' → HomEquiv (X ::_ₘ f) (X ::_ₘ f')

def SList := CategoryTheory.Quotient (FreeSListQuiv.HomEquiv C)
  deriving Category
\end{leancode}
Again, due to the non-HoTT nature of \mathlib's framework, extra relations have to be added in order to make
the definition sound: \lean{HomEquiv.swap_naturality} and \lean{HomEquiv.cons} are not explicitly present in the
HoTT presentation, as higher inductive types in HoTT ensure that all constructors are natural and functorial with
respect to paths.
\begin{remark}
  Besides the adaptations from the HoTT setting to the interpretation in the non-HoTT setting of Lean, there are
  two minor differences between our construction and Piceghello's.
  \begin{enumerate}
    \item If we were to strictly interpret everything in the ``groupoid model'' provided by \mathlib's groupoids,
      we would need to replace the free category on \lean{SListQuiv}
      by the free groupoid on \lean{SListQuiv} to ensure that we stay within the realm of
      groupoids. This would add extra generators for formal inverses of the generating morphisms, as well as extra relations
      ensuring that the formal inverses define inverses in the quotient category.
      In the case of symmetric lists, it is easy to show that the category resulting from our definitions is a
      groupoid, and so we prefer the more direct construction in the setting of categories.
    \item Our definition is less general:
      if we were to strictly interpret the construction in \cite{Piceghello} in the groupoid
      model, we would be building symmetric lists on a groupoid,
      rather than symmetric lists on a set (which would correspond to the case of a discrete groupoid).
      We would then need extra relations ensuring naturality of the generating morphisms with respect to every parameter
      in \( C \), making the presentation of the category and the study of its morphisms more complex.
      For the purposes of stating our version of the coherence theorem and unbiasing symmetric monoidal categories,
      this extra generality is not needed, so we do not implement it.
  \end{enumerate}
\end{remark}
A version of the universal property of the category of symmetric lists as a category presented by generators and relations can
be spelled out explicitly as the following:
\begin{lemma}\label{up-slist}
  Let \(\mathsf{D}\) be a category.
  \begin{enumerate}[label=(\roman*)]
    \item
        Suppose given the following data:
        \begin{itemize}
          \item An object \(F_{\mathrm{nil}} \in \mathsf{D}\).
          \item For every \(c \in C\), an endofunctor \(\gamma^{F}_{c} : \mathsf{D} \to \mathsf{D}\).
          \item For all \(a, b \in C\), a natural isomorphism
            \[\tau^{F}_{a,b} : \gamma^{F}_a \gamma^{F}_b \iso \gamma^F_{b}\gamma^{F}_a.\]
        \end{itemize}
        Assume the data satisfies the following conditions:
        \begin{itemize}
          \item For all \(a, b \in C\), the equality \(\tau^{F}_{a,b}\tau^{F}_{b,a} = \mathrm{Id}\) holds.
          \item For all \(a, b, c \in C\), the diagram
            \[\begin{tikzcd}[cramped]
              & {\gamma^{F}_b\gamma^F_a\gamma^F_c} & {\gamma^{F}_b\gamma^F_c\gamma^F_a} & \\
              {\gamma^{F}_a\gamma^F_b\gamma^F_c} &&& {\gamma^{F}_c\gamma^F_b\gamma^F_a} \\
              & {\gamma^F_a\gamma^F_c\gamma^F_b} & {\gamma^F_c\gamma^F_a\gamma^F_b}
              \arrow["{\gamma^F_b \cdot \tau_{a,c}^F}", from=1-2, to=1-3]
              \arrow["{\tau_{b,c}^F \cdot \gamma^F_a}", from=1-3, to=2-4]
              \arrow["{\tau_{a,b}^F \cdot \gamma^F_c}", from=2-1, to=1-2]
              \arrow["{\gamma^F_a \cdot \tau_{b,c}^F}"', from=2-1, to=3-2]
              \arrow["{\tau_{a,c}^F\cdot\gamma^F_b}"', from=3-2, to=3-3]
              \arrow["{\gamma_c^F\cdot\tau^F_{a,b}}"', from=3-3, to=2-4]
            \end{tikzcd}\]
            commutes.
        \end{itemize}
        This data defines a functor \(F : \mathrm{SList}(C) \to \mathsf{D}\) equipped with isomorphisms \begin{align*}
          \begin{array}{lcl} \upsilon_{\mathrm{nil},F} :&  F([\ ]) &\simeq F_{\mathrm{nil}},\\
          \upsilon_{\mathrm{cons}, F, c} :& F \circ (c \dblcolon \cdot ) &\simeq \gamma^F_c \circ F,\end{array}
        \end{align*}
        and such that
        \(F(\mathrm{sw}_{a,b,l})\) is identified with \(\left(\tau^F_{a,b}\right)_{F(l)}\) through these isomorphisms.
    \item Let \(F, G : \mathrm{SList}(C) \to \mathsf{D}\) be functors. Suppose we are given the following data:
      \begin{itemize}
        \item A morphism \(\phi_{\mathrm{nil}} : F([\ ]) \to G([\ ])\).
        \item For every \(c \in C\), \(l \in \mathrm{SList}(C)\) and morphism \(\mathrm{ind} : F(l) \to G(l) \), a morphism
          \(\phi_{c,l}(\mathrm{ind}) : F(c\dblcolon l) \to G(c\dblcolon l)\).
      \end{itemize}
      Assume further that the following conditions hold:
      \begin{itemize}
        \item For all \(a, b \in C\), for all \(l \in \mathrm{SList}(C)\) and for all morphisms \(\mathrm{ind} : F(l) \to G(l)\), the diagram
          \[\begin{tikzcd}[cramped]
            {F(a\dblcolon b\dblcolon l)} && {F(b\dblcolon a\dblcolon l)} \\
            {G(a\dblcolon b\dblcolon l)} && {G(b\dblcolon a\dblcolon l)}
            \arrow["{F(\mathrm{sw}_{a,b,l})}", from=1-1, to=1-3]
            \arrow["{\phi_{a,b\dblcolon l}(\phi_{b, l}(\mathrm{ind}))}"', from=1-1, to=2-1]
            \arrow["{\phi_{b,a\dblcolon l}(\phi_{a, l}(\mathrm{ind}))}", from=1-3, to=2-3]
            \arrow["{G(\mathrm{sw}_{a,b,l})}"', from=2-1, to=2-3]
          \end{tikzcd}\]
        commutes.
        \item Given \(c \in C\), a morphism \(f : l \to l'\) in \(\mathrm{SList}(C)\) and morphisms
          \(\mathrm{ind}_l : F(l) \to G(l)\),
          \(\mathrm{ind}_{l'} : F(l') \to G(l')\) satisfying \(\mathrm{ind}_{l'} \circ F(f) = G(f) \circ \mathrm{ind}_{l}\),
          the diagram
          \[\begin{tikzcd}[cramped]
                  {F(c\dblcolon l)} && {F(c\dblcolon l')} \\
                  {G(c\dblcolon l)} && {G(c\dblcolon l')}
                  \arrow["{F(c\dblcolon_m f)}", from=1-1, to=1-3]
                  \arrow["{\phi_{c,l}(\mathrm{ind}_l)}"', from=1-1, to=2-1]
                  \arrow["{\phi_{c,l'}(\mathrm{ind}_{l'})}", from=1-3, to=2-3]
                  \arrow["{G(c\dblcolon_m f)}"', from=2-1, to=2-3]
          \end{tikzcd}\]
          commutes.
      \end{itemize}
      The data defines a unique natural transformation \(\phi : F \to G\) such that
        \(\phi_{[\ ]} = \phi_{\mathrm{nil}}\) and such that \(\phi_{c\dblcolon l} = \phi_{c,l}(\phi_l)\).
  \end{enumerate}
\end{lemma}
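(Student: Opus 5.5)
The plan is to follow the four-step construction of \(\mathrm{SList}(C)\) (quiver, path category, relation, quotient) and use at each layer the universal property that \mathlib already provides: prefunctors out of a quiver extend uniquely to functors out of \lean{Paths}, and functors out of \lean{Paths} that respect \lean{HomEquiv} descend uniquely to \lean{CategoryTheory.Quotient}.

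For part (i), first define the object map \(F_0\) on \lean{SListQuiv C} by structural recursion: \(F_0(\mathrm{nil}) = F_{\mathrm{nil}}\) and \(F_0(c \dblcolon l) = \gamma^F_c(F_0(l))\). The action on generating arrows is defined by recursion on the inductive family \lean{Hom}. A \lean{swap x y l} is sent to \((\tau^F_{x,y})_{F_0(l)}\); because \(F_0\) unfolds definitionally, the source \(\gamma_x\gamma_y F_0(l)\) and target \(\gamma_y\gamma_x F_0(l)\) match. A \lean{cons z f} is sent to \(\gamma^F_z\) applied to the image of \(f\). This gives a prefunctor, hence a functor on \lean{FreeSListQuiv C}.

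The second step is to check that this functor respects each constructor of \lean{HomEquiv}, by induction on the relation:
\begin{itemize}
\item \lean{swap_naturality} is the naturality of \(\tau^F_{x,y}\) at the image of \(f\);
\item \lean{swap_swap} is the hypothesis \(\tau_{a,b}\tau_{b,a}=\mathrm{Id}\), taken in the correct order;
\item \lean{triple} is the hexagon hypothesis evaluated at \(F(l)\);
\item \lean{cons} follows from functoriality of \(\gamma^F_X\) together with the induction hypothesis.
\end{itemize}
One subtlety is that \lean{HomEquiv} quantifies over objects \(l\) of the path category and uses the extended \(::_m\). I therefore first need a lemma that the free-category functor intertwines the extension of \lean{cons} with \(\gamma^F_c\). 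This is proved by induction on paths, using that \(\gamma^F_c\) preserves composition and identities.

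After passing to the quotient, the isomorphisms \(\upsilon_{\mathrm{nil}}\) and \(\upsilon_{\mathrm{cons},c}\) can be taken to be identities, since they hold definitionally on objects. Naturality of \(\upsilon_{\mathrm{cons},c}\) reduces, via surjectivity of the quotient functor and induction on paths, to the intertwining lemma above. The identification of \(F(\mathrm{sw}_{a,b,l})\) with \(\tau\) then holds by definition.

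For part (ii), define \(\phi_l\) by recursion on the underlying list: \(\phi_{[\,]}=\phi_{\mathrm{nil}}\) and \(\phi_{c\dblcolon l}=\phi_{c,l}(\phi_l)\). Uniqueness is immediate by induction on lists. For naturality, it suffices to check naturality against images of generating arrows, because every morphism of the quotient is the image of a path, and naturality is stable under composition and identities. I would prove naturality against a generating arrow \(g : l \to l'\) by induction on \lean{Hom}:
\begin{itemize}
\item the \lean{swap} case is exactly the first hypothesis with \(\mathrm{ind}=\phi_l\);
\item in the \lean{cons z g} case, the induction hypothesis provides \(\phi_{l'}F(g)=G(g)\phi_l\), which is precisely the premise the second hypothesis needs to conclude the square for \(z\dblcolon_m g\).
\end{itemize}
I expect part (ii) to be straightforward.

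The main obstacle is part (i): managing the coherence between the extended \(::_m\) on \lean{FreeSListQuiv}/\lean{SList} and \(\gamma^F_c\). This includes the dependent-type bookkeeping needed for the recursion on \lean{Hom} to typecheck, and the proof that the induced functor sends \(c \dblcolon_m f\) to \(\gamma^F_c(F f)\) for arbitrary, not merely generating, morphisms \(f\). My first approach would be to prove this statement at the level of paths by induction, and only then descend it through the quotient.
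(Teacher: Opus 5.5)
Your proposal is correct and follows essentially the paper's approach. The lemma is the universal property of \(\mathrm{SList}(C)\) as a category presented by generators and relations: a prefunctor on \texttt{SListQuiv}, extended to paths, then descended through the quotient after checking the \texttt{HomEquiv} constructors, with \(\upsilon_{\mathrm{nil}}\) and \(\upsilon_{\mathrm{cons}}\) componentwise identities as in Remark~\ref{eq_objects}, and with part (ii) defined by recursion on lists and checked on generating arrows. The one step you leave implicit is in the \texttt{cons} case of part (ii): you need that the image of the generator \(\mathrm{cons}\ z\ g\) in \(\mathrm{SList}(C)\) equals \(z \dblcolon_m\) applied to the image of \(g\), which is the generator-level case of your intertwining lemma.
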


We implement the first point of the above lemma using a dedicated structure \\
\smcLinkRange{SymmMonCoherence/SList/Basic.lean}{1130}{1151}{\lean{RecursiveFunctorData}} encapsulating the data defined in the first point, giving rise to a declaration
\smcLinkRange{SymmMonCoherence/SList/Basic.lean}{1213}{1231}{\lean{RecursiveFunctorData.functor}} providing the functor attached to the data.
The second point is implemented as a declaration \smcLinkRange{SymmMonCoherence/SList/Basic.lean}{1130}{1151}{\lean{recNatTrans}} taking
directly the necessary data as parameter.

\begin{remark}\label{eq_objects}
  Our statement of Lemma~\ref{up-slist} is bicategorical in the sense that it characterizes functors out of the category of symmetric lists up to a unique isomorphism. In fact, the data in the first point defines functors \emph{uniquely}, and the isomorphisms
  \(\upsilon_{\mathrm{nil}}\) and  \(\upsilon_{\mathrm{cons}}\) that characterize
the resulting functors can be (componentwise) definitional equalities. We intentionally avoid stating things this way, for reasons we explain
below.

A recurring theme when formalizing category theory in the setting of a dependent type theory like Lean is
  that, generally speaking, equalities of objects of categories should be avoided when possible.
One of the reasons is that types of morphisms, and hence functions like composition of morphisms, actively depend
on objects. Given a category \lean{C} and terms \lean{x, y, z : C}, in the presence of an equality \lean{h : x = y} in context,
a morphism \lean{f : x ⟶ z} will not directly type check as a morphism \lean{y ⟶ z} and a casting operation has to be performed on \lean{f}.
When the equality \lean{h} is an equality of free variables in contexts, performing a cast is easily done.
If \lean{h} is a more complex expression, direct substitution is usually not possible, and, while it is \emph{theoretically} possible
to use the induction principle on equalities with carefully crafted induction motives to perform substitutions, it is in practice extremely
tedious to do so, especially since this has to be done repeatedly in every proof where the situation arises.
Situations like these are colloquially referred to as ``DTT Hell'' within the Lean community.

To make the situation with equality of objects slightly more manageable, \mathlib made the choice to provide casts only for identity morphisms, these are
called \lean{eqToHom}: given \lean{h : x = y}, \lean{eqToHom h : x ⟶ y} is the equality \lean{h} as a morphism. Usage of \lean{eqToHom} as
a correction term when composing morphisms whose sources and targets only match up to propositional equality is a standard method in \mathlib,
but it only partially alleviates the inherent trouble of working with equalities of objects and it is still considered better practice
to try to not end up in situations where these are needed in the first place.

When an equality of objects \lean{h : x = y} is \emph{definitional}, the situation is better and the type checker will accept
\lean{f : x ⟶ z} as a valid morphism \lean{y ⟶ z}, but there remains an important technical subtlety:
in Lean, for performance reasons, checks for definitional equalities depend on a setting called \emph{transparency}, which
controls how definitions are allowed to be unfolded when checking for definitional equality of terms.
Most of the automation tactics in lean (\lean{simp}, \lean{grind}, \lean{rw}, etc.,) work at the ``reducible'' transparency level, where
most definitions are \emph{not} unfolded. If a definitional equality \lean{h : c = c'} does not hold at reducible
transparency (for instance, if it is gated behind a \lean{def}), tactics may be unable to use terms that depends on it. Over-reliance
on non-reducible definitional equalities is often colloquially referred to as ``defeq abuse'' within the Lean community.

In our situation, the definitional equalities involving \lean{RecursiveFunctorData.functor} may hold, but not at reducible transparency and
relying on them could cause problems when trying to automate proofs and computations involving such recursively-defined functors.
Hence, we take the opinionated route of considering that these definitional equalities are an
implementation detail, and reflect this in our statement of Lemma~\ref{up-slist}.

In practice, in our implementation, we leverage the Lean module system introduced in
\texttt{v4.26.0} to make it so that the body of \lean{RecursiveFunctorData.functor} is not ``exposed'': this ensures that
no declaration in the ``public'' scope can use the definitional equalities that underlie \(\upsilon_{\mathrm{nil}}\) and the components of \(\upsilon_{\mathrm{cons}}\).
We still export lemmas about existence of equalities in the public scope, as they can provide a convenient shortcut to show that some diagrams made exclusively
of morphisms of the form \(\upsilon_{\mathrm{nil}}\) and  \(\upsilon_{\mathrm{cons}}\) commute,
but their definitional nature is sealed away, making it impossible to abuse.
\end{remark}

\subsection{Studying morphisms of symmetric lists}\label{charac-morphisms}

The key point in Mac Lane’s original proof of the coherence theorem is the idea that the hexagon and symmetry relations
correspond to the relations that present the symmetric group on \(n + 1\) letters \(\Sigma_{n + 1}\) as a
Coxeter group of type \(\mathrm{A}_n\): this presentation is the isomorphism
\[
  \left\langle \left(s_{i}\right)_{1 \le i \le n}\quad \left|
    \begin{array}{clcl}
      s_i^2 &=& e & \\
      s_is_{i+1}s_i &=& s_{i+1}s_is_{i+1} & \\
      s_i s_j &=& s_js_i & \mathrm{if}\ |i - j| > 1
    \end{array}\right. \right\rangle \cong \Sigma_{n + 1}
\]
that sends the generator \(s_i\) to the permutation \(\left(i\ i + 1\right)\) (once the set of letters has been
enumerated). One can indeed see a clear link
between this presentation and our categories of symmetric lists: given a symmetric list
\(\left[x_0, \ldots, x_n\right]\), we could label \(s_0\) a morphism of the form \[
  \mathrm{sw}_{x, y, \left[x_2, \ldots, x_n\right]} : \left[x, y, \ldots, x_n\right] \to
  \left[y, x, \ldots, x_n \right] \]
which conveniently happens to realize the permutation \((0\ 1)\) on the indices of the list, and the symmetry
relation would then read \(s_0^2 = 1\).
We could also label \(s_1\) any morphism of the form
\(x \dblcolon \mathrm{sw}_{y, z, \left[x_3, \ldots, x_n \right]} \), and the hexagon relation would then conveniently
read as \( s_0 s_1 s_0 = s_1 s_0 s_1 \). Finally, labelling e.g., \(s_2\) a morphism of the form
\(x \dblcolon y \dblcolon \mathrm{sw}_{z, t, \left[x_4, \ldots, x_n \right]}\), the relation on morphisms of
symmetric lists that asserts naturality of the swap would then read as the equation \(s_0s_2 = s_2s_0\).

The procedure described above provides the core idea for establishing a correspondence between morphisms of
symmetric lists and permutations. However, this idea must be refined before it becomes formal enough for a proof assistant to accept.

\subsubsection{Coxeter groups of type \( \mathrm{A}_n \) and permutation groups}\label{cox-groups-subsubsection}

Before even attempting to formalize the above procedure, the link between Coxeter groups of type \(\mathrm{A}_n\) and
permutations of the set \(\{0,\ldots, n\}\) needs to be formalized. The library \mathlib
defines a \mathlibLinkRange{Mathlib/GroupTheory/Coxeter/Basic.lean}{155}{160}{Coxeter system} as the structure of an
isomorphism between a group and the group presented by generators and relations from a Coxeter matrix.
For \mathlib, a \mathlibLinkRange{Mathlib/GroupTheory/Coxeter/Matrix.lean}{66}{74}{Coxeter matrix} is a
(possibly infinite) square matrix \(M\) on a set \( I \) with positive integer coefficients, that is symmetric,
such that all diagonal coefficients are one and such that all off-diagonal coefficients are not equal to one.

Every Coxeter matrix \( M \) on a set \( I \) defines a group \( \overline{M} \) by quotienting the
free group on \( I \) by the \emph{Coxeter relations}
\(\left((s_i s_j)^{M_{ij}} = e\right)_{i,j \in I^2}\),
where \(s_i\) is the generator corresponding to \(i \in I\).

In this work, we will mainly be working with the Coxeter matrix \(\mathrm{A}_n\) on the set \(\{0,\cdots,n-1\}\), represented by
\[
\mathrm{A}_n = \begin{pmatrix}
1 & 3 & 2 & \cdots & 2 \\
3 & 1 & 3 & \ddots & \vdots \\
2 & 3 & 1 & \ddots & 2 \\
\vdots & \ddots & \ddots & \ddots & 3 \\
2 & \cdots & 2 & 3 & 1
\end{pmatrix}
\]
as well as with its infinite variant \(\mathrm{A}_\infty\) (as a Coxeter matrix on the set \(\mathbb{N}\)).

At the time of writing, despite some theory on abstract Coxeter systems, \mathlib does not produce any non-trivial term of type
\lean{CoxeterSystem G} (non-trivial meaning here that \( G \) is not definitionally the group presented by a Coxeter
matrix).
Following the textbook proof of \cite[\S 1.5]{bjorner2005}, we formalize a (classical) criterion to determine
when a given set of degree 2 generators in a group \( G \)
satisfying correct relations extends to a presentation of \( G \) as a Coxeter group: we
introduce a \smcLinkRange{SymmMonCoherence/CoxeterGroupRecognition.lean}{38}{43}{structure}
\begin{leancode}
/- Below, M.simple i is the element of the Coxeter group attached to M that
corresponds to the generator `i`. -/
structure PreCoxeterSystem {B : Type*} (M : CoxeterMatrix B) (G : Type*) [Group G] where
  hom : M.Group →* G
  surjective_hom : Function.Surjective hom
  orderOf_eq (i j : B) : M i j = orderOf (hom (M.simple i) * hom (M.simple j))
  hom_simple_ne_one (i : B) :  hom (M.simple i) ≠ 1
\end{leancode}
encoding a set of degree \( 2 \) generators in \( G \) satisfying suitable equations as a surjective group homomorphism
from the corresponding Coxeter group to \( G \).
The criterion is then called the \emph{exchange property}~\cite[p. 18]{bjorner2005}, which we explain with Lean
code in the listing below:
\begin{leancode}
variable {B : Type*} (M : CoxeterMatrix B) (G : Type*) [Group G] (S : PreCoxeterSystem M G)
/-- The length of an element `g` is the minimal length (in the sense of Coxeter systems)
of the preimages of `g` in `M.Group`. This corresponds to the minimal length of
a word of generators needed to express `g`. -/
noncomputable def length (g : G) : ℕ :=
  Nat.find <| show (M.toCoxeterSystem.length '' (S.hom⁻¹' {g})).Nonempty by …

local prefix:100 "ℓ " => S.length

/-- For ω a list of generators, π ω is the product in the Coxeter group attached to M
of the word ω. -/
local prefix:100 "π " => M.toCoxeterSystem.wordProd

/-- For ω a list of generators, φ ω is the product in G of the generators, via
the morphism S.hom :  -/
local notation "φ " x:max => S.hom (π x)

/-- A word is reduced if its length in the Coxeter group M.Group
(which is the minimal number of generators required to express it)
is equal to its length in G. -/
abbrev IsReduced (ω : List B) : Prop := ℓ (S.hom <| π ω ) = ω.length

/-- The "exchange property" for a pre-Coxeter system `S`: if a word of generators
`w = s₁⋯sₖ` in `G` is reduced and reduces further when multiplying on the left by a
generator `s`, then there exists `1 ≤ i ≤ k` such that
`sw = s₁⋯ŝᵢ⋯sₖ`. -/
def ExchangeProperty : Prop :=
  ∀ (ω : List B) (_ : S.IsReduced ω) (b : B) (_ : ℓ (φ (b :: ω)) ≤ ω.length),
    ∃ (i : ℕ) (_ : i < ω.length), φ (b :: ω) = φ (ω.eraseIdx i)
\end{leancode}
Following the exact same proof as in \cite[Thm. 1.5.1]{bjorner2005},
we can \smcLink{SymmMonCoherence/CoxeterGroupRecognition.lean}{763}{formalize} that the
group homomorphism \lean{S.hom} of a pre-Coxeter system \lean{S} that satisfies the exchange
property is injective. Our formalization follows closely loc. cit. and we refer the reader to our implementation for more details.

The criterion then applies to symmetric groups: we can define a pre-Coxeter system
\smcLinkRange{SymmMonCoherence/SymmetricGroupCoxeterSystem.lean}{81}{103}{\lean{Fin.preCoxeterSystem n}} on the group
\(\Sigma_{n + 1}\) for the matrix \(\mathrm{A}_n\) by sending the generator \( i \) to the permutation \((i\ i + 1)\)\footnote{
  In the pull request \mathlibPR{35218} to \mathlib, independent of our work, Kim Morrison defined that same morphism and showed its
surjectivity as well.}.
Following again the material from \cite[Thm. 1.5.1]{bjorner2005}, we can
\smcLinkRange{SymmMonCoherence/SymmetricGroupCoxeterSystem.lean}{277}{367}{formalize}
that \smcLinkRange{SymmMonCoherence/SymmetricGroupCoxeterSystem.lean}{81}{103}{\lean{Fin.preCoxeterSystem n}}
satisfies the exchange property for all \(n\). This involves relating the notion of length for this pre-Coxeter system with the
inversion count of a permutation.

Finally, we record a lemma that we will make use of in our applications
\begin{lemma}\label{group-vs-monoid}
  Let \( M \) be a Coxeter Matrix on a set \(I\), the group \(\overline{M}\) is also presented as a monoid by the Coxeter relations, i.e.,
  the kernel of the monoid homomorphism \(\mathrm{FreeMonoid}(I) \to \overline{M}\) is the smallest multiplicative
  congruence on \(\mathrm{FreeMonoid}(I)\) containing the words \(\left((s_i s_j)^{M_{ij}}\right)_{i, j}\).
\end{lemma}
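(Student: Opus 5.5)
The approach is to show that the congruence quotient \(Q \colonequals \mathrm{FreeMonoid}(I)/\!\sim\) is already a group. Here \(\sim\) denotes the smallest multiplicative congruence containing the pairs \(\bigl((s_is_j)^{M_{ij}}, e\bigr)\). Once \(Q\) is a group, it satisfies the universal property of the group presentation, and comparison maps in both directions identify it with \(\overline{M}\).

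\textbf{Step 1: comparison maps.} The canonical monoid map \(\pi : \mathrm{FreeMonoid}(I) \to \overline{M}\) sends each word \((s_is_j)^{M_{ij}}\) to \(e\). Hence \(\ker \pi\) is a congruence containing the generating pairs, so \({\sim} \subseteq \ker\pi\). Consequently \(\pi\) factors through a surjective monoid map \(\bar\pi : Q \to \overline{M}\).

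\textbf{Step 2: \(Q\) is a group.} Taking \(i=j\) and using \(M_{ii}=1\), the generating relation reads \(s_is_i \sim e\). So each class \([s_i]\) is its own two-sided inverse in \(Q\). Since \(Q\) is generated as a monoid by the \([s_i]\), every element \([s_{i_1}\cdots s_{i_k}]\) has the two-sided inverse \([s_{i_k}\cdots s_{i_1}]\). Thus \(Q\) is a group.

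\textbf{Step 3: a map back.} Because \(Q\) is a group, the assignment \(i \mapsto [s_i]\) extends to a group homomorphism from the free group \(\mathrm{FreeGroup}(I)\) to \(Q\). This homomorphism kills the relators \((s_is_j)^{M_{ij}}\), so it descends to \(\psi : \overline{M} \to Q\). Both composites \(\bar\pi\circ\psi\) and \(\psi\circ\bar\pi\) fix the generators. They are monoid maps out of objects generated by those elements, so both are identities. Hence \(\bar\pi\) is injective, which means \(\ker\pi = {\sim}\), as claimed.

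\textbf{Expected obstacle.} The mathematics is straightforward; the main obstacle is bookkeeping in Lean. One must build the group structure on the congruence quotient \((\mathrm{Con}\ \mathrm{FreeMonoid}\ I).\mathrm{Quotient}\). The inverse can be defined by lifting word reversal, \texttt{List.reverse}, after checking that reversal is compatible with \(\sim\). This compatibility follows by induction on the congruence generation, using that the reverse of \((s_is_j)^{m}\) is \((s_js_i)^{m}\) and that \(M_{ji} = M_{ij}\). One must also relate \(\overline{M}\), defined in \mathlib as a quotient of the free group, to lifts through \(\mathrm{PresentedGroup}\). I would construct the inverse via word reversal rather than via existence of inverses, so that it is computable and the inverse laws reduce to \(s_is_i \sim e\) by induction on words.
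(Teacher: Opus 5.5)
Your proposal is correct and follows essentially the paper's approach. The paper also reduces the lemma to the generators being involutions; you get this on the monoid side from the diagonal relation \(M_{ii}=1\) (so \(s_is_i \sim e\)), which makes the congruence quotient a group, and the comparison with the group presentation then goes through exactly as you describe.
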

The proof of this lemma reduces to the fact that \(s_i^{-1} = s_i\) in \( \overline{M} \) for every \(i\).

\subsubsection{Morphisms of symmetric lists and permutations}\label{subsubsection-morph-slist-perm}

We are now ready for the study of morphisms of symmetric lists.
One of the first things to notice is that the way we label morphisms forgets their source and target.
We abstract the idea of multiplicatively labelling morphisms of a category by elements of a monoid:
\begin{definition}\label{weight-def}
  Let \( \mathsf{C} \) be a category and \( M \) be a monoid. A \emph{\( M \)-weight} on \(\mathsf{C}\) is a functor
  from \(\mathsf{C}\) to the opposite\footnote{In \mathlib, compositions of morphisms in a category are reversed compared
    to the usual way it is written in pen-and-paper mathematics, i.e., it ``follows the arrows'';
    this is why an opposite is taken here.}
  of the category \(\mathrm{B}M\) that has a single object and the monoid \( M \) as endomorphisms of that single object.
\end{definition}
This definition provides an interface between morphisms in a category and elements of a monoid
in a way that turns composition into multiplication and that sends identities to the unit of the monoid.

The labeling of morphisms that we described in the opening paragraph of this section should be thought of as a
weight on the category of symmetric lists valued on a Coxeter group.
In fact, since we want to relate the relations that define symmetric lists with the relations that present
permutation groups as Coxeter groups, it is more convenient to first define weights at the level of free objects.
\begin{propdef}
  The inductive assignment \(w_0\) on arrows of \(\mathrm{SListQuiv}(C)\) characterized by the formulas
  \begin{align*}
    w_0(\mathrm{sw}_{a, b, l}) &= 0\\
    w_0(x \dblcolon_m f) &= 1 + w_0(f)
  \end{align*}
  extends to a weight \( w_0 \) on \(\mathrm{FreeSListQuiv}(C)\) with values in the free monoid on \(\mathbb{N}\).
  The weight \( w_0 \) descends to a weight \(w\) on \(\mathrm{SList}(C)\) with values in the Coxeter group
  \(\overline{\mathrm{A}_{\infty}}\) in a way such that the diagram
  \[\begin{tikzcd}
          {\mathrm{FreeSListQuiv}(C)} & {\left(\mathrm{B}\mathrm{FreeMonoid}(\mathbb{N})\right)^{\mathrm{op}}} \\
          {\mathrm{SList}(C)} & {\left(\mathrm{B}\overline{\mathrm{A}_{\infty}}\right)^{\mathrm{op}}}
          \arrow["{w_0}", from=1-1, to=1-2]
          \arrow["{\pi_{C}}"', from=1-1, to=2-1]
          \arrow[from=1-2, to=2-2]
          \arrow["w"', from=2-1, to=2-2]
  \end{tikzcd}\]
  commutes.
\end{propdef}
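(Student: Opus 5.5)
The first step is to use the universal property of path categories. The quiver arrows of \(\mathrm{SListQuiv}(C)\) form an inductive family, so \(w_0\) is well defined on generating arrows by structural recursion: \(w_0(\mathrm{sw}_{a,b,l}) = [0]\) and \(w_0(x \dblcolon_m f) = \sigma(w_0(f))\). Here \(\sigma : \mathrm{FreeMonoid}(\mathbb{N}) \to \mathrm{FreeMonoid}(\mathbb{N})\) is the monoid endomorphism induced by \(n \mapsto n+1\), and the single letter \(0\) is meant by the formula "\(0\)". On generators, "\(1 + w_0(f)\)" is read as \(\sigma\) applied to the word. A prefunctor from \(\mathrm{SListQuiv}(C)\) to the one-object quiver underlying \(\left(\mathrm{B}\mathrm{FreeMonoid}(\mathbb{N})\right)^{\mathrm{op}}\) then extends uniquely to a functor on \(\mathrm{Paths}\). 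This gives the weight \(w_0\) on \(\mathrm{FreeSListQuiv}(C)\).

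Next I record the key compatibility with the cons functor on the free category: \(w_0(x \dblcolon_m p) = \sigma(w_0(p))\) for every path \(p\). I prove it by induction on paths, using that \(x \dblcolon_m -\) is defined on paths by mapping generators with \(\mathrm{Hom.cons}\) and that \(\sigma\) is multiplicative. As a consequence, \(w_0(\beta_1 {}_{X,Y,l}) = [0]\) for every object \(l\) of \(\mathrm{FreeSListQuiv}(C)\), not just for the image of a quiver vertex.

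For the descent, I compose \(w_0\) with the quotient map \(\mathrm{FreeMonoid}(\mathbb{N}) \to \overline{\mathrm{A}_\infty}\), which sends \(n\) to \(s_n\). By the universal property of \(\mathrm{CategoryTheory.Quotient}\), it suffices to check that each generating relation of \(\mathrm{HomEquiv}\) is sent to an equality in \(\overline{\mathrm{A}_\infty}\); commutativity of the square is then definitional. The cases go as follows.
\begin{itemize}
\item \emph{swap\_swap.} Both sides reduce to \(s_0 s_0 = e\), which is the Coxeter relation with \(M_{00}=1\).
\item \emph{triple.} I compute both sides as \(s_0 s_1 s_0\) and \(s_1 s_0 s_1\). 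These are equal because \(M_{01}=3\) and the generators are involutions.
\item \emph{swap\_naturality.} With \(u = w_0((\iota C).\mathrm{map}\, f) = [k]\), one side is \(s_0 \sigma^2(u)\) and the other is \(\sigma^2(u) s_0\). So I need \(s_0 s_{k+2} = s_{k+2} s_0\), which follows from \(M_{0,k+2}=2\).
\item \emph{cons.} This is an induction on the relation. I strengthen the claim to: \(\mathrm{HomEquiv}\, f\, f'\) implies \(\bar\sigma(\overline{w_0(f)}) = \bar\sigma(\overline{w_0(f')})\) for all iterates of the shift. The cleanest route is to show that \(\sigma\) descends to an endomorphism \(\bar\sigma\) of \(\overline{\mathrm{A}_\infty}\). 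It does, since \(\mathrm{A}_\infty(i+1,j+1) = \mathrm{A}_\infty(i,j)\) and so the Coxeter relations are preserved. I then prove, by induction on \(\mathrm{HomEquiv}\), that \(\overline{w_0(f)} = \overline{w_0(f')}\), and the cons case follows by applying \(\bar\sigma\).
\end{itemize}

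The main obstacle I expect is the formal bookkeeping in the swap\_naturality and cons cases. This requires the lemma that \(w_0\) intertwines the cons functor with \(\sigma\) on arbitrary paths, and working with the quotient monoid. To keep the relation checks uniform, I would use Lemma~\ref{group-vs-monoid} and work with \(\overline{\mathrm{A}_\infty}\) as the monoid quotient of \(\mathrm{FreeMonoid}(\mathbb{N})\) by the Coxeter congruence. Then \(\bar\sigma\) is defined simply by checking that \(\sigma\) maps the generating relations into the congruence, and each relation check is a single congruence membership.
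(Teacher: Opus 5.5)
Your proposal is correct and is essentially the construction the paper relies on (the paper gives no written proof, only the formalization): define \(w_0\) on generating arrows by recursion, extend it along the universal property of the path category, and descend to \(\mathrm{SList}(C)\) through the quotient. In that descent, \texttt{swap\_swap}, \texttt{triple} and \texttt{swap\_naturality} become \(s_0^2=e\), \(s_0s_1s_0=s_1s_0s_1\) and \(s_0s_{k+2}=s_{k+2}s_0\), and the recursive \texttt{cons} constructor is handled by the shift \(\bar\sigma\) on \(\overline{\mathrm{A}_\infty}\). Your appeal to Lemma~\ref{group-vs-monoid} is harmless but unnecessary, since these identities and the well-definedness of \(\bar\sigma\) (from \(\mathrm{A}_\infty(i+1,j+1)=\mathrm{A}_\infty(i,j)\)) can be checked directly in the group presentation using that the \(s_i\) are involutions.
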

Informally, the weight \(w_0\) records the indices at which the sequence of swaps corresponding to a morphism in \(\mathrm{FreeSListQuiv}(C)\)
is happening, and the weight \(w\) realizes a morphism of symmetric list as an element of \(\overline{A_\infty}\).
The latter can also be thought of as a permutation group: the Coxeter group \(\overline{\mathrm{A}_\infty}\) admits a
group homomorphism to the group of permutations of \(\mathbb{N}\) by sending the generator \(s_k\) to the permutation \((k\ k + 1)\).
One can use the identification of \(\overline{\mathrm{A}_n}\) with \(\Sigma_{n + 1}\) for every \(n\) and the fact that
\(\overline{\mathrm{A}_\infty}\) is a filtered colimit of the groups \(\overline{\mathrm{A}_n}\) to show that this homomorphism is
\smcLinkRange{SymmMonCoherence/SymmetricGroupCoxeterSystem.lean}{646}{660}{injective}.

We can prove that, through this weight and the interpretation of elements of \(\overline{\mathrm{A}_\infty}\) as permutations,
morphisms of symmetric lists permute elements of the source and target lists in the expected way
\begin{leancode}
def toPerm : weight (SList C) (Equiv.Perm ℕ) := …

lemma SList.toPerm_app_lt_of_lt {L₁ L₂ : SList C} (f : L₁ ⟶ L₂) (k : ℕ) (hk : k < L₂.length) :
    (toPerm.app f) k < L₁.length := …

theorem SList.getElem_toList_toPerm {L₁ L₂ : SList C} (f : L₁ ⟶ L₂) (i : ℕ) (hi : i < L₂.length) :
    L₂.toList[i] = L₁.toList[toPerm.app f i]'(toPerm_app_lt_of_lt f i hi) := …
\end{leancode}
An induction argument on morphisms of the category \(\mathrm{FreeSListQuiv}\left(C\right)\) further shows that \(w_0\) is
faithful (\smcLinkRange{SymmMonCoherence/SList/Relations.lean}{484}{504}{\lean{eq_of_w₀_app_eq}}) as a functor, i.e., that morphisms in \(\mathrm{FreeSListQuiv}\left(C\right)\) with equal
labels are equal.
Now, part of the main theorem takes the following form:
\begin{theorem}\label{w_faithful}
  The functor \( w \) is faithful, that is, two morphisms of symmetric lists are equal if and only if they have the same label
  in \(\overline{\mathrm{A}_\infty}\).
\end{theorem}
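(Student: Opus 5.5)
We must show that if \(f, g : L_1 \to L_2\) in \(\mathrm{SList}(C)\) satisfy \(w(f) = w(g)\), then \(f = g\). Since \(\pi_C : \mathrm{FreeSListQuiv}(C) \to \mathrm{SList}(C)\) is full and bijective on objects, we choose lifts \(\tilde f, \tilde g\) with the same source and target. By the commuting square, \(w_0(\tilde f)\) and \(w_0(\tilde g)\) are words in \(\mathrm{FreeMonoid}(\mathbb{N})\) with the same image in \(\overline{\mathrm{A}_\infty}\). By Lemma~\ref{group-vs-monoid}, the two words are then related by the smallest multiplicative congruence generated by the Coxeter relations. That congruence is the reflexive–symmetric–transitive closure of elementary rewrites \(u\,r\,v \leftrightarrow u\,r'\,v\), where \(r = r'\) is one of
\[
s_is_i = e, \qquad s_is_{i+1}s_i = s_{i+1}s_is_{i+1}, \qquad s_is_j = s_js_i \quad (|i-j|>1).
\]
The plan is to lift each elementary rewrite step to a relation between morphisms of \(\mathrm{FreeSListQuiv}(C)\) that is identified by \(\pi_C\).

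\textbf{Key lemma.} Let \(\varphi : L_1 \to L_2\) be a morphism of \(\mathrm{FreeSListQuiv}(C)\) and let \(\omega'\) be a word obtained from \(w_0(\varphi)\) by one elementary rewrite. Then there is \(\varphi' : L_1 \to L_2\) with \(w_0(\varphi') = \omega'\) and \(\pi_C(\varphi) = \pi_C(\varphi')\).

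Granted this, we chain along a rewrite sequence from \(w_0(\tilde f)\) to \(w_0(\tilde g)\). This produces \(\varphi_n : L_1 \to L_2\) with \(w_0(\varphi_n) = w_0(\tilde g)\) and \(\pi_C(\varphi_n) = f\). Since both \(\varphi_n\) and \(\tilde g\) are parallel, faithfulness of \(w_0\) (\lean{eq_of_w₀_app_eq}) gives \(\varphi_n = \tilde g\), hence \(f = g\). The symmetric steps are harmless: the key lemma applies to a rewrite in either direction.

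\textbf{Proof of the key lemma.} A path in \(\mathrm{FreeSListQuiv}(C)\) is a composite of generating arrows, each of the form \(x_1 \dblcolon_m \cdots \dblcolon_m x_k \dblcolon_m \mathrm{sw}_{a,b,l}\), with label \(k\). Because a path is determined by its source together with its word (this is the content of faithfulness of \(w_0\), and we also need the constructive version: any word applies uniquely to any list long enough), we can split \(\varphi = \alpha \circ \rho \circ \gamma\) where \(\rho\) has word \(r\). We then replace \(\rho\) by the unique path \(\rho'\) with the same source whose word is \(r'\), treating each relation in turn.
\begin{itemize}
\item For \(s_ks_k \to e\), \(\rho\) is \(k\) conses applied to \(\mathrm{sw}_{a,b,l}\mathrm{sw}_{b,a,l}\). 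This equals the identity by \lean{swap_swap} lifted through \lean{HomEquiv.cons}, using functoriality of cons in the quotient.
\item For the braid relation, \(\rho\) is the hexagon \lean{triple} prefixed by \(k\) conses.
\item For the commutation \(s_is_j = s_js_i\) with \(j > i+1\), the deeper swap has the form \(x_1 \dblcolon_m \cdots \dblcolon_m x_i \dblcolon_m (a \dblcolon_m b \dblcolon_m \psi)\) and the outer swap is \(\mathrm{sw}_{a,b,\cdot}\) under \(i\) conses. The two commute by \lean{swap_naturality} applied to \(\psi\), again prefixed by conses.
\end{itemize}
In each case \(\rho'\) has the same target as \(\rho\); this is automatic, since both realize the same permutation on the same source, or can be checked directly. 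So \(\alpha\) still composes with \(\rho'\), and we set \(\varphi' = \alpha \circ \rho' \circ \gamma\).

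\textbf{Main obstacle.} The hardest step is the bookkeeping in the key lemma. We must decompose a path at an arbitrary position of its word, identify a generating arrow from its label and source, and show that the relation instances land on matching sources and targets without \lean{eqToHom} clutter. The cleanest route is a "realization" function: given a source list \(L\) and a word \(\omega\) with all letters less than \(\mathrm{length}(L)-1\) at the appropriate points, it builds a path out of \(L\). We would then prove that every path equals the realization of its own word from its source, and that realization is compatible with concatenation. With these two facts, each rewrite case reduces to a statement about realizations of short words, proved by induction on the prefix depth \(k\) using \lean{HomEquiv.cons}.
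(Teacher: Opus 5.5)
\textbf{There is a genuine gap} where you pass from the equality in \(\overline{\mathrm{A}_\infty}\) to a chain of elementary rewrites. Lemma~\ref{group-vs-monoid}, applied to \(\mathrm{A}_\infty\), only gives you \emph{some} chain of rewrites in \(\mathrm{FreeMonoid}(\mathbb{N})\) from \(w_0(\tilde f)\) to \(w_0(\tilde g)\). Nothing prevents that chain from passing through words containing a letter \(N \geq \mathrm{length}(L_1) - 1\). For instance, it could insert \(s_N s_N\), perform braid and commutation moves involving \(s_N\), and later delete it. A generating arrow with label \(k\) needs a source of length at least \(k+2\), so no morphism out of \(L_1\) carries such a word. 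Your key lemma is therefore false as stated for steps like \(e \to s_N s_N\), and the remark that it ``applies in either direction'' does not help. Your realization function already requires letters below \(\mathrm{length}(L)-1\), but you never show that a chain respecting this bound exists.

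That existence is the missing idea. Set \(n = \mathrm{length}(L_1)\). The words \(w_0(\tilde f)\) and \(w_0(\tilde g)\) only involve letters \(< n-1\), and you need to know that they are already equal in \(\overline{\mathrm{A}_{n-1}}\). This amounts to injectivity of \(\overline{\mathrm{A}_{n-1}} \to \overline{\mathrm{A}_\infty}\), a non-trivial Coxeter-theoretic fact: a standard parabolic subgroup embeds. The paper obtains it from two ingredients:
\begin{itemize}
\item the identification \(\overline{\mathrm{A}_{n-1}} \cong \Sigma_n\), proved via the exchange-property criterion;
\item the compatible maps into permutations of \(\mathbb{N}\), equivalently the description of \(\overline{\mathrm{A}_\infty}\) as a filtered colimit of the \(\overline{\mathrm{A}_m}\).
\end{itemize}
This is where the Coxeter presentation of the symmetric groups enters the paper's proof, and your argument silently assumes it.

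Once the equality is known in the monoid presented by \(\mathrm{A}_{n-1}\), every intermediate word has letters \(< n-1\). Each such word is then realizable by a morphism out of \(L_1\) (the paper's \texttt{exists\_hom\_of\_weight\_eq}), and your key lemma and chaining go through. The repaired argument is essentially the paper's induction on \texttt{ConGen.Rel}, flattened into a rewrite sequence.
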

In our formalization, this theorem is called \smcLinkRange{SymmMonCoherence/SList/Relations.lean}{1267}{1272}{\lean{injective_toAinf_app}}, which is itself a
direct consequence of the declaration \lean{SList.map_eq_of_w₂_eq} in the listing below and
which is a version of the theorem once morphisms have been appropriately
lifted to \lean{FreeSListQuiv C}.
\begin{leancode}
/- Here, w₂ is the A∞-valued weight on FreeSListQuiv C induced by w₀
and (π C) is the quotient functor FreeSListQuiv C ⥤ SList C. -/
theorem SList.map_eq_of_w₂_eq {L L' : FreeSListQuiv C} (f g : L ⟶ L')
    (h : w₂.app f = w₂.app g) :
    (π C).map f = (π C).map g := …
\end{leancode}
We will prove \lean{SList.map_eq_of_w₂_eq}.
\begin{proof}
In this proof, we will be using the same notations as in the Lean code listing above: \(L\) and \(L'\) will be lists,
seen as elements of the free category on the quiver generating morphisms of symmetric lists, and \(f, g\) will be
morphisms in this category. We will let \(\mathrm{toAinf}\) be the canonical projection
\(\mathrm{FreeMonoid}(\mathbb{N}) \to \overline{\mathrm{A}_\infty}\).
A first slightly technical step is to cast the hypothesis \lean{h} as an equality in a smaller monoid. We are
using here the fact from Lemma~\ref{group-vs-monoid} that Coxeter groups are
presented as monoids with the same generators and relations as their presentations as groups.
Recall that \lean{w₂.app f} is \lean{toAinf (w₀.app f)}.
By induction, one sees that the lengths of the underlying lists of objects are preserved along morphisms in
\lean{SListQuiv C} (and hence in \lean{FreeSListQuiv C} and \lean{SList C}) and that the formal word \(w_0(f)\) attached
to a morphism between symmetric lists of length \( n \) cannot involve labels greater than or equal to \(n - 1\).
Thus, in our situation, the words \(w_0(f)\) and \(w_0(g)\) lift through the injective morphism
\[\iota_{n - 1} : \mathrm{FreeMonoid}\left(\mathrm{Fin}(n-1)\right) \hookrightarrow
  \mathrm{FreeMonoid}\left(\mathbb{N}\right)\]
induced by the injection \(\mathrm{Fin}(n-1) \hookrightarrow \mathbb{N}\),
where \(n\) is the length of the list underlying the source object \(L\).
There is furthermore a commutative diagram
\[\begin{tikzcd}
	{\mathrm{FreeMonoid}(\mathrm{Fin}(n))} & {\mathrm{FreeMonoid}(\mathbb{N})} \\
	{\overline{\mathrm{A}_n}} & {\overline{\mathrm{A}_{\infty}}}
	\arrow["{\iota_n}", hook, from=1-1, to=1-2]
	\arrow["{\mathrm{toA_n}}"', two heads, from=1-1, to=2-1]
	\arrow["{\mathrm{toAinf}}", two heads, from=1-2, to=2-2]
	\arrow[from=2-1, to=2-2]
\end{tikzcd}\]
and the bottom horizontal map is injective, which we can see via the explicit equivalence between
\(\overline{\mathrm{A}_n}\) and \( \Sigma_{n+1} \), and the fact that \(\overline{\mathrm{A}_\infty}\) is a filtered colimit of
the groups \(\overline{\mathrm{A}_n}\), compatible with these injections.

Hence, the equality \lean{h} can be refined as an equality of words in the generators of the group
\(\overline{\mathrm{A}_{n - 1}}\).
In fact, we can assume that the words in
\(\mathrm{FreeMonoid}(\mathrm{Fin}(n - 1))\) that lift \(w_0(f)\) and \(w_0(g)\) are values on \(f\) and \(g\) of some
weight \(w'_{n - 1}\) on \(\mathrm{FreeSListQuiv(C)}\) with values in \(\mathrm{FreeMonoid}(\mathrm{Fin}(n - 1))\).
A weight \(w'_{n - 1}\) with this requirement can be constructed out of \(w_0\) by extending to the free monoids any
retraction of the injection
\(\mathrm{Fin}(n-1) \hookrightarrow \mathbb{N}\). In our
implementation, we use the function \(k \mapsto k\ \% (n - 1)\) as such a retraction, and the resulting weight is
named \lean{w₀Fin}.

Hence, the equality \(\mathrm{toAinf}(w_0(f)) = \mathrm{toAinf}(w_0(g))\) is equivalent to the equality
\begin{align}
  \mathrm{toA_{n - 1}}(w_{n - 1}'(f)) = \mathrm{toA_{n - 1}}(w_{n - 1}'(g)) \label{eq_h}
\end{align}
and we can use the characterizing property of a quotient monoid: this equality means that
\(w_{n-1}'(f)\) and \(w_{n-1}'(g)\) are words in the free monoid on \(\mathrm{Fin}\left( n-1 \right)\) that are
related by the minimal congruence on that monoid generated by the relations defining the Coxeter group
\(\overline{\mathrm{A}_{n - 1}}\).
In \mathlib, the minimal congruence generated by a relation is set up inductively, and we recall its definition
to fix the notations
\begin{leancode}
inductive ConGen.Rel [Mul M] (r : M → M → Prop) : M → M → Prop
  | of : ∀ x y, r x y → ConGen.Rel r x y
  | refl : ∀ x, ConGen.Rel r x x
  | symm : ∀ {x y}, ConGen.Rel r x y → ConGen.Rel r y x
  | trans : ∀ {x y z}, ConGen.Rel r x y → ConGen.Rel r y z → ConGen.Rel r x z
  | mul : ∀ {w x y z}, ConGen.Rel r w x → ConGen.Rel r y z → ConGen.Rel r (w * y) (x * z)
\end{leancode}
In our formalization, for Coxeter monoids, the relation taken on words is also inductively generated
\begin{leancode}
variable {B : Type*} (M : CoxeterMatrix B) in
inductive CoxeterMatrix.monoidRelations : FreeMonoid B → FreeMonoid B → Prop
  | intro (i j : B) : monoidRelations ((.of i * .of j) ^ M i j) 1
\end{leancode}
The proof proceeds by induction on the equality~\eqref{eq_h} using the recursor characterizing
\lean{ConGen.Rel} above, generalizing both lists \(L\) and \(L'\) in the process
so that inductive hypotheses that may appear in some cases can be applied to morphisms with possibly
different sources and targets.
The cases \lean{ConGen.refl} and \lean{ConGen.symm} pose no technical difficulties.
The cases \lean{ConGen.trans} and \lean{ConGen.mul} are slightly more involved, and make clearer why we decided to
first reduce to the case of the group \(\overline{\mathrm{A}_{n-1}}\): as we are performing an induction on
\emph{words} instead of \emph{morphisms} here, the inductive hypotheses
in these cases will in fact involve extra words in \(\mathrm{FreeMonoid}\left(\mathrm{Fin}\left(n - 1\right)\right)\)
that \textit{a priori} might not be attached to any morphism in \(\mathrm{FreeSListQuiv}\left(C\right)\). For instance,
the \lean{ConGen.trans} case will provide us with an extra word \(z\) related to both \(\mathrm{toA_{n - 1}}(w_{n - 1}'(f))\)
and \(\mathrm{toA_{n - 1}}(w_{n - 1}'(g))\), and the inductive hypothesis can only be applied if we can find an actual
morphism \(h\) to or from \(L\) such that \(\mathrm{toA_{n - 1}}(w_{n - 1}'(h)) = z\).
Fortunately, this is the case, and we can prove
\begin{leancode}
-- Here, `w₀Fin n` is the weight called w'ₙ₋₁ in the text above.
lemma exists_hom_of_weight_eq (i : FreeSListQuiv C) (n : ℕ) (hj : i.length = n + 2)
    (w : FreeMonoid (Fin (n + 1))) :
    ∃ j : FreeSListQuiv C, ∃ f : i ⟶ j, (w₀Fin n).app f = w := …
\end{leancode}
Note that the object \(j\) constructed as above is in fact unique as we have observed previously
(through \lean{SList.getElem_toList_toPerm})
that the target of a morphism is constrained by its source and the underlying permutation of the morphism.
If we were to work in \(\overline{\mathrm{A}_\infty}\) instead of \(\overline{\mathrm{A}_{n - 1}}\), such morphisms cannot exist if
the word \(z\) involves letters that are greater than or equal to the length of the sources and targets of \(f\) and \(g\).
The case \lean{ConGen.mul} is similar, as extra words appear in the inductive hypothesis;
we refer the reader to our \smcLinkRange{SymmMonCoherence/SList/Relations.lean}{1161}{1262}{formalization} for more details.

Finally, the case \lean{ConGen.of} follows the same pattern for all possible relations:
in this case, the words for \(\mathrm{toA_{n - 1}}(w_{n - 1}'(f))\) and \(\mathrm{toA_{n - 1}}(w_{n - 1}'(g))\) will
be explicit small words of generators, either of the form \(1\), \(s_i^2\),
\((s_i s_{i+1})^3\) or \((s_i s_j)^2\).
In each of these cases, we can explicitly build a morphism (using our
constructors) starting from \(L\), with the expected label under \(w'_{n - 1}\).
Since we already know that at the level of free monoids and
\(\mathrm{FreeSListQuiv}\left(C\right)\), the labels determine morphisms, these new morphisms built by hand must in fact
be equal to \(f\) and \(g\) and we can then observe that these second morphisms are equal as morphisms of
symmetric lists via computations.
As hinted in the opening paragraph of this subsection, the relation \(s_i^2 = 1\) reduces to the relation
\lean{swap_swap}, the relation \((s_is_{i+1})^3 = 1\) is essentially the relation \lean{hexagon}
and the relation \((s_i s_j)^2 = 1\) (for \(|i - j| > 1\)) corresponds to \lean{swap_naturality}.
\end{proof}

Thanks to the fact that \(\overline{A}_\infty\) injects into \(\mathrm{Perm}\left(\mathbb{N}\right)\) as permutations
with finite support, we can fully interpret morphisms in \(\mathrm{SList}\left(C\right)\) as actual permutations.
\begin{leancode}
def toEquiv {x y : SList C} (f : x ⟶ y) :
    Fin y.length ≃ Fin x.length where
  toFun j := ⟨toPerm.app f j, toPerm_app_lt_of_lt _ _ j.prop⟩
  invFun j := ⟨(toPerm.app f).symm j, by simpa using toPerm_app_lt_of_lt (inv f) _ j.prop⟩
  left_inv j := by simp
  right_inv j := by simp
\end{leancode}
This function respects composition and inverses.

Motivated by this construction, given a symmetric list \( L \), we will call the \emph{set of indices} of \( L \)
the set \(\{0, \ldots, \mathrm{length}(L) - 1\}\). On the Lean side, we will refer to \lean{Fin L.length} as the
\emph{type of indices} of \lean{L}. Thanks to Theorem~\ref{w_faithful}, we obtain
\begin{leancode}
theorem SList.getElem_toList_toEquiv {x y : SList C} (f : x ⟶ y) (i : Fin y.length) :
     y.toList[i] = x.toList[(toEquiv f) i] := …

theorem SList.hom_eq_iff_toEquiv_eq {x y : SList C} (f g : x ⟶ y) :
    f = g ↔ (toEquiv f) = (toEquiv g) where …
\end{leancode}
and we can further cement the link between permutations (or rather, bijections of sets of indices)
and symmetric lists by performing an induction to show
\begin{leancode}
theorem SList.exists_lift_equiv {x y : SList C} (φ : Fin y.length ≃ Fin x.length)
    (hφ : ∀ i : Fin y.length, y.toList[i] = x.toList[φ i]) :
    ∃ f : x ⟶ y, toEquiv f = φ := …

def SList.liftEquiv {x y : SList C} (φ : Fin y.length ≃ Fin x.length)
    (hφ : ∀ i : Fin y.length, y.toList[i] = x.toList[φ i]) :
    x ⟶ y :=
  (exists_lift_equiv φ hφ).choose

lemma SList.toEquiv_liftEquiv {x y : SList C} (φ : Fin y.length ≃ Fin x.length)
    {hφ : ∀ i : Fin y.length, y.toList[i] = x.toList[φ i]} :
    toEquiv (liftEquiv φ hφ) = φ :=
  exists_lift_equiv φ hφ |>.choose_spec
\end{leancode}

This last batch of declarations along with \lean{SList.hom_eq_iff_toEquiv_eq} from the listing before provides in
practice all of the API we need for working with symmetric lists. Altogether, this formalizes the following result:
\begin{corollary}\label{equiv_type_indx}
  Let \(L_1\) and \(L_2\) be objects of \( \mathrm{SList}(C) \). The function that sends a morphism \(L_1 \to L_2\) to
  the corresponding bijection between the associated sets of indices realizes a bijection between morphisms \(L_1 \to L_2\)
  and bijections \[\phi : \{0, \ldots, \mathrm{length}(L_2) - 1\} \iso \{0, \ldots, \mathrm{length}(L_1) - 1\}\]
  such that \(L_1[\phi(i)] = L_2[i]\).
\end{corollary}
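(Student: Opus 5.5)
Corollary~\ref{equiv_type_indx} follows by assembling the earlier results into a bijection. The map sends \(f : L_1 \to L_2\) to \(\mathrm{toEquiv}(f)\).

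\textbf{Well-definedness.} First, \(\mathrm{toEquiv}(f)\) is a bijection \(\{0,\ldots,\mathrm{length}(L_2)-1\} \iso \{0,\ldots,\mathrm{length}(L_1)-1\}\). This holds because lengths are preserved along morphisms and because \(\mathrm{toPerm}\) carries indices below \(\mathrm{length}(L_2)\) to indices below \(\mathrm{length}(L_1)\) (\lean{toPerm_app_lt_of_lt}). The inverse is obtained by applying the same bound to \(\mathrm{inv}\, f\), which exists since \(\mathrm{SList}(C)\) is a groupoid. Second, \(\mathrm{toEquiv}(f)\) satisfies the label condition \(L_1[\phi(i)] = L_2[i]\). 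This is exactly \lean{SList.getElem_toList_toEquiv}, whose proof I would run by induction on a lift of \(f\) to \(\mathrm{FreeSListQuiv}(C)\). In that induction, a swap \(\mathrm{sw}_{a,b,l}\) realizes the transposition \((0\ 1)\) and exchanges the heads as required, and \(x \dblcolon_m f\) shifts indices by one while fixing \(0\). Compatibility with composition holds because \(\mathrm{toPerm}\) is a weight, i.e.\ a functor to \((\mathrm{B}\,\mathrm{Perm}(\mathbb{N}))^{\mathrm{op}}\).

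\textbf{Injectivity.} This is Theorem~\ref{w_faithful} combined with the injectivity of \(\overline{\mathrm{A}_\infty} \to \mathrm{Perm}(\mathbb{N})\). Suppose \(\mathrm{toEquiv}(f) = \mathrm{toEquiv}(g)\). Then \(\mathrm{toPerm}(f)\) and \(\mathrm{toPerm}(g)\) agree on all indices below \(\mathrm{length}(L_2)\). They also agree on the remaining indices, because both permutations fix every \(k \geq \mathrm{length}(L_1)-1\), the generators involved having labels below \(n-1\). Hence \(\mathrm{toPerm}(f) = \mathrm{toPerm}(g)\), so \(w(f) = w(g)\) by injectivity into permutations, and \(f = g\) by Theorem~\ref{w_faithful}. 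This is \lean{SList.hom_eq_iff_toEquiv_eq}.

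\textbf{Surjectivity.} This is \lean{SList.exists_lift_equiv}, and it is the main obstacle. I would argue by induction on \(L_2\), generalizing \(L_1\).
\begin{itemize}
  \item If \(L_2 = [\ ]\), then \(L_1 = [\ ]\) because the lengths agree, and the identity works.
  \item If \(L_2 = c \dblcolon l\), set \(j = \phi(0)\). The label condition gives \(L_1[j] = c\).
  \item Build an explicit morphism \(m : L_1 \to c \dblcolon L_1'\), where \(L_1'\) is \(L_1\) with index \(j\) erased. It is the composite of the swaps \(x_0 \dblcolon_m \cdots \dblcolon_m \mathrm{sw}\) that move position \(j\) to the front, i.e.\ the word \(s_{j-1}\cdots s_0\), and its \(\mathrm{toEquiv}\) is the corresponding cycle.
  \item The bijection \(\mathrm{toEquiv}(m)^{-1} \circ \phi\) fixes \(0\), so it restricts to a label-preserving bijection \(\phi'\) from the indices of \(l\) to those of \(L_1'\). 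The induction hypothesis gives \(f' : L_1' \to l\) with \(\mathrm{toEquiv}(f') = \phi'\).
  \item Take \(f = m \circ (c \dblcolon_m f')\). Check \(\mathrm{toEquiv}(f) = \phi\) using functoriality of \(\mathrm{toEquiv}\) and the computation of \(\mathrm{toEquiv}(c \dblcolon_m f')\) as the shift of \(\mathrm{toEquiv}(f')\) fixing \(0\).
\end{itemize}
The bookkeeping with \lean{Fin} casts, in particular reindexing after erasing position \(j\), is where most of the effort goes. I would isolate the index arithmetic in lemmas about \lean{Fin.succAbove} and \lean{List.eraseIdx}.
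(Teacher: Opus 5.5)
Your proposal is correct and follows the paper's route: the map is \(f \mapsto \mathrm{toEquiv}(f)\), it lands in label-preserving bijections by the index bound on \(\mathrm{toPerm}\) and the relabelling lemma, injectivity is Theorem~\ref{w_faithful} combined with injectivity of \(\overline{\mathrm{A}_\infty} \to \mathrm{Perm}(\mathbb{N})\), and surjectivity is the induction behind \texttt{SList.exists\_lift\_equiv}, which the paper leaves implicit and your move-to-front construction fills in correctly. One small slip: with \(n = \mathrm{length}(L_1)\), the generator \(s_{n-2} = (n-2\ \ n-1)\) may occur in the label, so the permutations fix every \(k \geq n\), not every \(k \geq n-1\); this weaker fact is still all your injectivity step needs, because the indices below \(\mathrm{length}(L_2) = n\) are already covered by \(\mathrm{toEquiv}(f) = \mathrm{toEquiv}(g)\).
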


Together, the definitions in this subsection construct a functor
\[ \mathrm{I}_C : \mathrm{SList}\left(C\right) \to \mathrm{Core}\left(\mathrm{Fin}_{/C}\right) \] by sending a symmetric
list \(L\) to its set of indices, along with the map \(i \mapsto L[i]\), and sending a map \(f\) to
the inverse of the equivalence between index types induced by \lean{toEquiv}.
The results above assert that this functor is fully faithful.
Essential surjectivity can be proven easily using that a finite set \(J\) is
equivalent to \(\mathrm{Fin}\left(k\right)\) where $k$ is the cardinality of $J$. Hence, the functor \(\mathrm{I}_C\) is
an equivalence of categories, though we cannot say anything about the symmetric monoidal aspect of this equivalence yet.

\subsection{From symmetric lists to the coherence theorem}\label{slist-vs-fsmc}

The remaining step to prove a statement somewhat resembling Theorem~\ref{coherence_theorem} is to link symmetric lists to
the free monoidal category on a set. This is part of the content of S. Piceghello's thesis
\cite{Piceghello}, which he formalized in the Rocq proof assistant within the HoTT library.
\begin{theorem}{\cite[Cor. 4.49]{Piceghello}}\label{slist_fsmc}
  Let \(C\) be a set, the category \(\mathrm{SList}\left(C\right)\) of symmetric lists on \(C\) is equivalent to the
  free symmetric monoidal category on \(C\).
\end{theorem}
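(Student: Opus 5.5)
To prove Theorem~\ref{slist_fsmc}, I will first equip \(\mathrm{SList}(C)\) with a symmetric monoidal structure and then check that it satisfies the universal property of the free symmetric monoidal category on \(C\). The free object can be taken to be the syntactic category \(\mathrm{F}(C)\) whose objects are formal tensor expressions in \(C\) and \(\mathbb{1}\), and whose morphisms are formal composites of \(\alpha,\lambda,\rho,\beta\) and their tensor products, modulo the axioms. \(\mathrm{F}(C)\) carries a strict symmetric monoidal functor \(\mathrm{F}(C)\to\mathsf{D}\) for any \(\mathsf{D}\) with a map \(C\to\mathsf{D}\). By the universal property of \(\mathrm{F}(C)\), the inclusion \(C \to \mathrm{SList}(C)\), \(c\mapsto [c]\), induces a strong symmetric monoidal functor \(\mathrm{F}(C)\to \mathrm{SList}(C)\). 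It then suffices to show that this functor is an equivalence. I plan to do so by building a quasi-inverse with Lemma~\ref{up-slist}, rather than proving directly that \(\mathrm{SList}(C)\) is free.

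\textbf{Step 1: monoidal structure on symmetric lists.} Define the tensor product as concatenation \(L \otimes L' = L \mathbin{+\!\!+} L'\) with unit \([\ ]\). Functoriality in \(L'\) follows from iterating \(x\dblcolon_m -\). Functoriality in \(L\) uses Lemma~\ref{up-slist}(i) with \(\mathsf{D}=\mathrm{Fun}(\mathrm{SList}(C),\mathrm{SList}(C))\), \(F_{\mathrm{nil}}=\mathrm{Id}\), \(\gamma_c = (c\dblcolon -)\circ -\) and \(\tau\) induced by the swaps. The associator and unitors are identities up to the canonical \(\upsilon\)-isomorphisms. The braiding \(L\otimes L'\to L'\otimes L\) is defined by recursion on \(L\): it moves each head element past \(L'\) by iterated swaps. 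The triangle, pentagon, hexagon and symmetry axioms are equalities of morphisms of symmetric lists. I will discharge them all through Corollary~\ref{equiv_type_indx}, using \lean{SList.hom_eq_iff_toEquiv_eq}: it suffices to compute the underlying bijections of index sets. These are the obvious block permutations of \(\mathrm{Fin}\), so each axiom reduces to an identity of functions on finite ordinals. This step avoids essentially all diagram chasing.

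\textbf{Step 2: a quasi-inverse \(\mathrm{SList}(C)\to\mathrm{F}(C)\).} Apply Lemma~\ref{up-slist}(i) with \(\mathsf{D}=\mathrm{F}(C)\). Take \(F_{\mathrm{nil}}=\mathbb{1}\) and \(\gamma_c = c\otimes -\). Let \(\tau_{a,b}\) be the composite \(a\otimes(b\otimes X)\cong(a\otimes b)\otimes X\xrightarrow{\beta\otimes X}(b\otimes a)\otimes X\cong b\otimes(a\otimes X)\). The symmetry condition follows from \(\beta_{b,a}\beta_{a,b}=\mathrm{Id}\). The Yang--Baxter hexagon for \(\tau\) is the standard consequence of the two hexagon axioms together with naturality and the pentagon; this is a genuine (finite) diagram chase in \(\mathrm{F}(C)\), or in any symmetric monoidal category. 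Call the resulting functor \(R\). The composite \(\mathrm{SList}(C)\to\mathrm{F}(C)\to\mathrm{SList}(C)\) is isomorphic to the identity by Lemma~\ref{up-slist}(ii): the components are built inductively from the structural isomorphisms, and the naturality squares hold automatically by faithfulness of toEquiv. For the other composite \(\mathrm{F}(C)\to\mathrm{SList}(C)\to\mathrm{F}(C)\), I construct a monoidal natural isomorphism to the identity by induction on formal expressions. It sends \(x\otimes y\) to the right-normalised form, using the canonical isomorphism \(R(L\mathbin{+\!\!+}L')\cong R(L)\otimes R(L')\), which is itself built from associators via Lemma~\ref{up-slist}(ii). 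Naturality only has to be checked on the generators \(\alpha,\lambda,\rho,\beta\) and on tensor products, by the inductive definition of morphisms in \(\mathrm{F}(C)\).

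\textbf{Main obstacle.} I expect the hardest part to be the second half of Step 2. This means showing that \(R\) is strong symmetric monoidal, i.e.\ that the comparison \(R(L\mathbin{+\!\!+}L')\cong R(L)\otimes R(L')\) intertwines the braiding of \(\mathrm{SList}(C)\) with \(\beta\), and then that the unit isomorphism is natural with respect to \(\beta\). Here no faithfulness trick is available, since the target is \(\mathrm{F}(C)\), where coherence is exactly what is unknown. I would first prove, by induction on \(L\) via Lemma~\ref{up-slist}(ii), the compatibility of \(R\) with the braiding one element at a time: moving a single \(c\) past \(L'\) corresponds to \(\beta_{c,R(L')}\) up to associators. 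This reduces to the hexagon axiom at each step. The full braiding compatibility then follows by a second induction on \(L\). The equivalence is symmetric monoidal because the functor \(\mathrm{F}(C)\to\mathrm{SList}(C)\) is, and a quasi-inverse of a strong monoidal equivalence inherits a monoidal structure.
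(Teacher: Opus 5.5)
Your proposal is correct and follows essentially the same route as the paper. The shared steps are these: concatenation on \(\mathrm{SList}(C)\) via \texttt{RecursiveFunctorData}, with the braiding obtained by moving head elements past the other list (Piceghello's partial braiding \(Q\)); the comparison functor \(N\) from the universal property; a quasi-inverse (your \(R\), the paper's \(J\)) built from \(\gamma_c = c\otimes -\); the unit \(N\circ J\cong\mathrm{Id}\) via Lemma~\ref{up-slist}(ii); and braided monoidality of \(J\) as the main computational step. The only difference is packaging. You build \(R\circ N\cong \mathrm{Id}\) by hand, checking naturality on the generators \(\alpha,\lambda,\rho,\beta\). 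The paper instead obtains the counit from the universal property of the free symmetric monoidal category (monoidal transformations are determined by their components at \(c\in C\)), then adjointifies to get a monoidal adjoint equivalence. Both routes rest on exactly the braided monoidality of \(J\) that you identified as the main obstacle.
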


Through this equivalence, the category \(\mathrm{SList}\left(C\right)\) admits an essentially unique symmetric monoidal
structure in such a way that the equivalence of Theorem~\ref{slist_fsmc} becomes a symmetric monoidal equivalence of
categories. In fact, defining this monoidal structure is part of proving the equivalence. We essentially translate the
relevant constructions from \cite{Piceghello} in our context.

\paragraph{Free symmetric monoidal categories}

The free monoidal category on a set was already formalized in \mathlib as a category presented by generators and relations.
In this presentation, objects are inductively generated by a unit object, objects coming from the base type, and a binary tensor product
operation. Morphisms are then inductively generated by formal associators, unitors, and tensor products of morphisms.
Finally, relations are added to ensure that associators and unitors are natural, that tensor products are functorial, and that
the triangle and pentagon identities hold. With such a definition, the fact that the resulting object is a
free monoidal category on the input type is essentially tautological.

In our formalization, we merely adapted this definition to the symmetric monoidal case by adding the relevant
extra generators and relations: the type of objects remains
constructed the same way, generators are added for the braiding isomorphisms, with
extra relations for the hexagon and symmetry identities.

As in the non-symmetric case, it is straightforward to show that this construction has
the expected universal property: every function from the base type to any symmetric monoidal category
\smcLinkRange{SymmMonCoherence/FreeSMC.lean}{409}{412}{lifts} to a
\smcLinkRange{SymmMonCoherence/FreeSMC.lean}{414}{430}{symmetric monoidal} functor out of the free symmetric monoidal
category we constructed. Similarly, monoidal natural transformations of symmetric monoidal functors
(or, more generally, from a symmetric monoidal functor to a lax braided functor\footnote{
  In fact, we only need the source functor to be oplax braided, i.e., an oplax monoidal functor compatible with the
  braiding, but \mathlib currently lacks type-classes for braided structure on oplax monoidal functors.}) out of this free
symmetric monoidal category can be \smcLinkRange{SymmMonCoherence/FreeSMC.lean}{472}{518}{defined} from their components at
objects coming from the base category, and are \smcLinkRange{SymmMonCoherence/FreeSMC.lean}{618}{633}{uniquely determined} by these components.

\paragraph{The symmetric monoidal structure on symmetric lists}
One of the main steps for proving Theorem~\ref{slist_fsmc} consists of defining a monoidal structure on the category of
symmetric lists.
To define the tensor product bifunctor, one lifts the operation of appending lists to a bifunctorial operation on
\(\mathrm{SList}\left(C\right)\). This is done using Lemma~\ref{up-slist} and implemented through the
\lean{RecursiveFunctorData} structure that we described in
subsection~\ref{pres-slist}, which allows one to define functors out of symmetric lists in an inductive way.

\begin{leancode}
abbrev SList.appendRecData : RecursiveFunctorData C (SList C ⥤ SList C) where
  nilObj := 𝟭 (SList C)
  consF c := Functor.whiskeringRight _ _ _|>.obj (c>~) -- c>~ is the functor c ::~ -
  swapIso x y := NatIso.ofComponents
    (fun F ↦
      (Functor.associator ..) ≪≫
        (Functor.isoWhiskerLeft F <| swapNatIso _ _) ≪≫
        (Functor.associator ..).symm)
    (fun {x y} f ↦ by ext; simp [swap_natural])
  …

def SList.appendFunctor : SList C ⥤ (SList C ⥤ SList C) := appendRecData.functor
\end{leancode}
Notice that this definition is the direct categorification of the usual inductive definition of appending two lists,
which shows that \lean{appendFunctor} is the usual operation on the underlying lists.

Extending this operation to a (not yet symmetric) \smcLink{SymmMonCoherence/SList/Monoidal.lean}{474}{monoidal structure}
on \(\mathrm{SList}\left(C\right)\) is easily done, as this operation is strictly associative and unital\footnote{This is one of the very
few places where we make use of the fact that the isomorphisms \(\upsilon_{\mathrm{nil}}\) and \(\upsilon_{\mathrm{cons}}\) that appear in Lemma~\ref{up-slist} are underlain by equalities. This is used as a shortcut to skip some computations, and is not essential in any way: we could also
follow the same computations as in~\cite{Piceghello} to avoid making use of any equality of objects.}.
The definition of the braiding is slightly more involved,
and we follow the exact same constructions as in \cite[Def. 4.25]{Piceghello}.
One first inductively defines an isomorphism
\[ Q_{x, l_1, l_2} : \left(x \dblcolon l_1\right) \otimes l_2 \simeq l_1 \otimes \left(x \dblcolon l_2\right), \]
that is natural in \(l_1\), by using the induction principle on \(l_1\) for natural transformations out of the category
of symmetric lists (Lemma~\ref{up-slist}). The base case is the isomorphism
\begin{align*}
  (x \dblcolon [\ ]) \otimes l_2 &\iso x \dblcolon ([\ ] \otimes l_2)&\text{characterization of \( \otimes \)}\\
    &\iso x \dblcolon l_2&\text{right unitor}.\\
\end{align*}
Then, given some isomorphism \(\left(x \dblcolon l_1\right) \otimes l_2 \iso l_1 \otimes \left(x \dblcolon l_2\right) \) and
\( y \in C \), one constructs \( Q_{x, y\dblcolon l_1, l_2} \) as the composition
\begin{align*}
  (x \dblcolon y \dblcolon l_1) \otimes l_2 &\iso x \dblcolon ((y \dblcolon l_1) \otimes l_2)&\text{characterization of \( \otimes \)}\\
      &\iso x \dblcolon y \dblcolon (l_1 \otimes l_2)&\text{characterization of \( \otimes \)}\\
      &\iso y \dblcolon x \dblcolon (l_1 \otimes l_2)&\text{using \(\mathrm{sw}_{x,y, l_1 \otimes l_2}\)}\\
      &\iso y \dblcolon ((x \dblcolon l_1) \otimes l_2)&\text{characterization of \( \otimes \)}\\
      &\iso y \dblcolon (l_1 \otimes (x \dblcolon l_2))&\text{induction}\\
      &\iso (y \dblcolon l_1) \otimes (x \dblcolon l_2)&\text{characterization of \( \otimes \).}\\
\end{align*}
One then \smcLinkRange{SymmMonCoherence/SList/Monoidal.lean}{669}{694}{inductively builds}
the full braiding out of \(Q\). The base case is again easily provided,
and assuming that some isomorphism \(l_1 \otimes l_2 \iso l_2 \otimes l_1\) is
constructed and \( x \in C\), one builds an isomorphism \((x \dblcolon l_1) \otimes l_2 \iso l_2 \otimes (x \dblcolon l_1)\) as the composition
\begin{align*}
  (x \dblcolon l_1) \otimes l_2 &\iso x \dblcolon (l_1 \otimes l_2)&\text{characterization of \( \otimes \)}\\
    &\iso x \dblcolon (l_2 \otimes l_1)&\text{induction}\\
    &\iso (x \dblcolon l_2) \otimes l_1&\text{characterization of \( \otimes \)}\\
    &\iso l_2 \otimes (x \dblcolon l_1)&\text{via \(Q_{x, l_2, l_1}\)}.\\
\end{align*}

\paragraph{The inclusion functor and the equivalence}
By the universal property of the free symmetric monoidal category on a type, the symmetric monoidal structure
from the previous paragraph induces a symmetric monoidal
functor \(N\) from the free symmetric monoidal category on \(C\) to symmetric lists, characterized up to a unique
monoidal isomorphism by the fact that \(N(c)\) is isomorphic to the singleton symmetric list $[c]$.\footnote{
  In fact, our implementation of free symmetric monoidal categories can
ensure that the isomorphisms $N(c) \iso [c]$ are (definitional) equalities. But for the purpose of stating things,
we prefer to avoid as much as possible a language that is not invariant under equivalences of categories.
Phrasing things in terms of isomorphisms does bring an additional subtlety in the fact that the family of isomorphisms
\(\left(N(c) \iso [c]\right)_{c \in C}\) should be considered as additional data characterizing
\(N\) in the category of symmetric monoidal functors and monoidal natural transformations.
Such families could \emph{a priori} have non-trivial automorphisms yielding non-trivial monoidal automorphisms
of \(N\).
In this particular case, there are no such automorphisms, as the objects \([c] \in \mathrm{SList}(C)\) do not have any
non-trivial automorphism and hence \(N\) is characterized up to a unique monoidal isomorphism by the mere existence of
such isomorphisms.}

In the other direction, one \smcLinkRange{SymmMonCoherence/SList/Equivalence.lean}{62}{86}{defines} a functor \(J\) from
symmetric lists to the free symmetric monoidal category on \(C\) using the induction principle for functors out of symmetric lists,
i.e., Lemma~\ref{up-slist} and the structure \lean{RecursiveFunctorData} that we introduced earlier.
The base case sends the empty list to the unit for the symmetric monoidal structure, and sends the functor
\(L \mapsto c\dblcolon L\) to the functor \( x \mapsto c \otimes x\).
Through direct computations (and several uses of \lean{SList.recNatTrans}, or rather its natural isomorphism
variant \lean{SList.recNatIso}), we can put a monoidal structure on \(J\) as well.
\smcLinkRange{SymmMonCoherence/SList/Equivalence.lean}{448}{519}{Compatibility with the braiding} is arguably the most computational part,
as it ends up relying on the recursive definition of the braiding in symmetric lists in terms of the partial
braiding \(Q\) mentioned above.

The \smcLinkRange{SymmMonCoherence/SList/Equivalence.lean}{528}{609}{unit isomorphism} is again constructed by induction,
leveraging the characterizing properties of the
functors \( J \) and \(N\): the case of an empty list is easy, and assuming an isomorphism \(N(J(L)) \iso L\) is
already constructed, its component at a symmetric list of the form \(c \dblcolon L\) is characterized as the
following composite isomorphism \begin{align*}
  N\left(J\left(c \dblcolon L\right)\right)
    &\iso N\left(c \otimes J\left(L\right)\right) &\text{by characterization of \(J\)}\\
    &\iso N\left(c\right) \otimes N\left(J\left(L\right)\right) &\text{by monoidality of \(N\)}\\
    &\iso [c] \otimes N\left(J\left(L\right)\right) &\text{by characterization of \(N\)}\\
    &\iso [c] \otimes L &\text{by induction}\\
    &\iso c\dblcolon L &\text{by definition}.\\
\end{align*}
The fact that this construction respects swaps boils down to a direct computation. One can furthermore show
that the unit isomorphism constructed this way is monoidal.

The counit isomorphism for the equivalence is easily defined, as it is supposed to be a monoidal natural
isomorphism between symmetric monoidal functors out of the free symmetric monoidal category on \(C\):
such a natural isomorphism is determined by its components at elements of \(C\), i.e., it
suffices to give a family of isomorphisms \(\left(J(N(c)) \iso c\right)_{c \in C}\), and such
isomorphisms are provided by the definitions of \( J \) and \( N \).
One can then use a small trick to show
that the equivalence constructed this way forms an adjoint equivalence:
\begin{lemma}
  Let \(F, G : \mathsf{C} \to \mathsf{D}\) be lax monoidal functors between monoidal categories, given
  monoidal isomorphisms \(\eta : F \circ G \simeq \mathrm{Id}\) and \(\epsilon : G \circ F \simeq \mathrm{Id}\),
  so that \((F, G, \eta, \epsilon)\) forms a (not necessarily adjoint) equivalence of categories.
  The counit of the adjointification \((F, G, \eta, \epsilon')\) of \((F, G, \eta, \epsilon)\) is monoidal.
\end{lemma}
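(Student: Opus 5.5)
The plan is to avoid computing with components at all. I would show that the counit \(\epsilon'\) of the adjointification is built from \(\eta\), \(\epsilon\), their inverses, and the structural isomorphisms of functor composition, using only operations that preserve monoidality. Concretely, the adjointification keeps \(\eta\) and replaces \(\epsilon\) by an explicit composite of whiskerings. Up to the conventions fixed by \mathlib's \lean{Equivalence.adjointify} construction, it has the shape
\[
\epsilon' \;=\; \epsilon \circ \bigl(G\cdot\eta\cdot F\bigr)^{-1}\circ \bigl(\epsilon\cdot G F\bigr)^{-1},
\]
interleaved with associators and unitors for composition of functors. First I will write down this formula precisely, unfolding the definition used in the formalization. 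This is a purely definitional step.

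Second, I will establish closure lemmas for monoidal natural transformations between lax monoidal functors:
\begin{enumerate}
\item Identities, vertical composites, and inverses of monoidal natural isomorphisms are monoidal. For inverses, invert the compatibility squares with \(\mu\) and with the unit structure maps.
\item If \(\theta : H \to K\) is monoidal and \(L\) is lax monoidal, then the whiskerings \(L\cdot\theta\) and \(\theta\cdot L\) are monoidal for the composite lax structures. For \(L\cdot\theta\), one applies \(L\) to the square for \(\theta\) and uses naturality of \(\mu^L\). For \(\theta\cdot L\), one uses the square for \(\theta\) at the objects \(L(X), L(Y)\) together with naturality of \(\theta\) with respect to \(\mu^L_{X,Y}\).
\item The associator and unitor isomorphisms of functor composition are monoidal for the induced lax structures, since they are componentwise identities.
\end{enumerate}
Most of these should already exist in \mathlib's API for lax monoidal functors (\lean{NatTrans.IsMonoidal} instances for composition, whiskering and the like). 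Any that are missing are short diagram chases.

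Third, I will combine these lemmas. Since \(\eta\) and \(\epsilon\) are monoidal by hypothesis and \(F, G\) are lax monoidal, every factor in the formula is monoidal. Therefore \(\epsilon'\) is monoidal. No triangle identity is needed for this argument; the triangle identities are only relevant to \(\epsilon'\) being a counit, which is the standard adjointification result.

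I expect the main obstacle to be bookkeeping rather than mathematics. The lax structure on \(G\circ F\) (and on \(F\circ G\cdot F\) and similar composites) must be definitionally the one for which the closure lemmas are stated. Otherwise the whiskered factors are monoidal for a structure that differs from the intended one by \lean{eqToHom}-type casts. To avoid this, I would state \(\epsilon'\) directly as a composite in the category of lax monoidal functors and monoidal natural transformations. I would lift \(\eta\), \(\epsilon\) to isomorphisms there, perform the adjointification formula with \(\mathrm{LaxMonoidalFunctor}\)-level whiskerings, and then check that the underlying natural transformation agrees with \(\epsilon'\) by \lean{ext} and \lean{simp}.
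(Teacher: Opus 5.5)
Your proposal is correct and matches the paper's argument. The paper observes that adjointification uses only whiskerings, vertical composites, inverses, and the associators and unitors of functor composition, so it can be carried out inside the bicategory of monoidal categories, lax monoidal functors and monoidal natural transformations. In Lean the lemma is then discharged entirely by typeclass inference from \texttt{NatTrans.IsMonoidal} instances for exactly the closure properties you list. Your fallback of performing the adjointification at the level of lax monoidal functors is the same idea as the paper's bicategorical phrasing, though the paper reports that the instance-based route alone already closes the goal automatically.
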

\begin{proof}
The adjointification process can be performed in any bicategory, in particular, it can be performed in the bicategory of
monoidal categories, lax monoidal functors and monoidal natural transformations.
\end{proof}
In Lean, the proof of the above lemma is essentially the same:
being a monoidal natural transformation is stated as a type class \lean{NatTrans.IsMonoidal} and instances
of this typeclass are provided for left and right whiskerings of monoidal natural transformations by a lax monoidal functor,
as well as for the associators and unitors. The adjointification process only involves such constructions, so the Lean proof that
adjointification preserves monoidality is completely automated by the typeclass inference system.

By adjointifying the equivalence formed by \(J\) and \(N\), we get an adjoint equivalence in which the counit is a monoidal
natural isomorphism, and it suffices to show that this new counit is the same as the one we constructed above.
Since the new counit of the adjoint equivalence is monoidal, it is uniquely determined by its components
\(J(N(c)) \iso c\) at objects \(c \in C\). Using merely the fact that \(N\) is fully faithful
(which does not require any adjointness) and is such that \(N(c) \simeq [c]\), we can show that such a
family of isomorphisms must be unique, as symmetric lists of the form \([c]\) have no automorphisms.

\subsubsection{Symmetric lists and finite types}
To guide automation and streamline proofs related to the monoidal structure on symmetric lists,
it is useful to define proxy equivalences
\begin{leancode}
def SList.Ψ (x y : SList C) : Fin x.length ⊕ Fin y.length ≃ Fin (x ⊗ y).length := …

def SList.Φ (x : C) (l : SList C) : Unit ⊕ Fin l.length ≃ Fin (x ::~ l).length := …
\end{leancode}
Under the hood, the equivalence \lean{SList.Ψ} is a cast along the equality
\[\mathrm{length}(x \otimes y) = \mathrm{length}(x) + \mathrm{length}(y)\]
followed by the equivalence (from \mathlib) \lean{finSumFinEquiv : Fin m ⊕ Fin n ≃ Fin (m + n)}, and
similarly for \lean{SList.Φ}.
Abstracting these equivalences via definitions rather than using their underlying nature as casts
gives a very systematic way of proving equalities of morphisms out of tensor products of symmetric lists. To prove an
equality \lean{f = g} where \lean{f, g : x ⊗ y ⟶ _} are morphisms of symmetric lists, one first uses
\lean{hom_eq_iff_toEquiv_eq}, then one calls the \lean{ext} tactic to introduce a term
\lean{i : Fin (x ⊗ y).length} in context,
and then one uses surjectivity of \lean{SList.Ψ} to obtain a term \lean{Fin x.length ⊕ Fin y.length} on which the induction
principle for sum types can be used, reducing to proving the two equalities
\begin{leancode}
∀ i : Fin x.length, (toEquiv f) (Ψ x y (.inl i)) = (toEquiv g) (Ψ x y (.inl i))
\end{leancode}
and
\begin{leancode}
∀ i : Fin y.length, (toEquiv f) (Ψ x y (.inr i)) = (toEquiv g) (Ψ x y (.inr i))
\end{leancode}
This design becomes very efficient at automating proofs involving
the monoidal structure on symmetric lists when paired with a systematic characterization of the
various basic operations of the monoidal structure, such as tensor products of morphisms, in terms of
\lean{toEquiv} and \lean{Ψ}.
For instance, we can show key formulas like
\begin{leancode}
lemma whiskerRight_apply_right {x y : SList C} (f : x ⟶ y) (z : SList C)
    (i : Fin y.length) :
    toEquiv (f ▷ z) (Ψ y z <| .inl i) = Ψ _ _ (.inl (toEquiv f i)) := by …
\end{leancode}
and marking lemmas of similar shape with attributes like \lean{@[simp, grind =]} enables automation tactics to use them.
All formulas of this kind are proved by induction on symmetric lists, using the inductive definition of
the tensor product of symmetric lists and the defining property of \lean{toEquiv} as induced by the
permutation of \(\mathbb{N}\) corresponding to the label of a morphism, which acts exactly as
expected on generating morphisms.

The formula in the code listing above is one of many that, show that through the equivalence \lean{SList.Ψ},
the equivalences between types of indices induced by a tensor product of morphisms act on each factor in the expected way,
and in fact proves that through the correspondence between morphisms of symmetric lists and
equivalences between their types of indices, the braiding we constructed via Piceghello's method indeed
corresponds to swapping the factors of a sum type (i.e., the braiding in the cocartesian monoidal structure on types).

The last remaining part of Theorem~\ref{coherence_theorem} hence comes from the fact that the
functor from symmetric lists on \(C\) to the groupoid of finite types over \(C\) is symmetric monoidal.
As we can bundle the equivalence \lean{SList.Ψ} to an equivalence lying over \(C\),
the equivalence \lean{SList.Ψ} itself \emph{is} the monoidal structure isomorphism on the
functor that sends a symmetric list to its type of indices.
Naturality of this equivalence and its compatibility with associators, unitors, and braidings is exactly the content of the
characterizing lemmas on these objects that we mentioned above, and thus
the \smcLink{SymmMonCoherence/SList/EquivFintypeGrpd.lean}{585}{Lean formalization} of the fact that the equivalence
\[\mathrm{I}_C : \mathrm{SList}(C) \to \mathrm{Core}(\mathrm{Fin}_{/C}) \]
is a symmetric monoidal equivalence of categories
can be mainly discharged to automation tactics. This finishes the proof of Theorem~\ref{coherence_theorem}.

\section{Pseudofunctors out of the pith of bicategories of spans}\label{section-pseudo-out-of-pith}
We now turn to the second aspect of our work, which is to use the coherence theorem to produce unbiased symmetric
monoidal categories as in Theorem~\ref{main_th}.
\subsection{Bicategories of spans}
The statement of Theorem~\ref{main_th} involves bicategories of spans.
We briefly recall their definitions and explain how we implement them, as they are currently
absent from \mathlib.

Given two classes of morphisms \(W_l\) and \(W_r\) in a category \(C\), one can consider spans with left legs in
\(W_l\) and right legs in \(W_r\): these are by definition diagrams in \(C\)
\[\begin{tikzcd}
  & {\widehat{S}} \\
  {c} && {c'}
  \arrow["{l}"', from=1-2, to=2-1]
  \arrow["{r}", from=1-2, to=2-3]
\end{tikzcd}\]
such that \(l \in W_l\) and \(r \in W_r\). This definition can be formalized as follows:
\begin{leancode}
structure Span {C : Type*} [Category* C]
    (Wₗ : MorphismProperty C) (Wᵣ : MorphismProperty C)
    (c c' : C) where
  apex : C
  l : apex ⟶ c
  r : apex ⟶ c'
  wl : Wₗ l
  wr : Wᵣ r
\end{leancode}
Spans admit a category structure, where a morphism \(S \to S'\) is a morphism between the apices
of the spans that is compatible with the projections.

We will say the classes of morphisms \(W_l\) and \(W_r\) are \emph{adequate} if they satisfy the following conditions:
\begin{itemize}
  \item They both contain identities and are stable under compositions.
  \item \(W_l\) has pullbacks against \(W_r\), i.e., for every diagram
    \[\begin{tikzcd}[cramped]
              x && z \\
              & y
              \arrow["f"', from=1-1, to=2-2]
              \arrow["g", from=1-3, to=2-2]
      \end{tikzcd}\]
  such that \(f \in W_r\) and \(g \in W_l\), there exists a pullback of \( g \) along \( f \) in \(C\).
\item \(W_l\) and \(W_r\) are stable under base change against each other, i.e., if
  \[\begin{tikzcd}[cramped]
          & t \\
          x && z \\
          & y
          \arrow["{f'}"', from=1-2, to=2-1]
          \arrow["{g'}", from=1-2, to=2-3]
          \arrow["\lrcorner"{anchor=center, pos=0.125, rotate=-45}, draw=none, from=1-2, to=3-2]
          \arrow["f"', from=2-1, to=3-2]
          \arrow["g", from=2-3, to=3-2]
  \end{tikzcd}\]
  is a pullback square in \(C\) where \(f \in W_r\) and \(g \in W_l\), then \(f' \in W_l\) and \(g' \in W_r\).
\end{itemize}
When the classes of morphisms \(W_l\) and \(W_r\) are adequate, one can define a bicategory of spans:
\begin{definition}\label{def-spans}
  Let \(C\) be a category and let \(W_l, W_r\) be adequate classes of morphisms. The \emph{bicategory of spans} of \(C \) with respect
  to \(W_l\) and \(W_r\), denoted by \(\mathrm{Span}(C, W_l, W_r)\), is the bicategory defined by the following data:
  \begin{itemize}
  \item Objects are objects of \(C\).
  \item Morphisms from \(c\) to \(c'\) are spans \(c \xleftarrow{f} \widehat{S} \xrightarrow{g} c'\).
  \item 2-morphisms are morphisms of spans.
  \item The identity morphism on \(c\) is the span \(c \xleftarrow{\mathrm{Id}_c} c \xrightarrow{\mathrm{Id}_c} c\).
  \item Composition of two spans \(c \xleftarrow{f} \widehat{S} \xrightarrow{g} c'\) and \(c' \xleftarrow{h} \widehat{T} \xrightarrow{k} c''\)
  is defined as the total span in the following diagram
  \[\begin{tikzcd}
          && {\widehat{TS}} \\
          & {\widehat{S}} && {\widehat{T}} \\
          c && {c'} && {c''}
          \arrow["q"', from=1-3, to=2-2]
          \arrow["p", from=1-3, to=2-4]
          \arrow["\lrcorner"{anchor=center, pos=0.125, rotate=-45}, draw=none, from=1-3, to=3-3]
          \arrow["f"', from=2-2, to=3-1]
          \arrow["g"', from=2-2, to=3-3]
          \arrow["h", from=2-4, to=3-3]
          \arrow["k", from=2-4, to=3-5]
  \end{tikzcd}\]
  where the square is a pullback square.
  \item Horizontal composition is induced by the universal property of pullbacks.
  \item Associator and unitor isomorphisms are defined as the canonical morphisms between pullbacks.
  \end{itemize}
\end{definition}
When \(W_l\) and \(W_r\) consist of all morphisms, adequacy means that \(C\) has pullbacks, and we denote the corresponding bicategory
of spans by \(\mathrm{Span}(C)\) in this case.

The most notable point when \smcLinkRange{SymmMonCoherence/Spans/Basic.lean}{398}{417}{implementing} this bicategory structure is the
quirk that the implementation uses an arbitrary pullback provided by the axiom of choice for the definition of the composition
of spans.
To make the actual pullback square underlying a composition in \(\mathrm{Span}\) as irrelevant as possible
and to reduce the amount of possible leakage from \mathlib's \lean{Limits.HasLimit} API (which provides the pullbacks via the
axiom of choice),
we introduce definitions
\begin{leancode}
variable {X Y Z : Spans C Wₗ Wᵣ} (S₁ : X ⟶ Y) (S₂ : Y ⟶ Z)
def Spans.πₗ : (S₁ ≫ S₂).apex ⟶ S₁.apex := Limits.pullback.fst _ _
def Spans.πᵣ : (S₁ ≫ S₂).apex ⟶ S₂.apex := Limits.pullback.snd _ _

def Spans.compPullbackCone : Limits.PullbackCone S₁.r S₂.l :=
  Limits.PullbackCone.mk (πₗ _ _) (πᵣ _ _) (comp_comm _ _)

def Spans.isLimitCompPullbackCone : Limits.IsLimit (compPullbackCone S₁ S₂) := …

def Spans.compLiftApex {c : C} (fₗ : c ⟶ S₁.apex) (fᵣ : c ⟶ S₂.apex) (hₘ : fₗ ≫ S₁.r = fᵣ ≫ S₂.l) :
    c ⟶ (S₁ ≫ S₂).apex :=
  Limits.PullbackCone.IsLimit.lift …
\end{leancode}
and make these objects the primary API for working with composition of spans.
For instance, instead of using \mathlib's existing \lean{CategoryTheory.Limits.pullbackAssoc},
the associators and unitors for the bicategory structure are redefined using \lean{Spans.compLiftApex} and
are characterized via \lean{Spans.πₗ} and \lean{Spans.πᵣ}, e.g.,
\begin{leancode}
lemma associator_hom_hom_πₗ {W X Y Z : Spans C Wₗ Wᵣ} (S₁ : W ⟶ X) (S₂ : X ⟶ Y) (S₃ : Y ⟶ Z) :
    (α_ S₁ S₂ S₃).hom.hom ≫ πₗ .. = πₗ .. ≫ πₗ .. := …
\end{leancode}
A design like this minimizes the amount of refactoring one would need to make to use a bundled family of chosen pullbacks instead of
the ones provided by the axiom of choice.
A systematic characterization of the associators and unitors via \lean{Spans.πₗ} and \lean{Spans.πᵣ} also creates an
environment that helps guide automation tactics like \lean{simp} and \lean{grind}.
\subsection{Building pseudofunctors out of the pith of spans}
In the following, we will now assume that \(C\) is a category with pullbacks, and we will assume that the left and right
classes for our spans consist of all morphisms (so that adequacy reduces to the fact that \(C\) has pullbacks).
Given a span \(S : c \xleftarrow{f} \widehat{S} \xrightarrow{g} c'\) in \(C\), the diagram
\[\begin{tikzcd}[cramped]
	&& {\widehat{S}} \\
	& {\widehat{S}} && {\widehat{S}} \\
	c && {\widehat{S}} && {c'}
	\arrow[equals, from=1-3, to=2-2]
	\arrow[equals, from=1-3, to=2-4]
	\arrow["\lrcorner"{anchor=center, pos=0.125, rotate=-45}, draw=none, from=1-3, to=3-3]
	\arrow["f"', from=2-2, to=3-1]
	\arrow[equals, from=2-2, to=3-3]
	\arrow[equals, from=2-4, to=3-3]
	\arrow["g", from=2-4, to=3-5]
\end{tikzcd}\]
exhibits \(S\) as isomorphic to the composition of two smaller spans
\(f^* : c \xleftarrow{f} \widehat{S} \xrightarrow{\mathrm{Id}_{\widehat{S}}} \widehat{S} \) and
\(g_! : \widehat{S} \xleftarrow{\mathrm{Id}_{\widehat{S}}} \widehat{S} \xrightarrow{g} c' \). The constructions
\(f \mapsto f^*\) and \(f \mapsto f_!\) are \smcLinkRange{SymmMonCoherence/Spans/Inclusions.lean}{24}{62}{both pseudofunctorial}, the first one being contravariant and the second one being covariant.

Internally to the bicategory of spans, the 1-morphism \(f_!\) is left adjoint to the
morphism \(f^*\): this can be seen by contemplating the diagrams
\[\begin{tikzcd}[cramped]
	&& {\widehat{S}} &&&&&& c \\
	&& {\widehat{S} \times_c \widehat{S}} &&&&&& {\widehat{S}} \\
	& {\widehat{S}} && {\widehat{S}} &&&& {\widehat{S}} && {\widehat{S}} \\
	{\widehat{S}} && c && {\widehat{S}} && c && {\widehat{S}} && c
	\arrow["{\Delta_f}", from=1-3, to=2-3]
	\arrow[curve={height=12pt}, equals, from=1-3, to=4-1]
	\arrow[curve={height=-12pt}, equals, from=1-3, to=4-5]
	\arrow[curve={height=12pt}, equals, from=1-9, to=4-7]
	\arrow[curve={height=-12pt}, equals, from=1-9, to=4-11]
	\arrow[from=2-3, to=3-2]
	\arrow[from=2-3, to=3-4]
	\arrow["\lrcorner"{anchor=center, pos=0.125, rotate=-45}, draw=none, from=2-3, to=4-3]
	\arrow["f"', from=2-9, to=1-9]
	\arrow[equals, from=2-9, to=3-8]
	\arrow[equals, from=2-9, to=3-10]
	\arrow["\lrcorner"{anchor=center, pos=0.125, rotate=-45}, draw=none, from=2-9, to=4-9]
	\arrow[equals, from=3-2, to=4-1]
	\arrow["f"', from=3-2, to=4-3]
	\arrow["f", from=3-4, to=4-3]
	\arrow[equals, from=3-4, to=4-5]
	\arrow["f", from=3-8, to=4-7]
	\arrow[equals, from=3-8, to=4-9]
	\arrow[equals, from=3-10, to=4-9]
	\arrow["f"', from=3-10, to=4-11]
\end{tikzcd}\]
which we can interpret as defining the unit \(\mathrm{Id}_{\widehat{S}} \to f^*f_!\) and counit
\(f_!f^* \to \mathrm{Id}_{c}\) morphisms of an adjunction. This adjunction is also pseudofunctorial with
respect to \(f\), i.e., the construction lifts to a pseudofunctor from \(C\) to a suitable bicategory of
adjunctions in \(\mathrm{Span}(C)\).

Furthermore, given a pullback square
\[\begin{tikzcd}
  {x} & {y} \\
  {z} & {t}
  \arrow["{c_0}", from=1-1, to=1-2]
  \arrow["{c_1}"', from=1-1, to=2-1]
  \arrow["\lrcorner"{anchor=center, pos=0.125}, draw=none, from=1-1, to=2-2]
  \arrow["{c_2}", from=1-2, to=2-2]
  \arrow["{c_3}"', from=2-1, to=2-2]
\end{tikzcd}\]
in \(C\), we can look at the diagram
\[\begin{tikzcd}[cramped]
    && x \\
    & z && y \\
    z && t && y
    \arrow["l"', from=1-3, to=2-2]
    \arrow["t", from=1-3, to=2-4]
    \arrow["\lrcorner"{anchor=center, pos=0.125, rotate=-45}, draw=none, from=1-3, to=3-3]
    \arrow[equals, from=2-2, to=3-1]
    \arrow["b"', from=2-2, to=3-3]
    \arrow["r", from=2-4, to=3-3]
    \arrow[equals, from=2-4, to=3-5]
\end{tikzcd}\]
as an isomorphism of 1-morphisms
  \[
    \left(x \xleftarrow{l} x \xrightarrow{t} y\right) \simeq \left(r^*b_!\right)
  \]
and we have seen that the first 1-morphism is also isomorphic to \(t_!l^*\). Hence, the pullback square above
can be interpreted as an isomorphism \[r^*b_! \simeq t_!l^*\] which looks like many ``base-change formulas''.\footnote{
  In the usual presentation of base change formulas,
one direction of the isomorphism is the canonical isomorphism obtained via calculus of adjunctions.
In this setting of spans, we can verify that the morphism we constructed is indeed that
morphism for the adjunctions and adjointable squares at hand.} While these general facts are not strictly needed
for our purpose of unbiasing symmetric monoidal categories, we still
\smcLinkRange{SymmMonCoherence/Spans/Adjunction.lean}{163}{190}{implement them} as a verification of the robustness
of our implementation of span bicategories.

Since adjunctions internal to bicategories transport through pseudofunctors, all of these constraints on adjunctions,
as well as the base change formula, must hold in the target bicategory. This gives a rather strong constraint
on the restriction of the pseudofunctors along the inclusions from base categories, and it is a classical theorem
(see e.g., \cite[Th. A.2]{HERMIDA2000164}) that these constraints in fact characterize pseudofunctors out of spans bicategories,
i.e., given a bicategory \(B\),
a pseudofunctor \(\mathrm{Span}\left(C\right) \to B\) can be defined out of the data of two pseudofunctors
\begin{align*}U : C \to B \\
  V : C^{\mathrm{op}} \to B
\end{align*}
such that \(U(x) = V(x)\) for all \(x\), such that \(U\) is left adjoint to \(V\) in a suitably pseudofunctorial
sense, and such that the base change morphisms induced by pullback squares via this data are isomorphisms.

In the situation of Theorem~\ref{main_th}, we are interested in defining pseudofunctors out of the pith of
the bicategory of spans, i.e., the sub-bicategory of spans one obtains by only keeping morphisms of spans where
the morphism between the apices is an isomorphism. We cannot expect the pseudofunctor that appears in
Theorem~\ref{main_th} to be the restriction of a pseudofunctor defined on the whole bicategory of spans, as in general
the functors \(f_!\) and \(f^*\) that appear in the theorem are not adjoint to each other\footnote{
  For instance, when \(f\) is the unique morphism \(\{0, 1\} \to \{*\}\), the functor \(f_!\) in Theorem
\ref{main_th} identifies with the tensor product \emph{bifunctor},
which in general does not preserve colimits as a bifunctor (it sometimes preserves colimits separately in
each variable, which is different) and hence cannot be a left adjoint.}.
We can see that the adjunction \( f_! \dashv f^* \) we described earlier does \emph{not} lift to the pith
unless the morphism \(f\) is an isomorphism. Yet, a pullback square still defines isomorphisms
\(r^*b_! \simeq t_!l^*\); but these isomorphisms no longer interpret
as a general base change morphism obtained via calculus of adjunctions internal to the pith of the bicategory.
Hence, to describe a pseudofunctor out of \(\mathrm{Pith}\left(\mathrm{Span}\left(C\right)\right)\)
through its restrictions along the inclusions of \(C\), one must add the base-change isomorphisms for pullback squares
as external extra data. Hence, we introduce the following definition:
\begin{definition}\label{def-pbc}
  Let \(C\) be a category and \(B\) be a bicategory.
  A \(B\)-valued \emph{Pith-Beck-Chevalley} system on \(C\) consists of the following data:
  \begin{itemize}
    \item A pseudofunctor \(U : C \to B\) and \(V : C^{\mathrm{op}} \to B\), such that \(U\) and \(V\) agree on objects.
    \item For every cartesian square
      \[\begin{tikzcd}
        {c_0} & {c_1} \\
        {c_2} & {c_3}
        \arrow["{t}", from=1-1, to=1-2]
        \arrow["{l}"', from=1-1, to=2-1]
        \arrow["\lrcorner"{anchor=center, pos=0.125}, draw=none, from=1-1, to=2-2]
        \arrow["S"{anchor=center}, draw=none, from=1-1, to=2-2]
        \arrow["{r}", from=1-2, to=2-2]
        \arrow["{b}"', from=2-1, to=2-2]
      \end{tikzcd}\]
      in \(C\), an isomorphism \[\mathrm{BC(S)} : V(b)U(r) \iso U(l)V(t) \] in \(B\).
  \end{itemize}
  The data must satisfy the following conditions:
  \begin{itemize}
    \item The isomorphisms \(\mathrm{BC}\) respect horizontal pasting of squares:
      given a diagram
      \[\begin{tikzcd}[cramped]
              a & b & c \\
              d & e & f
              \arrow["{t_0}", from=1-1, to=1-2]
              \arrow["{v_0}"', from=1-1, to=2-1]
              \arrow["L"{description}, "\lrcorner"{anchor=center, pos=0.125}, draw=none, from=1-1, to=2-2]
              \arrow["{t_1}", from=1-2, to=1-3]
              \arrow["{v_1}", from=1-2, to=2-2]
              \arrow["R"{description}, "\lrcorner"{anchor=center, pos=0.125}, draw=none, from=1-2, to=2-3]
              \arrow["{v_2}", from=1-3, to=2-3]
              \arrow["{b_0}"', from=2-1, to=2-2]
              \arrow["{b_1}"', from=2-2, to=2-3]
      \end{tikzcd},\]
      the diagram
      \[\begin{tikzcd}[cramped]
              && {U(v_0)V(t_1t_0)} \\
              {V(b_1b_0)U(v_2)} &&&& {U(v_0)\left(V(t_0)V(t_1)\right)} \\
              {\left(V(b_0)V(b_1)\right)U(v_2)} &&&& {\left(U(v_0)V(t_0)\right)V(t_1)} \\
              {V(b_0)\left(V(b_1)U(v_2)\right)} && {V(b_0)\left(U(v_1)V(t_1)\right)} && {\left(V(b_0)U(v_1)\right)V(t_1)}
              \arrow["{U(v_0)\cdot V_{\mathrm{comp}}}", from=1-3, to=2-5]
              \arrow["{\mathrm{BC}(\mathrm{hcomp}(L, R))}", from=2-1, to=1-3]
              \arrow["{V_{\mathrm{comp}} \cdot U(v_2)}"', from=2-1, to=3-1]
              \arrow["\alpha"', from=3-1, to=4-1]
              \arrow["\alpha"', from=3-5, to=2-5]
              \arrow["{V(b_0)\cdot\mathrm{BC}(R)}"', from=4-1, to=4-3]
              \arrow["{\alpha^{-1}}"', from=4-3, to=4-5]
              \arrow["{\mathrm{BC}(L)\cdot V(t_1)}"', from=4-5, to=3-5]
      \end{tikzcd}\]
      commutes.
    \item The isomorphisms \(\mathrm{BC}\) respect vertical pasting of squares:
      given a diagram
      \[\begin{tikzcd}[cramped]
              a & b \\
              c & d \\
              e & f
              \arrow["{h_0}", from=1-1, to=1-2]
              \arrow["{l_0}"', from=1-1, to=2-1]
              \arrow["T"{description}, "\lrcorner"{anchor=center, pos=0.125}, draw=none, from=1-1, to=2-2]
              \arrow["{r_0}", from=1-2, to=2-2]
              \arrow["{h_1}"', from=2-1, to=2-2]
              \arrow["{l_1}"', from=2-1, to=3-1]
              \arrow["B"{description}, "\lrcorner"{anchor=center, pos=0.125}, draw=none, from=2-1, to=3-2]
              \arrow["{r_1}", from=2-2, to=3-2]
              \arrow["{h_2}"', from=3-1, to=3-2]
      \end{tikzcd},\]
      the diagram
      \[\begin{tikzcd}
              && {U(l_1l_0)V(h_0)} \\
              {V(h_2)U(r_1r_0)} &&& {\left(U(l_1)U(l_0)\right)V(h_0)} \\
              {V(h_2)\left(U(r_1)U(r_0)\right)} &&& {U(l_1)\left(U(l_0)V(h_0)\right)} \\
              {\left(V(h_2)U(r_1)\right)U(r_0)} && {\left(U(l_1)V(h_1)\right)U(r_0)} & {U(l_1)\left(V(h_1)U(r_0)\right)}
              \arrow["{U_{\mathrm{comp}} \cdot V(h_0)}", from=1-3, to=2-4]
              \arrow["{\mathrm{BC}(\mathrm{vcomp}(T, B))}", from=2-1, to=1-3]
              \arrow["{V(h_2)\cdot U_{\mathrm{comp}}}", from=2-1, to=3-1]
              \arrow["\alpha^{-1}", from=3-1, to=4-1]
              \arrow["\alpha^{-1}"', from=3-4, to=2-4]
              \arrow["{\mathrm{BC}(B)\cdot U(r_0)}", from=4-1, to=4-3]
              \arrow["{\alpha}", from=4-3, to=4-4]
              \arrow["{U(l_1) \cdot \mathrm{BC}(T)}"', from=4-4, to=3-4]
      \end{tikzcd}\]
      commutes.
    \item The isomorphisms \(\mathrm{BC}\) respect horizontal unit squares:
      given a morphism \(f : x \to y\) in \(C\), the diagram
      \[\begin{tikzcd}[cramped]
              {V(\mathrm{Id}_y)U(f)} && {U(f)V(\mathrm{Id}_x)} \\
              {\mathrm{Id}_{\mathrm{V}(y)}U(f)} && {U(f)\mathrm{Id}_{V(x)}} \\
              & {U(f)}
              \arrow["{\mathrm{BC}}", from=1-1, to=1-3]
              \arrow["{V_{\mathrm{Id}} \cdot U(f)}"', from=1-1, to=2-1]
              \arrow["{U(f) \cdot V_{\mathrm{Id}}}", from=1-3, to=2-3]
              \arrow["\lambda"', from=2-1, to=3-2]
              \arrow["\rho", from=2-3, to=3-2]
      \end{tikzcd}\]
      commutes.
    \item The isomorphisms \(\mathrm{BC}\) respect vertical unit squares:
      given a morphism \(f : x \to y\) in \(C\), the diagram
      \[\begin{tikzcd}[cramped]
          {V(f)U(\mathrm{Id}_y)} && {U(\mathrm{Id}_x)V(f)} \\
          {V(f)\mathrm{Id}_{U(y)}} && {\mathrm{Id}_{U(x)}V(f)} \\
          & {V(f)}
          \arrow["{\mathrm{BC}}", from=1-1, to=1-3]
          \arrow["{ V(f) \cdot U_{\mathrm{Id}}}"', from=1-1, to=2-1]
          \arrow["{U_{\mathrm{Id}}\cdot V(f)}", from=1-3, to=2-3]
          \arrow["\rho"', from=2-1, to=3-2]
          \arrow["\lambda", from=2-3, to=3-2]
      \end{tikzcd}\]
      commutes.
  \end{itemize}
\end{definition}
We \smcLinkRange{SymmMonCoherence/Spans/PseudoFromBurnside/Basic.lean}{47}{164}{formalize} the above definition as follows.
\begin{leancode}
structure PseudofunctorCore
    (C : Type u₁) [Category.{v₁} C] (B : Type u₂) [Bicategory.{w₁, v₂} B]
where
  obj : C → B
  u {x y : C} : (x ⟶ y) → (obj x ⟶ obj y)
  v {x y : C} : (x ⟶ y) → (obj y ⟶ obj x)
  uId' {x : C} (f : x ⟶ x) (hf : f = 𝟙 x := by cat_disch) : u f ≅ 𝟙 (obj x)
  vId' {x : C} (f : x ⟶ x) (hf : f = 𝟙 x := by cat_disch) : v f ≅ 𝟙 (obj x)
  uComp' {x y z : C} (f : x ⟶ y) (g : y ⟶ z) (h : x ⟶ z) (hfg : f ≫ g = h := by cat_disch) :
    u h ≅ u f ≫ u g
  vComp' {x y z : C} (f : x ⟶ y) (g : y ⟶ z) (h : x ⟶ z) (hfg : f ≫ g = h := by cat_disch) :
    v h ≅ v g ≫ v f
  -- pseudofunctoriality of u
  u_associator {c₀ c₁ c₂ c₃ : C} (f : c₀ ⟶ c₁) (g : c₁ ⟶ c₂) (h : c₂ ⟶ c₃) : …
  u_left_unitor {c₀ c₁ : C} (f : c₀ ⟶ c₁) : …
  u_right_unitor {c₀ c₁ : C} (f : c₀ ⟶ c₁) : …
  -- pseudofunctoriality of v
  v_associator {c₀ c₁ c₂ c₃ : C} (f : c₁ ⟶ c₀) (g : c₂ ⟶ c₁) (h : c₃ ⟶ c₂) : …
  v_left_unitor {c₀ c₁ : C} (f : c₁ ⟶ c₀) : …
  v_right_unitor {c₀ c₁ : C} (f : c₁ ⟶ c₀) : …
  -- base change isomorphisms and their compatibilities
  baseChangeIso {c₀ c₁ c₂ c₃ : C} (t : c₀ ⟶ c₁) (l : c₀ ⟶ c₂) (r : c₁ ⟶ c₃) (b : c₂ ⟶ c₃)
    (S : IsPullback t l r b) :
    u r ≫ v b ≅ v t ≫ u l
  baseChangeIso_unit_vert {x y : C} (f : x ⟶ y) :
    (baseChangeIso f (𝟙 x) (𝟙 y) f (IsPullback.of_vert_isIso .mk)).hom =
    (uId' (𝟙 y)).hom ▷ v f ≫ (λ_ _).hom ≫ (ρ_ _).inv ≫ v f ◁ (uId' (𝟙 x)).inv
  baseChangeIso_unit_horiz {x y : C} (f : x ⟶ y) :
    (baseChangeIso (𝟙 x) f f (𝟙 y) (IsPullback.of_horiz_isIso .mk)).hom =
    u f ◁ (vId' (𝟙 y)).hom ≫ (ρ_ _).hom  ≫ (λ_ _).inv ≫ (vId' (𝟙 x)).inv ▷ u f
  baseChangeIso_comp_horiz : …
  baseChangeIso_comp_vert : …
\end{leancode}
We can observe a few technical things about this design:
\begin{enumerate}
  \item In Remark~\ref{eq_objects}, we detailed the woes of working with propositional equalities of objects in categories. This
    discussion applies equally to propositional equalities of 1-morphisms in bicategories, as they are objects of categories
    of morphisms.
    In our case, though, some equalities of 1-morphisms must appear: any 2-morphism in a locally discrete bicategory comes
    from such an equality. To avoid a proliferation of \lean{eqToHom}-type morphisms in our expressions,
    we employ a technique also used in \mathlib to deal with pseudofunctors out of locally discrete bicategories,
    which is to give definitional control over the return type of 2-isomorphisms: this is seen, for instance, in the
    field \lean{uComp'}, where the composition \lean{h} is made a parameter along with a proof that \lean{f ≫ g = h}.
  \item In order to phrase the fact that the pseudofunctors \(U\) and \(V\) agree on objects, we need to unpack
    all of their fields (\lean{uComp'}, \lean{u_associator}, etc.) in the structure to make it so that the agreement on objects is definitional. Trying
    to formulate this agreement on objects via propositional equalities of objects in the target bicategory would not be
    reasonable, as propositional equalities of objects in bicategories further amplifies the problems with dependent types
    described in Remark~\ref{eq_objects}.

    An alternative design would have been to package this part of the structure as the data of two
    pseudofunctors along with the extra data of a family of equivalences (in the bicategorical sense) between their
    objects.
    The added complexity of constantly using these extra equivalences
    to correct the source or target of the pseudofunctors every time we need to compose 1-morphisms,
    as well as the extra reassociations of compositions that become needed,
    would make this other possible approach really hard to work with in practice.
\end{enumerate}
Given \(\mathcal{S} := (U, V, \mathrm{BC})\) a Pith-Beck-Chevalley system on a category \(C\), and an isomorphism
\(\phi : x \to y\) in \(C\), we will denote by \(\eta_{\mathcal{S}}(\phi)\) the morphism
  \(U(\phi)V(\phi) \to \mathrm{Id}_y\) obtained as the composition
    \[\begin{tikzcd}[cramped]
        {U(\phi)V(\phi)} & {V(\mathrm{Id})U(\mathrm{Id})} & {\mathrm{Id}\circ\mathrm{Id}} & {\mathrm{Id}}
        \arrow["{\mathrm{BC}(S)^{-1}}", from=1-1, to=1-2]
        \arrow["{V_{\mathrm{Id}} \cdot U_{\mathrm{Id}}}", from=1-2, to=1-3]
        \arrow["\lambda", from=1-3, to=1-4]
    \end{tikzcd}\]
  where \(S\) is the cartesian square
  \[\begin{tikzcd}
    {x} & {y} \\
    {y} & {y}
    \arrow["{\phi}", from=1-1, to=1-2]
    \arrow["{\phi}"', from=1-1, to=2-1]
    \arrow["\lrcorner"{anchor=center, pos=0.125}, draw=none, from=1-1, to=2-2]
    \arrow["{\mathrm{Id}}", from=1-2, to=2-2]
    \arrow["{\mathrm{Id}}"', from=2-1, to=2-2]
  \end{tikzcd}.\]
The main result we formalize is
\begin{thdef}\label{exists-pseudo-bc-system}
  Let \(C\) be a category with pullbacks, \(B\) be a bicategory, and let \(\mathcal{S} := (U, V, \mathrm{BC})\) be a
  \(B\)-valued Pith-Beck-Chevalley
  system on \(C\). Then, there is a pseudofunctor
  \[ \mathcal{F}_{\mathcal{S}} : \mathrm{Pith}\left(\mathrm{Span}(C)\right) \to B \]
  with the following properties:
  \begin{itemize}
    \item For every \(c \in C\), \(\mathcal{F}_{\mathcal{S}}(c) = U(c)\).
    \item Given \(c, c' \in C\) and a span \(S : c \xleftarrow{f} \widehat{S} \xrightarrow{g} c'\) in \(C\),
      \[\mathcal{F}_{\mathcal{S}}(S) = U(g)V(f).\]
    \item Given \(c, c' \in C\), spans \(S : c \xleftarrow{f} \widehat{S} \xrightarrow{g} c'\) and
      \(S' : c \xleftarrow{f'} \widehat{S} \xrightarrow{g'} c'\), and given a morphism of spans \(\phi : \widehat{S} \simeq \widehat{S'}\),
      the diagram
      \[\begin{tikzcd}[cramped]
              {U(g)V(f)} && {U(g')V(f')} \\
              {\left(U(g'\phi)\right)\left(V(f'\phi)\right)} && {\left(U(g')\mathrm{Id}\right)V(f')} \\
              {\left(U(g')U(\phi)\right)\left(V(\phi)V(f')\right)} && {\left(U(g')\left(U(\phi)V(\phi)\right)\right)V(f')} \\
              & {\left(\left(U(g')U(\phi)\right)V(\phi)\right)V(f')}
              \arrow["{\mathcal{F}_{\mathcal{S}}(\eta)}", from=1-1, to=1-3]
              \arrow[equals, from=1-1, to=2-1]
              \arrow["{U_{\mathrm{comp}}\cdot V_{\mathrm{comp}}}"', from=2-1, to=3-1]
              \arrow["{\rho \cdot V(f')}"', from=2-3, to=1-3]
              \arrow["{\alpha^{-1}}", from=3-1, to=4-2]
              \arrow["{\left(U(g') \cdot \eta_{\mathcal{S}}(\phi)\right)\cdot V(f')}", from=3-3, to=2-3]
              \arrow["{\alpha \cdot V(f')}"'{pos=0.7}, from=4-2, to=3-3]
      \end{tikzcd}\]
      commutes.
    \item Given \(c, c', c'' \in C\), spans \(S : c \xleftarrow{f} \widehat{S} \xrightarrow{g} c'\) and
      \(S' : c' \xleftarrow{h} \widehat{S'} \xrightarrow{k} c''\) in \(C\). Denote by \(\pi_1\) (resp. \(\pi_2\)) the
      projection \(\widehat{S'S} \to \widehat{S}\) (resp. \(\widehat{S'S} \to \widehat{S'}\)), the diagram
      \[\begin{tikzcd}[cramped]
          {U(k\pi_2)V(f\pi_1)} &&&& {\left(U(k)V(h)\right)\left(U(g)V(f)\right)} \\
          {\left(U(k)U(\pi_2)\right)\left(V(\pi_1)V(f)\right)} \\
          {\left(\left(U(k)U(\pi_2)\right)V(\pi_1)\right)V(f)} &&&& {\left(\left(U(k)V(h)\right)U(g)\right)V(f)} \\
          {\left(U(k)\left(U(\pi_2)V(\pi_1)\right)\right)V(f)} &&&& {\left(U(k)\left(V(h)U(g)\right)\right)V(f)}
          \arrow["{\left(\mathcal{F}_{\mathcal{S}}\right)_{\mathrm{comp}, S, S'}}", from=1-1, to=1-5]
          \arrow["{U_{\mathrm{comp}}\cdot V_{\mathrm{comp}}}"', from=1-1, to=2-1]
          \arrow["{\alpha^{-1}}"', from=2-1, to=3-1]
          \arrow["{\alpha \cdot V(f)}"', from=3-1, to=4-1]
          \arrow["\alpha"', from=3-5, to=1-5]
          \arrow["{\left(U(k)\cdot\mathrm{BC}(\mathrm{Cs})^{-1}\right) \cdot V(f)}"', from=4-1, to=4-5]
          \arrow["{\alpha^{-1} \cdot V(f)}"', from=4-5, to=3-5]
      \end{tikzcd}\]
      commutes, where \(\mathrm{Cs}\) is the cartesian square
      \[\begin{tikzcd}
        {\widehat{S'S}} & {\widehat{S}} \\
        {\widehat{S'}} & {c'}
        \arrow["{\pi_1}", from=1-1, to=1-2]
        \arrow["{\pi_2}"', from=1-1, to=2-1]
        \arrow["\lrcorner"{anchor=center, pos=0.125}, draw=none, from=1-1, to=2-2]
        \arrow["{g}", from=1-2, to=2-2]
        \arrow["{g'}"', from=2-1, to=2-2]
      \end{tikzcd}.\]
    \item Given \(c \in C\), the diagram
      \[\begin{tikzcd}[cramped]
              {U(\mathrm{Id}_c)V(\mathrm{Id}_c)} && {\mathrm{Id}} \\
              & {\mathrm{Id}_c \circ \mathrm{Id}_c}
              \arrow["{(\mathcal{F}_{\mathcal{S}})_{\mathrm{Id}_c}}", from=1-1, to=1-3]
              \arrow["{U_{\mathrm{Id}_c} \cdot V_{\mathrm{Id}_c}}"', from=1-1, to=2-2]
              \arrow["\lambda"', from=2-2, to=1-3]
      \end{tikzcd}\]
      commutes.
  \end{itemize}
\end{thdef}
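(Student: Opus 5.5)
The pseudofunctor \(\mathcal{F}_{\mathcal{S}}\) is forced by the listed properties, so the plan is to define its data by those formulas and then verify the pseudofunctor axioms. On objects we set \(\mathcal{F}_{\mathcal{S}}(c) = U(c)\), and on a span \(S : c \xleftarrow{f} \widehat{S} \xrightarrow{g} c'\) we set \(\mathcal{F}_{\mathcal{S}}(S) = U(g)V(f)\). A 2-morphism of the pith is an isomorphism of spans \(\phi : \widehat{S} \iso \widehat{S'}\) with \(f'\phi = f\) and \(g'\phi = g\). We send it to the composite displayed in the third bullet, rewriting \(U(g)V(f)\) as \(U(g'\phi)V(f'\phi)\) via the definitional-control fields \lean{uComp'}/\lean{vComp'} and contracting with \(\eta_{\mathcal{S}}(\phi)\). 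The composition constraint is the composite in the fourth bullet, built from \(\mathrm{BC}(\mathrm{Cs})^{-1}\) for the chosen pullback square underlying composition in \(\mathrm{Span}(C)\). The identity constraint is \(\lambda \circ (U_{\mathrm{Id}}\cdot V_{\mathrm{Id}})\). Since the pith is locally a groupoid, every image 2-morphism must be invertible. This holds because \(\eta_{\mathcal{S}}(\phi)\) is invertible, as a composite of isomorphisms.

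The first verification is local functoriality. For \(\phi = \mathrm{Id}\), we need \(\eta_{\mathcal{S}}(\mathrm{Id})\) to be the canonical unitor composite. This follows from either unit-square axiom, since the square used to define \(\eta\) degenerates to a unit square. For \(\psi\phi\), we need \(\eta_{\mathcal{S}}(\psi\phi)\) to agree with the pasting of \(\eta_{\mathcal{S}}(\psi)\) and \(\eta_{\mathcal{S}}(\phi)\) through \(U_{\mathrm{comp}}\) and \(V_{\mathrm{comp}}\). The plan is to decompose the defining cartesian square for \(\psi\phi\) as a horizontal and then vertical pasting of squares built from \(\phi\), \(\psi\) and identities. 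The two pasting axioms then give the equality, after coherence for the pseudofunctors \(U,V\) and the bicategory \(B\). It is convenient first to prove an auxiliary lemma: \(\mathrm{BC}\) is natural with respect to isomorphisms of cartesian squares, meaning that replacing a pullback by an isomorphic one changes \(\mathrm{BC}\) by the evident \(U_{\mathrm{comp}}\) and \(V_{\mathrm{comp}}\) conjugation. This lemma is also derived from the pasting and unit axioms, by pasting with squares whose sides are isomorphisms.

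Next we establish naturality of the composition constraint in both variables with respect to span isomorphisms. This uses the auxiliary lemma, because the induced map on chosen pullbacks \(\widehat{S'S} \to \widehat{T'T}\) yields an isomorphism of cartesian squares. Then come the two unitality axioms. Here the chosen pullback of a span against an identity is isomorphic to a degenerate square, so the unit-square axioms apply after invoking the auxiliary lemma. This is where the characterisations via \lean{Spans.πₗ}/\lean{Spans.πᵣ} of the span unitors are used.

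The main obstacle is the associativity axiom. Composing three spans yields two chosen iterated pullbacks, related by the span associator. Each side of the pseudofunctor associativity hexagon unwinds to a string of \(\mathrm{BC}^{-1}\)'s for three cartesian squares arranged in a staircase. On one side they are grouped as a horizontal pasting, and on the other as a vertical pasting, of the big square with the small ones. I would first use the auxiliary lemma to transport along the span associator. I would then apply the horizontal-pasting axiom to one bracketing and the vertical-pasting axiom to the other, reducing both to the same triple of elementary \(\mathrm{BC}\)'s. The remaining steps are pure associator and unitor bookkeeping in \(B\) together with the pseudofunctor coherences of \(U\) and \(V\), which in Lean I would hope to discharge with \lean{bicategory}-coherence automation.
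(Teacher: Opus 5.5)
Your proposal is correct and follows the same route as the paper. The data of \(\mathcal{F}_{\mathcal{S}}\) is forced by the stated formulas, and the pseudofunctor axioms are checked by direct bicategorical computation from the pasting and unit-square axioms for \(\mathrm{BC}\); associativity (transport along the span associator, then horizontal versus vertical pasting of the staircase of pullback squares) is the most tedious part, as the paper itself notes. Your auxiliary invariance lemma for \(\mathrm{BC}\) is a sensible way to organize the checks the paper leaves to the formalization, but state it for isomorphisms of squares that may move the side corners as well as the apex, since naturality of the composition constraint under whiskering requires that form.
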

Note that the pseudofunctor that appears in the theorem is necessarily unique, as all of the data parts of a
pseudofunctor are fully described here. The fact that these components assemble into a pseudofunctor is purely
propositional and carries no extra data.

The \smcLink{SymmMonCoherence/Spans/PseudoFromBurnside/Pseudofunctor.lean}{531}{formalization} of
Theorem~\ref{exists-pseudo-bc-system} is obtained by bicategorical computations. The computations are rather
tedious as the diagrams involve many structural isomorphisms in bicategories (associators, unitors). The most tedious
compatibility to check is that of composition with associators in bicategories of spans,
which produces goals and expressions with a lot of bicategorical noise.
\mathlib's \mathlibLinkRange{Mathlib/Tactic/CategoryTheory/BicategoricalComp.lean}{44}{48}{\lean{bicategoricalComp}},
combined with \mathlib's
\mathlibLinkRange{Mathlib/Tactic/CategoryTheory/Bicategory/Basic.lean}{42}{53}{\lean{bicategory} tactic} is helpful,
but not enough so that the proofs could all be fully automated. We believe that the development of specialized 2-dimensional
rewriting techniques and tactics could greatly improve the formalization experience in \mathlib's framework for
bicategories. We refer the reader to our formalization for the computational details.

Theorem~\ref{exists-pseudo-bc-system} is surely well-known, but we could not find it precisely stated
in this way in the bicategorical literature (most entries tackle the problem of defining pseudofunctors out of the full bicategory of spans, but
not necessarily out of its pith). An \((\infty, 1)\)-categorical variant (stated in terms of cocartesian fibrations of \((\infty,1)\)-categories of spans) can be found as \cite[Th. 04AE]{kerodon}.

\section{Packaging symmetric monoidal categories as pseudofunctors}\label{section-packaging}
As the shape of the required data to produce a pseudofunctor out of
\(\mathrm{Pith}\left(\mathrm{Span}\left(\mathrm{Fin}\right)\right)\) is now clearly identified by Theorem~\ref{exists-pseudo-bc-system},
it remains to actually produce such data from the data of a symmetric monoidal category
\(\mathsf{C}\) to prove Theorem~\ref{main_th}.

\subsection{The Kleisli bicategory of symmetric lists}\label{subsection-kleisli-bicat}

Given a finite set \(J\) and a symmetric monoidal category \( \mathsf{C} \),
the product category \( \mathsf{C}^{J} \) that appears in Theorem~\ref{main_th} should be interpreted as the
category of symmetric monoidal functors from the free symmetric monoidal category on \( J \) to \( \mathsf{C} \), or
equivalently, according to Theorem~\ref{slist_fsmc}, as the category of symmetric monoidal functors
\( \mathrm{SList}(J) \to \mathsf{C} \).

Let \( J, K \) be finite sets and let \( g \) be a function \(J \to K \).
When inspecting the formula describing the functor \( g_! : \mathsf{C}^J \to \mathsf{C}^K \) that appears
in Theorem~\ref{main_th} and keeping in mind the interpretation of product categories as symmetric monoidal functors
from symmetric lists, it is reasonable to expect that the construction \( g \mapsto g_! \) and formulas about these
types of functors are instantiations in \( \mathsf{C} \) of formulas that hold universally in all symmetric monoidal categories,
i.e., formulas about symmetric lists.

Hence, to formalize the construction in Theorem~\ref{main_th}, it would be useful to be able to separate
things that happen at the universal level and to abstract the idea that the construction only instantiates
the universal formulas in \(\mathsf{C}\) as the last step.

For an ordinary algebraic theory, universal formulas are encoded by subcategories of free objects.
Equivalently, such formulas are encoded by the Kleisli categories for the corresponding monads:
the Kleisli category of a monad \(T\) on a 1-category \(\mathsf{C}\)
has by definition the same objects as \(\mathsf{C}\), and has morphisms from \(x\) to \(y\) defined as
morphisms \(x \to T(y) \) in \(\mathsf{C}\).
For instance, a universal formula for the theory
of commutative rings (where \(T := \mathbb{Z}[-]\)) is an equality of families of polynomials with integer coefficients.

Symmetric monoidal categories do not fit within this conceptual framework directly.
This is because, at heart, the structure of a symmetric
monoidal category is pseudoalgebraic: associativity and unitality only hold \emph{weakly}, i.e., up to specified natural isomorphisms rather
than up to equalities. It is much more natural to interpret a symmetric monoidal category as a \emph{pseudoalgebra}
for a \emph{pseudomonad}: these notions are the bicategorical analogues of the 1-categorical concepts of monads and algebras.
According to this interpretation, the appropriate object for encoding universal formulas would be a \emph{Kleisli bicategory}, a bicategorical analogue of the usual Kleisli category, for the pseudomonad corresponding to the theory of symmetric monoidal categories.

This interpretation turns out to be fruitful: the construction
\(I \mapsto \mathrm{SList}(I)\) with its canonical inclusion \(I \hookrightarrow \mathrm{SList}(I)\) and with the
universal property of symmetric lists that extends any function
\(J \to \mathrm{SList}(I)\) to a symmetric monoidal functor \(\mathrm{SList}(J) \to \mathrm{SList}(I)\), defines a
\emph{relative pseudomonad}~\cite[Def. 3.1]{Fiore2017} over the inclusion \(\mathrm{Set} \to \mathrm{Cat}\)\footnote{
  The fact that we have to use \emph{relative} pseudomonads here is somewhat artificial to our setup:
  the operation of sending a \emph{category} \(\mathsf{C}\) to the free symmetric monoidal category on \(C\) defines
  a \emph{bona fide} pseudomonad on \(\mathrm{Cat}\), but since we only formalized free symmetric monoidal categories
  on a set, and only established a link with symmetric lists in that setting, we are constrained to relative
  pseudomonads over the inclusion \(\mathrm{Set} \to \mathrm{Cat}\).}.
The Kleisli bicategory of a relative pseudomonad is also defined in~\cite{Fiore2017}. As we will
not need generalities on relative pseudomonads to state or formalize our results, we refer the reader to loc. cit. for more
details on the notion.

The interpretation of symmetric monoidal categories as pseudoalgebras motivates the following definition, which is the
specialization of~\cite[Th. 4.1]{Fiore2017} to our situation:
\begin{propdef}\label{kleisli-propdef}
  The \emph{Kleisli bicategory of symmetric lists} \( \mathcal{K}_{\mathrm{SList}} \) is the bicategory described by the
  following data:
  \begin{itemize}
    \item Objects are sets.
    \item Morphism categories \(I \rightsquigarrow J\) are product categories \(\left(\mathrm{SList}\left(J\right)\right)^I\).
    \item Identity morphisms \(I \rightsquigarrow I\) are given by the inclusions \(\iota_{I} : I \hookrightarrow
    \mathrm{SList}(I)\)
      that send an element \(i \in I\) to the singleton symmetric list.
    \item The composition of \(f : I \rightsquigarrow J \) with \(g : J \rightsquigarrow K\) is given by the composition
      \[\begin{tikzcd}[cramped]
          I & {\mathrm{SList}(J)} & {\mathrm{SList}(K)}
          \arrow["f", from=1-1, to=1-2]
          \arrow["{\Theta_g}", from=1-2, to=1-3]
      \end{tikzcd}\]
     where \( \Theta_g \) is the essentially unique extension of \( g \) to a symmetric monoidal functor.
    \item Given \(f, f' : I \rightsquigarrow J \), \(g, g' : J \rightsquigarrow K\), \(\varphi : f \to f' \) and
      \( \psi : g \to g' \), the horizontal composition of \(\varphi \) and \(\psi\) is the horizontal composition
      \( \Theta_{\psi} \cdot \varphi \)
      where \(\Theta_{\psi}\) is the unique monoidal natural transformation \(\Theta_g \to \Theta_{g'}\) induced by
      \(\psi\).
    \item Given \(f : I \rightsquigarrow J \), the right unitor \( f \circ \mathrm{Id}_I \iso f \) is the isomorphism
      \(\Theta_{f} \circ \iota_I \iso f \) that characterizes \(\Theta_f\) as the extension of \( f \) to symmetric lists.
    \item Given \(f : I \rightsquigarrow J \), the left unitor \( \mathrm{Id}_J \circ f \iso f \) is the isomorphism
      \(\Theta_{\iota_J} \circ f \iso f \) induced by the isomorphism
        \( \Theta_{\iota_J} \simeq \mathrm{Id}_{\mathrm{SList}(J)} \) that is deduced by the fact that both functors
        are monoidal and are isomorphic to the identity functor when restricted to singleton symmetric lists.
    \item Given \(f : I \rightsquigarrow J\), \(g : J \rightsquigarrow K\), \(h : K \rightsquigarrow L\), the associator
      is described by the diagram
      \[\begin{tikzcd}
              &&&&& {\mathrm{SList}(K)} \\
              I && {\mathrm{SList}(J)} \\
              &&&&& {\mathrm{SList}(L)}
              \arrow["{\Theta_h}", from=1-6, to=3-6]
              \arrow["f", from=2-1, to=2-3]
              \arrow["{\Theta_g}", from=2-3, to=1-6]
              \arrow[""{name=0, anchor=center, inner sep=0}, "{\Theta_{\Theta_h \circ g}}"', from=2-3, to=3-6]
              \arrow["\mu"', between={0.2}{1}, Rightarrow, from=0, to=1-6]
      \end{tikzcd}\]
      where \(\mu\) is the unique monoidal natural isomorphism whose component at a singleton symmetric list \([j]\)
      is the isomorphism
      \[\Theta_{\Theta_h \circ g}([j]) \iso \Theta_h(g(j)) \iso \Theta_h(\Theta_g([j])).\]
  \end{itemize}
\end{propdef}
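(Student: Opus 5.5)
The data listed in the statement is entirely determined by the universal property of $\mathrm{SList}$. So the only thing to prove is that the bicategory axioms hold, namely the pentagon and triangle identities, together with functoriality of horizontal composition and naturality of the associators and unitors. The key input is the $2$-dimensional part of the universal property of free symmetric monoidal categories, transported to $\mathrm{SList}(J)$ through Theorem~\ref{slist_fsmc}. For symmetric monoidal functors $F, G : \mathrm{SList}(J) \to \mathrm{SList}(K)$, monoidal natural transformations $F \to G$ are in bijection with families of morphisms $(F([j]) \to G([j]))_{j \in J}$. In particular, two monoidal natural transformations agree as soon as they agree on singleton lists.

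First, I would fix the data. For each $g : J \rightsquigarrow K$ I choose $\Theta_g$ as a symmetric monoidal functor with a specified isomorphism $\Theta_g \circ \iota_J \simeq g$. For $\psi : g \to g'$, the transformation $\Theta_\psi$ is the unique monoidal transformation restricting to $\psi$ along these isomorphisms. Uniqueness then gives functoriality of $\psi \mapsto \Theta_\psi$ immediately, since $\Theta_{\psi'\psi}$ and $\Theta_{\psi'}\Theta_\psi$ are both monoidal with the same singleton components. It also gives $\Theta_{\mathrm{Id}} = \mathrm{Id}$. This yields the interchange law for horizontal composition $\Theta_\psi \cdot \varphi$. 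The associator $\mu$ and the isomorphism $\Theta_{\iota_J} \simeq \mathrm{Id}$ exist and are unique by the same principle. Here I would need that composites of symmetric monoidal functors are symmetric monoidal and that whiskerings of monoidal transformations are monoidal, both of which are available as typeclass instances.

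Second, I would verify the coherence axioms. Consider the pentagon for $f, g, h, k$. Both sides are $2$-cells between functors of the form $(\text{monoidal functor}) \circ f$, and each side is a whiskering by $f$ of a composite of monoidal natural isomorphisms between symmetric monoidal functors $\mathrm{SList}(J) \to \mathrm{SList}(M)$. One must check that every constituent, including $\Theta_\mu$-type terms and whiskerings $\Theta_k \cdot \mu$, is monoidal. It then suffices to compare components at singletons $[j]$. There, each side unfolds, through the characterizing isomorphisms $\Theta_{-}([j]) \simeq (-)(j)$, to a composite of those characterizing isomorphisms, and these agree by direct cancellation. The triangle identity and naturality of unitors and associators are handled the same way. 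For the left unitor, the singleton component of $\Theta_{\iota_J} \simeq \mathrm{Id}$ is by definition the characterizing isomorphism.

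The main obstacle is the pentagon. Its terms mix whiskering by the non-monoidal family $f$ with monoidal transformations, so one must carefully factor every term as ``monoidal transformation, whiskered by $f$'' before applying uniqueness. This includes rewriting $\Theta_h \cdot \mu_{f,g}$-style pieces and identifying $\Theta_{\mu}$ with a whiskered composite. That last identification is itself a uniqueness argument on singletons. I would first prove a general lemma: a $2$-cell of the form $\alpha \cdot f$, with $\alpha$ monoidal, is determined by the singleton components of $\alpha$. I would then reduce each axiom to a singleton computation of characterizing isomorphisms. Corollary~\ref{equiv_type_indx} can also serve as a fallback: singleton lists $[c]$ have no nontrivial automorphisms, so many of these component equalities become automatic.
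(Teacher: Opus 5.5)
Your proposal is correct and takes the same route as the paper. Every bicategory axiom is an equality of components of monoidal natural transformations between symmetric monoidal functors out of some $\mathrm{SList}(J)$, possibly whiskered by $f$, so it suffices to compare them on singletons $[j]$, where everything reduces to bookkeeping with the characterizing isomorphisms of $\Theta$ (and their naturality), which the formalization discharges with \texttt{simp}. One caveat: your fallback via Corollary~\ref{equiv_type_indx} is generally unavailable, because the singleton components land in lists like $\Theta_h(g(j))$, which are arbitrary and may have nontrivial automorphisms; only components between lists isomorphic to $[j]$ itself, as for $\Theta_{\iota_J} \simeq \mathrm{Id}$, are rigid.
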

All the proofs required to show that the data of Proposition~\ref{kleisli-propdef} indeed defines a bicategory structure
follow the exact same pattern:
the required equalities are equalities of families of morphisms of symmetric lists which are
components of monoidal natural transformations between monoidal functors out of some category of symmetric lists. It
thus suffices to show that these transformations are equal at components consisting of singleton symmetric lists. In this case,
the equalities reduce to equalities involving nothing but characterizing isomorphisms for \(\Theta\).
The fact that they suitably cancel each other in the diagrams is bookkeeping that Lean can discharge via the \lean{simp} tactic.

In ordinary 1-category theory, it is a classical fact that algebras for a monad \(T\) interpret as suitable
presheaves on the Kleisli category of the monad~\cite[Th. 1.2]{Day_1977}. This interpretation is obtained
by restricting the Yoneda embedding of the category of \(T\)-algebras along the canonical fully
faithful functor from the Kleisli category to the category of \(T\)-algebras.

Interpreting symmetric monoidal categories as pseudoalgebras for a (relative) pseudomonad, it is thus reasonable to
expect that symmetric monoidal categories interpret as \(\mathrm{Cat}\)-valued pseudofunctors out of the opposite of
the Kleisli bicategory of Proposition~\ref{kleisli-propdef}:
\begin{propdef}\label{exists-pseudo-kleisli}
  Let \( \mathsf{C} \) be a symmetric monoidal category. There is a pseudofunctor
  \[ \mathsf{C}_{\mathcal{K}} : \left(\mathcal{K}_{\mathrm{SList}}\right)^{\mathrm{op}} \to \mathrm{Cat} \]
  with the following properties:
  \begin{itemize}
    \item Given a set \(J\), \(\mathsf{C}_{\mathcal{K}}(J)\) is the category \( \mathsf{C}^J \).
    \item Given a morphism \(f : J \rightsquigarrow K \) in \( \mathcal{K}_{\mathrm{SList}} \), the functor
      \( \mathsf{C}_{\mathcal{K}}(f) \)
      is the composition
      \[\begin{tikzcd}
          {\mathsf{C}^K} & {\mathsf{C}^{\mathrm{SList}(K)}} & {\mathsf{C}^J}
          \arrow["{\Psi_{K, \mathsf{C}}}", from=1-1, to=1-2]
          \arrow["{f^*}", from=1-2, to=1-3]
      \end{tikzcd}\]
      where \(\Psi_{K, \mathsf{C}}\) is the functor that extends a function \( K \to \mathsf{C}\) into a symmetric monoidal functor
      \(\mathrm{SList}(K) \to \mathsf{C}\).
    \item Given morphisms \(f, f' : J \rightsquigarrow K \) in \( \mathcal{K}_{\mathrm{SList}} \) and a
      2-morphism \(\varphi : f \to f'\), the action of \(\mathsf{C}_{\mathcal{K}}\) on \( \varphi \) is
      described by the diagram
      \[\begin{tikzcd}
	{\mathsf{C}^K} & {\mathsf{C}^{\mathrm{SList}(K)}} && {\mathsf{C}^J}
	\arrow["{\Psi_{K, \mathsf{C}}}", from=1-1, to=1-2]
	\arrow[""{name=0, anchor=center, inner sep=0}, "{f^*}", curve={height=-18pt}, from=1-2, to=1-4]
	\arrow[""{name=1, anchor=center, inner sep=0}, "{(f')^*}"', curve={height=18pt}, from=1-2, to=1-4]
	\arrow["{\varphi^*}", between={0.2}{0.8}, Rightarrow, from=0, to=1]
      \end{tikzcd}.\]
    \item Given a set \(J\), the unitality isomorphism
        \[\mathsf{C}_{\mathcal{K}}(\mathrm{Id}_J) \iso \mathrm{Id}_{C^J}\]
      is such that its component at \(x : J \to \mathsf{C}\) is the isomorphism
      \[\iota_{J}^*\left(\Psi_{J, \mathsf{C}}(x)\right) \stackrel{\mathrm{def}}{=} \Theta_{x} \circ \iota_J \iso x \]
      that is part of the defining characterization of \(\Theta_{x}\) as the unique symmetric monoidal functor that
      extends \(x\) along \(\iota_J\).
    \item Given morphisms \(f : J \rightsquigarrow K \) and \(g : K \rightsquigarrow L\) in
      \(\mathcal{K}_{\mathrm{SList}}\),
      the composition isomorphism
        \[\mathsf{C}_{\mathcal{K}}(gf) \iso \mathsf{C}_{\mathcal{K}}(g) \circ \mathsf{C}_{\mathcal{K}}(f) \]
      is described by the diagram
      \[\begin{tikzcd}
              & {\mathsf{C}^{\mathrm{SList}(L)}} && {\mathsf{C}^{K}} \\
              {\mathsf{C}^L} &&& {\mathsf{C}^{\mathrm{SList}(K)}} & {\mathsf{C}^{J}} \\
              & {\mathsf{C}^{\mathrm{SList}(L)}}
              \arrow[""{name=0, anchor=center, inner sep=0}, "{g^*}", from=1-2, to=1-4]
              \arrow["{\Psi_{K, \mathsf{C}}}", from=1-4, to=2-4]
              \arrow["{\Psi_{L, \mathsf{C}}}", from=2-1, to=1-2]
              \arrow["{\Psi_{L, \mathsf{C}}}"', from=2-1, to=3-2]
              \arrow["{f^*}"', from=2-4, to=2-5]
              \arrow["{\Theta_{g}}"', from=3-2, to=2-4]
              \arrow["{\mu_{\mathsf{C}}}", between={0}{0.8}, Rightarrow, from=0, to=3-2]
      \end{tikzcd}\]
      where \(\mu_{\mathsf{C}}\) is a natural isomorphism such that the component at \(x : L \to \mathsf{C} \) is the
      unique monoidal isomorphism
      \[\Psi_{K, \mathsf{C}}\left(g^*\left(\Psi_{L, \mathsf{C}}(x)\right)\right) \stackrel{\mathrm{def}}{=}
      \Theta_{\Theta_{x} \circ g} \iso \Theta_{g} \circ \Theta_x \stackrel{\mathrm{def}}{=} \Theta_{g} \circ \Psi_{L,
    \mathsf{C}}(x) \]
      induced by the symmetric monoidal structures on both sides and by the family of isomorphisms
      \[
        \big(\Theta_{\Theta_{x} \circ g}\left([l]\right) \iso \Theta_{x}(g(l)) \iso
        \Theta_{x}(\Theta_g([l]))\big)_{l \in L}
      \]
      coming from the characterizing property of \(\Theta_{h}\) as the essentially unique monoidal functor out of
      symmetric lists extending a given function \(h\).
  \end{itemize}
    Furthermore, for every \(f : J \rightsquigarrow K\) in \(\mathcal{K}_{\mathrm{SList}}\), the functor
    \(\mathsf{C}_{\mathcal{K}}(f)\) carries a structure of a symmetric monoidal functor,
    induced by composition of symmetric monoidal structures on \(\Psi_{K, \mathsf{C}}\) and on \(f^*\).
    The action on 2-morphisms as well as the isomorphisms for composition and unitality are monoidal with respect
    to this structure.
\end{propdef}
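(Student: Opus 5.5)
The plan is to assemble the pseudofunctor $\mathsf{C}_{\mathcal{K}}$ directly from the data listed in the statement. All coherence axioms will then be reduced to equalities of monoidal natural transformations out of categories of symmetric lists. By the uniqueness part of the universal property of $\mathrm{SList}$, such transformations are determined by their components at singleton lists.

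\textbf{Step 1: the action of $\Psi_{K,\mathsf{C}}$.} First I set up $\Psi_{K,\mathsf{C}} : \mathsf{C}^K \to \mathsf{C}^{\mathrm{SList}(K)}$ as a functor. On objects it sends $x$ to the symmetric monoidal extension $\Theta_x$, obtained from Theorem~\ref{slist_fsmc} and the universal property of the free symmetric monoidal category. On morphisms it sends $u : x \to x'$ to the unique monoidal transformation $\Theta_u$ whose components on singletons are $u$ conjugated by the characterizing isomorphisms $\Theta_x([k]) \iso x(k)$. Functoriality follows from uniqueness.

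\textbf{Step 2: the data of $\mathsf{C}_{\mathcal{K}}$.} Next I define:
\begin{itemize}
\item $\mathsf{C}_{\mathcal{K}}(f) := f^* \circ \Psi_{K,\mathsf{C}}$;
\item $\mathsf{C}_{\mathcal{K}}(\varphi)$ as whiskering $\Psi_{K,\mathsf{C}}$ by the restriction $\varphi^*$, whose component at $x$ is $\Theta_x \cdot \varphi$;
\item the unitor from the characterizing isomorphism $\Theta_x\circ\iota_J \iso x$;
\item the compositor from $\mu_{\mathsf{C}}$.
\end{itemize}
To construct $\mu_{\mathsf{C}}$, note that $\Theta_g \circ \Theta_x$ is a composite of symmetric monoidal functors, hence symmetric monoidal. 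The family of isomorphisms on singletons in the statement then extends uniquely to a monoidal isomorphism $\Theta_{\Theta_x\circ g} \iso \Theta_g\circ\Theta_x$. Naturality in $x$ again follows by checking on singletons, where both sides reduce to $u$ conjugated by characterizing isomorphisms.

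\textbf{Step 3: the pseudofunctor axioms.} The axioms to verify are:
\begin{itemize}
\item functoriality on 2-morphisms, which is immediate from the definition as whiskering;
\item naturality of the compositor in $f$ and $g$;
\item compatibility with the associators and unitors of $\mathcal{K}_{\mathrm{SList}}$ from Proposition~\ref{kleisli-propdef}.
\end{itemize}
Each is an equality of natural transformations between functors $\mathsf{C}^L \to \mathsf{C}^J$. Evaluated at $x$ and $j\in J$, it becomes an equality of morphisms in $\mathsf{C}$ between values at $f(j) \in \mathrm{SList}(K)$ of two \emph{monoidal} functors $\mathrm{SList}(K)\to\mathsf{C}$, for instance $\Theta_{\Theta_x\circ\Theta_h\circ g}$ and $\Theta_h\circ\Theta_g\circ\Theta_x$ in the associativity axiom.

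The key trick is to lift each such equality to an equality of monoidal natural transformations between these functors, and only then evaluate at $f(j)$. This works because every 2-cell involved is, by construction, a component of a monoidal transformation. The structural 2-cells of $\mathcal{K}_{\mathrm{SList}}$ are of the form $\Theta_\psi$ or $\mu$, and they are mapped by the monoidal functor $\Theta_x$. By uniqueness, it then suffices to compare components on singletons $[k]$, where everything collapses to characterizing isomorphisms and cancels, as in the proof of Proposition~\ref{kleisli-propdef}.

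\textbf{Step 4: the final monoidality claim.} The monoidal structure on $\mathsf{C}_{\mathcal{K}}(f)$ composes two structures:
\begin{itemize}
\item the pointwise monoidal structure on $\Psi_{K,\mathsf{C}}$, using $\Theta_{x\otimes y}\simeq \Theta_x\otimes\Theta_y$, which is again defined uniquely on singletons;
\item the strict monoidal structure on $f^*$.
\end{itemize}
Monoidality of the unitor, compositor and action on 2-cells is then once more an equality of monoidal transformations checked on singletons.

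The main obstacle I expect is Step 3's associativity axiom. There one must carefully identify each composite as a component of a monoidal transformation between the correct pair of monoidal functors: $\Theta_x$ applied to $\mu$, composed with $\mu_{\mathsf{C}}$ at different objects. Only then can uniqueness on singletons be applied. A further difficulty is managing the casts and whiskerings needed so that the singleton reduction is mechanical enough for \lean{simp}.
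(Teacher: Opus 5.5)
Your proposal is correct and follows essentially the same route as the paper. Both build the symmetric monoidal structure on $\Psi_{K,\mathsf{C}}$, obtain every 2-cell from the universal property of symmetric lists, and discharge each coherence axiom by recognizing it as an equality of components of monoidal transformations between monoidal functors out of $\mathrm{SList}$, then reducing to singletons. The paper makes one ingredient explicit that your ``defined uniquely on singletons'' step implicitly needs: $\Theta_x\otimes\Theta_y$ is symmetric monoidal because the tensor bifunctor of $\mathsf{C}$ is itself symmetric monoidal. The paper also sets up the monoidal structure on $\Psi_{K,\mathsf{C}}$ first rather than last, which is an immaterial difference of ordering.
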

\begin{remark}
  One feature of the pseudofunctor described in Proposition~\ref{exists-pseudo-kleisli} is that it
  relies exclusively on the universal property of symmetric lists as a free symmetric monoidal category. We could
  define a bicategory analogue to \(\mathcal{K}_{\mathrm{SList}}\) by replacing symmetric
  lists with any other model for free symmetric monoidal categories (e.g., the ``tautological'' one that we used
  in subsection~\ref{slist-vs-fsmc} as part of the proof that symmetric lists are free symmetric monoidal categories)
  and a similar construction would be possible.

  This is explained by the fact that the pseudofunctor described in Proposition~\ref{exists-pseudo-kleisli} is evidently the
  restriction of the bicategorical Yoneda embedding for the bicategory \(\mathrm{SymmMonCat}\) of symmetric monoidal
  categories, symmetric monoidal functors and monoidal natural transformations along a pseudofunctor
  \(\mathcal{K}_{\mathrm{SList}} \to \mathrm{SymmMonCat} \) that sends \(J\)
  to the category \(\mathrm{SList}(J)\). If one is willing to believe that \(\mathrm{SymmMonCat}\) is
  biequivalent to the bicategory of pseudoalgebras for the relative pseudomonad (in the sense of~\cite{arkor2025}) of
  symmetric lists, then the pseudofunctor in the above definition is exactly the restriction of
  the (bicategorical) Yoneda embedding along the canonical fully faithful functor from the Kleisli bicategory.
  While these generalities on pseudomonads may help in conceptually understanding
  the construction in Proposition~\ref{exists-pseudo-kleisli} and see how it is parallel to the 1-categorical phenomenon of~\cite[Th. 1.2]{Day_1977}, they are not strictly needed for our development, so we do not formalize them.
\end{remark}
The bulk of the \smcLinkRange{SymmMonCoherence/SList/Kleisli.lean}{317}{384}{formalization} of Proposition~\ref{exists-pseudo-kleisli} consists of constructing the \smcLinkRange{SymmMonCoherence/SList/Substitution.lean}{51}{102}{symmetric monoidal structure} on the monoidal extension functor
\(\Psi_{K, C} : \mathsf{C}^{K} \to \mathsf{C}^{\mathrm{SList}(K)}\), and proving that all the 2-morphisms that appear
(the action on 2-morphisms, as well as the unitality and composition isomorphisms) are monoidal with respect to
this structure. The monoidal structure on the extension functor is essentially provided by the fact that in a symmetric
monoidal category \(\mathsf{C}\), the tensor product bifunctor is itself symmetric monoidal.
Once this monoidal structure is obtained, one can construct all the 2-isomorphisms that appear in the proposition by applying
the universal property of symmetric lists as a free symmetric monoidal category.

The remaining proof obligations (compatibilities with associators and unitors) are all formalized in the same way as the ones
from Proposition~\ref{kleisli-propdef}: when unfolded and when working componentwise,
every formula that appears is an equality of components of some
\emph{monoidal} natural transformation between monoidal functors from a category of symmetric lists to \(\mathsf{C}\),
and once again, the proofs reduce to bookkeeping (and the characterizing properties of the associators and unitors of the Kleisli bicategory)
that automation tactics like \lean{simp} can discharge once we reduce to singleton symmetric lists.

As one would expect from its interpretation as a suitable Yoneda embedding, the encoding of symmetric monoidal
categories as pseudofunctors out of the opposite of the Kleisli bicategory is compatible with morphisms of symmetric
monoidal categories:
\begin{lemma}\label{unbias-lax}
Let \( \mathsf{C} \) and \( \mathsf{D} \) be symmetric monoidal categories and let \( F : \mathsf{C} \to \mathsf{D} \) be a
lax braided functor (i.e., a lax monoidal functor that respects the braiding).
Postcomposition with \( F \) extends to a lax natural transformation of pseudofunctors
\( \mathsf{C}_{\mathcal{K}} \to \mathsf{D}_{\mathcal{K}}\).
The naturality square for a morphism \( f : J \rightsquigarrow K \) is given by the diagram,
\[\begin{tikzcd}
	{\mathsf{C}^K} & {\mathsf{C}^{\mathrm{SList}(K)}} & {\mathsf{C}^J} \\
	{\mathsf{D}^K} & {\mathsf{D}^{\mathrm{SList}(K)}} & {\mathsf{D}^J}
	\arrow["{\Psi_{\mathsf{C}, K}}", from=1-1, to=1-2]
	\arrow["{F_*}"', from=1-1, to=2-1]
	\arrow["{f^*}", from=1-2, to=1-3]
	\arrow["{F_*}", from=1-2, to=2-2]
	\arrow[equals, from=1-3, to=2-2]
	\arrow["{F_*}", from=1-3, to=2-3]
	\arrow["\mu"', Rightarrow, from=2-1, to=1-2]
	\arrow["{\Psi_{\mathsf{D}, K}}"', from=2-1, to=2-2]
	\arrow["{f^*}"', from=2-2, to=2-3]
\end{tikzcd}\]
where the component at \(X : K \to \mathsf{C}\) of \( \mu \) is the unique
monoidal natural transformation \(\Theta_{F_*(X)} \to F_*(\Theta_X)\) such that
the component at the singleton symmetric list \([k]\) is the composite
\[\begin{tikzcd}
    {\Theta_{F_*(X)}([k])} & {F(X(k))} & {F(\Theta_{X}([k]))}
    \arrow["{\mathrm{def}}", from=1-1, to=1-2]
    \arrow["{F(\mathrm{def})}", from=1-2, to=1-3]
\end{tikzcd}\]

If \( F \) is (strong) monoidal, \(\mu\) is a natural isomorphism and thus the natural transformation
is a strong natural transformation of pseudofunctors.
\end{lemma}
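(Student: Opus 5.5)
The plan is to build the lax transformation in three stages: its components, then its naturality 2-cells \(\mu\), then the lax-transformation axioms. At each stage the strategy is the one used for Propositions~\ref{kleisli-propdef} and~\ref{exists-pseudo-kleisli}. Every 2-cell we need is a monoidal natural transformation between monoidal functors out of some \(\mathrm{SList}(-)\). Such a transformation exists and is unique once its components at singleton lists are prescribed. The relevant universal property is the one for transformations from a symmetric monoidal functor to a lax braided functor out of the free symmetric monoidal category, transported to \(\mathrm{SList}\) via Theorem~\ref{slist_fsmc}.

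\textbf{Components and naturality cells.} The component at a set \(J\) is the functor \(F_* : \mathsf{C}^J \to \mathsf{D}^J\). For \(X : K \to \mathsf{C}\), consider the two functors \(\Theta_{F_*(X)} = \Psi_{\mathsf{D},K}(F_* X)\) and \(F \circ \Theta_X\). The first is symmetric monoidal. The second is lax braided, being the composite of a symmetric monoidal functor with the lax braided functor \(F\). The universal property therefore produces a unique monoidal transformation \(\mu_X : \Theta_{F_*(X)} \to F\circ\Theta_X\) with the prescribed singleton components. Naturality of \(\mu_X\) in \(X\) follows from uniqueness: for \(u : X \to X'\), both composites around the naturality square are monoidal transformations \(\Theta_{F_*X} \to F\circ\Theta_{X'}\), and they agree on singletons by naturality of the characterizing isomorphisms of \(\Theta\). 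Whiskering \(\mu\) with \(f^*\) gives the naturality 2-cell for \(f\).

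\textbf{Axioms.} Three conditions remain to be checked.
\begin{itemize}
\item Naturality of the 2-cells in 2-morphisms \(\varphi : f\to f'\). This is immediate, since \(\varphi^*\) acts by precomposition and commutes with whiskering by \(\mu\).
\item Compatibility with the unitors, i.e.\ with the isomorphisms \(\Theta_x\circ\iota_J\cong x\). This is checked componentwise on elements of \(J\), and both sides reduce to \(F(\mathrm{def})\circ\mathrm{def}\).
\item Compatibility with the composition isomorphisms \(\mu_{\mathsf C}\), \(\mu_{\mathsf D}\) of Proposition~\ref{exists-pseudo-kleisli}. Evaluated at \(x : L\to\mathsf C\), both sides are morphisms \(\Theta_{\Theta_{F_*x}\circ g}\to F\circ\Theta_g\circ\Theta_x\), which we restrict along \(f\).
\end{itemize}

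\textbf{Main obstacle.} The composition compatibility is the step I expect to be hardest. Both sides are built from whiskerings and composites of monoidal transformations, so each is monoidal as a transformation out of \(\mathrm{SList}(K)\) into the lax braided functor \(F\circ\Theta_g\circ\Theta_x\). It then suffices to compare them at singletons \([k]\). There they unfold to characterizing isomorphisms of \(\Theta_g\), \(\Theta_x\), \(\Theta_{F_*x}\), together with \(F\) applied to such isomorphisms, and one component \((\mu_x)_{g(k)}\) evaluated at a non-singleton list \(g(k)\). That last term is the difficulty. I would handle it by not evaluating \(\mu_x\) at \(g(k)\) directly, and instead rewriting with the defining property of the composite as a monoidal transformation \(\Theta_{\Theta_{F_* x}\circ g}\to\cdots\) so that it matches on \(g(k)\). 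The point needing the most care is establishing that \(\mathsf{NatTrans.IsMonoidal}\) holds for each whiskering by the merely lax \(F\); the remaining bookkeeping should then go through with \lean{simp}.

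\textbf{Strong case.} When \(F\) is strong monoidal, \(F\circ\Theta_X\) is symmetric monoidal and \(\mu_X\) is a monoidal transformation between symmetric monoidal functors out of free symmetric monoidal categories whose singleton components are isomorphisms. Its inverse is then obtained the same way, by the universal property applied in the other direction together with uniqueness. Hence \(\mu\) is invertible and the transformation is strong.
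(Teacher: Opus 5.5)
Your proposal is correct and follows the same approach as the paper. The paper only sketches this argument and defers to the formalization: $\mu_X$ comes from the universal property for monoidal transformations from a symmetric monoidal functor to a lax braided one out of symmetric lists, and each lax-transformation axiom is checked by reducing to singleton components, where only the characterizing isomorphisms of $\Theta$ remain. The non-singleton term $(\mu_x)_{g(k)}$ you flag as the main obstacle is in fact harmless: at $[k]$ it appears on both sides of the composition axiom (once via $\Psi_{\mathsf{D},K}$ applied to the naturality cell of $g$, once as $(\mu_x)_{\Theta_g([k])}$), and the two occurrences agree by naturality of $\mu_x$ along $g(k)\cong\Theta_g([k])$, so no extra trick is needed.
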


We \smcLinkRange{SymmMonCoherence/SList/ToPseudofunctor/StrongTrans.lean}{86}{97}{implement} the above lemma in our
formalization, but unfortunately cannot go further than this and
cannot formalize yet that monoidal natural transformations of lax braided monoidal functors
induce modifications of the corresponding lax natural transformations:
at the time of writing, modifications in \mathlib are currently only defined between oplax natural
transformations or between strong natural transformations.

As the pseudofunctor \(\mathsf{C}_{\mathcal{K}}\) fully encodes the process of instantiating universal formulas of
symmetric lists in a given symmetric monoidal category \(\mathsf{C}\), we can now give a precise (and hence
formalizable) meaning to the idea that the formulas involved in Theorem~\ref{main_th} are all universal:
the pseudofunctor
  \[\mathsf{C}^{\otimes} : \mathrm{Pith}\left(\mathrm{Span}\left({\mathrm{Fin}}\right)\right) \to \mathrm{Cat}\]
we are trying to construct is actually the composition of \(\mathsf{C}_{\mathcal{K}}\) with a pseudofunctor
\[
  \Lambda : \mathrm{Pith}\left(\mathrm{Span}\left({\mathrm{Fin}}\right)\right) \to
    \mathcal{K}_{\mathrm{SList}}^{\mathrm{op}}.
\]
Constructing such a pseudofunctor \( \Lambda \) is hence our new main goal that we must achieve
in order to complete the proof of Theorem~\ref{main_th}.

\subsection{Duality for symmetric lists}~\label{subsubsection-slist-duality}
As an important step towards building a pseudofunctor \(\Lambda\) satisfying our requirements, we need to abstract
yet a few more intermediate constructions. As we can see from the informal description of \(\mathsf{C}^{\otimes}\) in
Theorem~\ref{main_th}, the covariant part of the pseudofunctor involves taking tensor products of families
indexed by the fibers of a function at a point.
It thus makes sense to abstract first the corresponding operation at the level of groupoids
of finite sets lying over a given set, before moving on to symmetric lists.
To this end, we abstract the following
\begin{propdef}\label{def-fibAt-Fin}
  Let \(J\) be a set and let \(j \in J\).
  The construction that sends a set \(X\) equipped with the function \(f : X \to J\) to \(f^{-1}\left(\{j\}\right)\)
  and sends a bijection of finite sets over \(J\)  to the restriction of the bijection to the fiber at \( j \) extends
  to a symmetric monoidal functor
  \(\mathrm{fib}_{j} : \mathrm{Core}\left(\mathrm{Fin}_{/J}\right) \to \mathrm{Core}(\mathrm{Fin}).\)
\end{propdef}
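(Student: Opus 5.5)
The plan is to build \(\mathrm{fib}_j\) in three layers: a plain functor between groupoids, then a monoidal structure coming from the cocartesian structure, and finally compatibility with the braiding.

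\textbf{The functor.} An object of \(\mathrm{Core}(\mathrm{Fin}_{/J})\) is a finite set \(X\) with \(f : X \to J\), and a morphism \((X,f) \to (Y,g)\) is a bijection \(\sigma : X \iso Y\) with \(g\sigma = f\). On objects I set \(\mathrm{fib}_j(X,f) := f^{-1}(\{j\})\), which is finite as a subset of \(X\). If \(f(x) = j\) then \(g(\sigma(x)) = f(x) = j\), so \(\sigma\) maps \(f^{-1}(\{j\})\) into \(g^{-1}(\{j\})\). Applying the same reasoning to \(\sigma^{-1}\) gives the inverse, so the restriction \(\sigma|_j\) is a bijection. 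Restriction strictly commutes with identities and composition, which makes functoriality immediate; in Lean I would take the fiber to be the subtype \(\{x \mid f\,x = j\}\) and define the map via \lean{Equiv.subtypeEquiv}.

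\textbf{The monoidal structure.} The monoidal structure on \(\mathrm{Core}(\mathrm{Fin}_{/J})\) is induced by the cocartesian structure of sets:
\begin{itemize}
  \item \((X,f) \otimes (Y,g) = (X \sqcup Y, [f,g])\), with unit \((\emptyset, !)\);
  \item associators, unitors and braidings are the canonical bijections of disjoint unions, all of which lie over \(J\).
\end{itemize}
The structure maps for \(\mathrm{fib}_j\) are the canonical bijections
\[\mu_{X,Y} : f^{-1}(\{j\}) \sqcup g^{-1}(\{j\}) \iso [f,g]^{-1}(\{j\}) \qquad\text{and}\qquad \epsilon : \emptyset \iso !^{-1}(\{j\}).\]
The bijection \(\mu_{X,Y}\) sends \(\mathrm{inl}\,x \mapsto \mathrm{inl}\,x\) and \(\mathrm{inr}\,y \mapsto \mathrm{inr}\,y\); its inverse does case analysis on the sum, using that \([f,g](\mathrm{inl}\,x) = f(x)\) (in Lean, \lean{Equiv.subtypeSumEquiv}-style).

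\textbf{The coherence axioms.} Every remaining axiom — naturality of \(\mu\), the associativity and unitality axioms for \(\mu,\epsilon\), and compatibility with braidings — is an equality of bijections between finite sets, that is, of functions. Each is therefore checked pointwise. I would proceed by extensionality on an element of the fiber, which is a pair \(\langle z, h\rangle\) with \(z\) in an iterated sum, then destruct \(z\) into its summands; each case then reduces by \lean{simp} to \(\mathrm{rfl}\). No higher data is involved, since \(\mathrm{Core}(\mathrm{Fin})\) is a groupoid of sets with bijections.

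\textbf{Main obstacle.} The difficulty will be the DTT bookkeeping rather than any mathematical content, because the fiber of a composite like \([[f,g],h]\) is a subtype whose membership proofs must be transported along the associator. The tactic is to never state object equalities. Instead I would phrase all structure maps as explicit equivalences, and prove the coherences via \lean{Equiv.ext} followed by \lean{Subtype.ext} and \lean{rcases} on sums, so that proof-irrelevance of the membership predicate discharges the dependent components automatically. If \lean{Fin}-based models are used for \(\mathrm{Fin}\) rather than arbitrary finite types, I would first prove the statement for finite types and transport it.
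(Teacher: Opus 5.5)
Your proposal is correct and is essentially the paper's approach. The paper says only that formalizing this construction is ``rather straightforward,'' and your direct construction is that routine argument: restrict bijections to fibers, use the canonical sum-of-fibers bijections as structure maps, and check coherences elementwise after case analysis on sums. The one difference is cosmetic: the formalization uses \(\mathrm{SList}(\mathrm{Unit})\) in place of \(\mathrm{Core}(\mathrm{Fin})\) as the target, justified by the symmetric monoidal equivalence from the coherence theorem, in the same spirit as your closing remark about transporting between models.
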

Taken in families, this operation realizes the classical correspondence between sets over a base and families of sets:
\begin{lemma}
  Let \(J\) be a finite set, the symmetric monoidal functor
  \[\mathrm{Fib}_{J} : \mathrm{Core}\left(\mathrm{Fin}_{/J}\right) \to \mathrm{Core}(\mathrm{Fin})^J \]
    induced by the family of symmetric monoidal functors \(\left(\mathrm{fib}_{j}\right)_{j \in J}\)
  extends to a symmetric monoidal equivalence of categories, where an inverse is given by the functor that
  sends a family of sets \((X_j)_{j \in J}\) to their disjoint union, equipped with the sum of the projections.
\end{lemma}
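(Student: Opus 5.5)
I will construct an explicit inverse functor \(\Sigma_J : \mathrm{Core}(\mathrm{Fin})^J \to \mathrm{Core}(\mathrm{Fin}_{/J})\), check directly that it is inverse to \(\mathrm{Fib}_J\) up to natural isomorphism, and then get the monoidal statement for free from the fact that \(\mathrm{Fib}_J\) is already symmetric monoidal.

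\textbf{The functor \(\Sigma_J\).} On objects, \(\Sigma_J\) sends a family \((X_j)_{j\in J}\) to the sigma type \(\coprod_{j\in J} X_j\), with projection \((j,x)\mapsto j\). This set is finite because \(J\) and every \(X_j\) are finite. On morphisms, a family of bijections \((\sigma_j)_{j \in J}\) goes to the bijection \(\coprod_j \sigma_j\). This bijection commutes with the projections by construction, so it is a morphism over \(J\). Functoriality is immediate from functoriality of \(\mathrm{Equiv.sigmaCongrRight}\)-type constructions.

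\textbf{The two isomorphisms.}
\begin{enumerate}
\item The counit \(\mathrm{Fib}_J\Sigma_J \iso \mathrm{Id}\) has component at \(j\) the canonical bijection \(\{(j',x) \mid j' = j\} \iso X_j\). This is the standard equivalence between the fiber of a sigma type and the corresponding summand.
\item The unit \(\Sigma_J\mathrm{Fib}_J(X,f) \iso (X,f)\) is the canonical bijection \(\coprod_j f^{-1}(\{j\}) \iso X\), given by \((j,x)\mapsto x\). It lies over \(J\) because \(f(x)=j\) on the summand indexed by \(j\).
\end{enumerate}
Naturality of both isomorphisms holds because the bijections act pointwise on elements. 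In Lean these are extensionality checks on equivalences, and I expect \lean{ext}, \lean{simp} or \lean{grind} to discharge them once the fiber is phrased as a subtype.

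\textbf{Monoidal structure.} Given the equivalence, I will not build a monoidal structure on \(\Sigma_J\) by hand. Instead I will use the general fact that if a strong (symmetric) monoidal functor is an equivalence of categories, then its inverse inherits a symmetric monoidal structure that makes the adjoint equivalence monoidal. This is the same adjointification mechanism used in the lemma on adjointified counits. Concretely, I will apply \mathlib's construction of a monoidal inverse for a monoidal equivalence, taking the symmetric monoidal structure on \(\mathrm{Fib}_J\) from Proposition-definition~\ref{def-fibAt-Fin} via the product symmetric monoidal structure on \(\mathrm{Core}(\mathrm{Fin})^J\). I also need the mild check that the product of the symmetric monoidal functors \(\mathrm{fib}_j\) is braided.

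\textbf{Expected obstacle.} The main difficulty is likely not mathematical but the dependent-type bookkeeping of fibers. The object \(\mathrm{fib}_j\) is a subtype \(\{x \mid f(x) = j\}\), and identifying the fiber of a sigma type with \(X_j\) requires transporting along \(j'=j\), i.e.\ casts. To avoid this I would construct the counit using a cast-free equivalence: \mathlib's fiber-of-sigma equivalence, or an explicit \(\mathrm{Equiv}\) whose \lean{invFun} is \(x\mapsto\langle(j,x),\mathrm{rfl}\rangle\) and whose \lean{toFun} uses \lean{Subtype} elimination with a cast. I would then prove its simp lemmas once, so that naturality reduces to those lemmas rather than to unfolding the casts.
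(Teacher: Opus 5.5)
Your proposal is correct and is essentially the argument the paper has in mind. The paper gives no written proof beyond calling the formalization ``rather straightforward'', and your direct construction (the disjoint-union inverse, the canonical fiber and sum bijections as unit and counit, and the symmetric monoidal structure transported from \(\mathrm{Fib}_J\) along the equivalence) is a faithful realization of that, with the only visible difference being that the paper's formalization uses \(\mathrm{SList}(\mathrm{Unit})\) in place of \(\mathrm{Core}(\mathrm{Fin})\) (a change it calls purely cosmetic via the coherence theorem) while you work with finite types directly.
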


The \smcLinkRange{SymmMonCoherence/SList/Additive.lean}{751}{752}{formalization} of the last two constructions is rather
straightforward. In our implementation, we work with \lean{SList Unit} instead of
\(\mathrm{Core}(\mathrm{Fin})\), but this difference is purely cosmetic, as the coherence theorem shows they are
equivalent as symmetric monoidal categories, which is a fact we also include in our
\smcLinkRange{SymmMonCoherence/SList/EquivFintypeGrpd.lean}{524}{529}{formalization}.

Through the symmetric monoidal equivalence of categories \(\mathrm{I}_C\), the symmetric monoidal functor from last lemma unearths an
interesting construction about symmetric lists:
\begin{definition}
  Let \(J\) and \(K\) be finite sets. We define the \emph{duality functor}
  \[\mathcal{D}_{J, K} : \left(\mathrm{SList}(K)\right)^{J} \to \left(\mathrm{SList}(J)\right)^{K}\]
  as the following composition
  \begin{align*}
    \left(\mathrm{SList}(K)\right)^{J}
      &\iso \left(\mathrm{Core}\left(\mathrm{Fin}_{/K}\right)\right)^{J} &\\
      &\iso \left(\mathrm{Core}\left(\mathrm{Fin}\right)^K\right)^{J} &\\
      &\iso \left(\mathrm{Core}\left(\mathrm{Fin}\right)^J\right)^{K} &\textrm{by flipping variables}\\
      &\iso \left(\mathrm{Core}\left(\mathrm{Fin}_{/J}\right)\right)^{K} &\\
      &\iso \left(\mathrm{SList}(J)\right)^{K}. &\\
  \end{align*}
\end{definition}
This functor is a duality for symmetric lists: the composite equivalence
\[\mathrm{SList}(K)^J \iso \left(\mathrm{Core}\left(\mathrm{Fin}\right)^K\right)^{J}\] allows us to think of vectors
of \(K\)-valued symmetric lists as matrices of finite sets (at least heuristically), and the functor \(\mathcal{D}_{J, K}\) is essentially the
transposition operation in this interpretation. Note that as a composition of symmetric monoidal equivalences of
categories, \(\mathcal{D}_{J, K}\) is a symmetric monoidal equivalence of categories.

\subsection{Symmetric lists and multisets}\label{subsubsection-kleisi-multiset}

As we should make no assumptions on the ordering of elements of symmetric lists that go through the inverse of
\(\mathrm{I}_{C}\), a natural context to phrase some properties of \(\mathcal{D}_{J, K}\) is the language of multisets.

Recall that a \emph{multiset} of elements of \(K\) is a list up to permutation, i.e., a finite set with
multiplicities.
In \mathlib, given a type \lean{α}, the type \lean{Multiset α} is defined
as the quotient type of the type \lean{List α} by the relation \lean{Perm}, which is defined inductively in the core library of Lean and expresses that a list is a permutation of some other list.
The definition of \lean{Perm} makes it very apparent that symmetric lists are a categorification of multisets: more
precisely, the set of isomorphism classes of symmetric lists of elements of \(K\) is in bijection with multisets
of elements of \(K\), and so we may talk about the \emph{underlying multiset} of a symmetric list.
Multisets form a commutative
monoid where the addition is induced by concatenation of lists, and it is thus not hard to see that the multiset
underlying a tensor product of symmetric lists is the sum of the multisets underlying each factor.

We will be using set-like notations for multisets, keeping in mind that multiplicities are allowed
(so that \(\{i, i\}\) is a multiset distinct from \(\{i\}\)). Given a multiset \( M \) of elements of \(K\) and
\(k \in K\), we will denote by \(\mathfrak{c}(k, M)\) the multiplicity of \(k\) in \( M \). Given a symmetric list \(L\) in some set, we will denote by \(||L||\) its underlying multiset.

Continuing the analogy between 1-morphisms in \(\mathcal{K}_{\mathrm{SList}}\) and matrices of finite sets, given that isomorphism classes of
finite sets are described by natural numbers, we can expect that the underlying multisets of
components of 1-morphisms in \(\mathcal{K}_{\mathrm{SList}}\) behave like matrices of natural numbers,
and indeed, there is a matrix-like formula for the multiset underlying the composition:
\begin{lemma}\label{compute-comp-kleisli}
  Let \(J\), \(K\) and \(L\) be sets seen as objects of \( \mathcal{K}_{\mathrm{SList}} \), let
  \( f : J \rightsquigarrow K \), \( g : K \rightsquigarrow L \) be 1-morphisms in \(\mathcal{K}_{\mathrm{SList}}\)
  and let \(j \in J\), then if \(K\) is finite,
  \[ \lVert (g \circ f)(j) \rVert = \sum\limits_{k \in K}\mathfrak{c}\left(k, \lVert f(j)\rVert\right) \cdot
    \lVert g(k) \rVert. \]
\end{lemma}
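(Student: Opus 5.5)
By definition of composition in \(\mathcal{K}_{\mathrm{SList}}\), \((g\circ f)(j) = \Theta_g(f(j))\), where \(\Theta_g : \mathrm{SList}(K)\to\mathrm{SList}(L)\) is the symmetric monoidal extension of \(g\). The plan is to show that the map \(M \mapsto \lVert \Theta_g(M)\rVert\), from symmetric lists on \(K\) to multisets on \(L\), is additive and sends \([k]\) to \(\lVert g(k)\rVert\). The formula then follows by induction on the list \(f(j)\). Note that \(\lVert\cdot\rVert\) is invariant under isomorphism of symmetric lists: morphisms correspond to bijections of index sets compatible with labels (Corollary~\ref{equiv_type_indx}), so the underlying lists are permutations of each other.

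\textbf{Step 1: the induction.} Write \(f(j) = [k_1,\dots,k_n]\). This list is isomorphic to the tensor product \([k_1]\otimes\cdots\otimes[k_n]\), and in fact, since appending is strictly associative and unital, it equals it up to \(\upsilon\)-isomorphisms. Monoidality of \(\Theta_g\) gives isomorphisms \(\Theta_g([\,])\cong[\,]\) and \(\Theta_g(x\otimes y)\cong\Theta_g(x)\otimes\Theta_g(y)\). The characterizing isomorphism gives \(\Theta_g([k])\cong g(k)\). Using these, we get by induction on \(n\)
\[
\lVert\Theta_g(f(j))\rVert=\sum_{i=1}^n \lVert g(k_i)\rVert,
\]
using that \(\lVert x\otimes y\rVert=\lVert x\rVert+\lVert y\rVert\) (append on underlying lists) and \(\lVert[\,]\rVert=0\).

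\textbf{Step 2: regrouping.} Regroup the sum by the value of \(k_i\). The sum \(\sum_i \lVert g(k_i)\rVert\) is a sum over the multiset \(\lVert f(j)\rVert\), namely \((\lVert f(j)\rVert).\mathrm{map}(\lVert g(\cdot)\rVert).\mathrm{sum}\). Since \(K\) is finite, this equals \(\sum_{k\in K}\mathfrak{c}(k,\lVert f(j)\rVert)\cdot\lVert g(k)\rVert\). This is a standard multiset identity: decompose a multiset as \(\sum_{k\in K}\mathfrak{c}(k,M)\cdot\{k\}\), or argue by multiset induction, where adding \(k_0\) increases exactly one count by one. Finiteness of \(K\) is used only to make the outer sum a finite sum.

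\textbf{Main obstacle.} The mathematics is trivial; the difficulty is the formal bookkeeping in Step 1. One must avoid equalities of objects and extract multiset equalities from isomorphisms. I would first prove a general lemma: \(x\cong y\) in \(\mathrm{SList}\) implies \(\lVert x\rVert=\lVert y\rVert\), via \(\mathrm{toEquiv}\) and \(\mathrm{getElem\_toList\_toEquiv}\), which give a permutation of the underlying lists. With this lemma, every step can be handled by transporting along isomorphisms, and the induction becomes an induction on \(f(j)\) through its cons structure, using \(x\dblcolon l = [x]\otimes l\).
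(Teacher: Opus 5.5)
Your proof is correct and follows the paper's approach: reduce to computing \(\lVert\Theta_g(X)\rVert\) for an arbitrary symmetric list \(X\), use the monoidality of \(\Theta_g\) to turn tensor products into sums of multisets, and conclude from the singleton case \(\Theta_g([k])\cong g(k)\). Your explicit regrouping step only spells out the additivity of the right-hand side in \(X\) (and its value on singletons), which the paper leaves implicit.
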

\begin{proof}
  By inspecting the definition of composition in the Kleisli bicategory, one sees that it suffices to show that
  for every function \(f : K \to \mathrm{SList}(L)\) and for every symmetric list \(X \in \mathrm{SList}(K)\), the equality \[
    ||\Theta_f(X)|| = \sum\limits_{k \in K} \mathfrak{c}\left(k, ||X||\right) \cdot ||f(k)||
  \]
  holds. This formula turns tensor product of symmetric lists into additions of multisets, so it suffices to show it
  in the case where \(X\) is a singleton symmetric list \([k]\). In this case, the formula is tautological as
  \(\Theta_f([k])\) and \(f(k)\) are isomorphic by definition of \(\Theta\), and so have the same underlying multiset.
\end{proof}

Similarly, the multisets underlying the duality functor \(\mathcal{D}_{J, K}\) can be computed with matrix-like formulas,
cementing the interpretation of \(\mathcal{D}_{J, K}\) as a transposition operator.
\begin{lemma}\label{compute-duality}
  Let \(J\) and \(K\) be finite sets, and let \(\left(X_j\right)_{j \in J}\) be a family of symmetric lists of elements
  of \(K\). Let \(k \in K\), then
  \[ \lVert \mathcal{D}_{J, K}(X)(k) \rVert = \sum\limits_{j \in J} \mathfrak{c}(k, X(j)) \cdot \{j\}.\]
\end{lemma}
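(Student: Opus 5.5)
The plan is to track underlying multisets through each of the five equivalences defining \(\mathcal{D}_{J,K}\). The underlying multiset of a symmetric list is an isomorphism invariant, so it suffices to compute it up to isomorphism at each stage. The key bookkeeping device is the following observation. For an object \((A, p : A \to K)\) of \(\mathrm{Core}(\mathrm{Fin}_{/K})\), define its multiset as \(\sum_{a\in A}\{p(a)\}\). Then the equivalence \(\mathrm{I}_K\) preserves this invariant, since \(\mathrm{I}_K(L)\) is the set of indices of \(L\) with \(i \mapsto L[i]\). By Corollary~\ref{equiv_type_indx}, an inverse of \(\mathrm{I}_K\) sends \((A,p)\) to a symmetric list whose multiset is \(\sum_{a\in A}\{p(a)\}\). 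Likewise, for the equivalence \(\mathrm{Fib}_K\) and its inverse (disjoint union with the sum of the projections), the multiset of \((A,p)\) equals \(\sum_{k\in K}\lvert p^{-1}(k)\rvert\cdot\{k\}\).

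The steps, in order, are as follows. First, starting from \(X = (X_j)_{j\in J}\), apply \(\mathrm{I}_K\) componentwise to get \((A_j, p_j)\), where \(A_j\) is the index set of \(X_j\). This gives \(\lvert p_j^{-1}(k)\rvert = \mathfrak{c}(k, \lVert X_j\rVert)\), because multiplicity counts indices \(i\) with \(X_j[i]=k\). Second, after \(\mathrm{Fib}_K\) and flipping, the \(K\)-family of \(J\)-families of finite sets is \(k \mapsto (p_j^{-1}(k))_{j\in J}\). Third, the inverse of \(\mathrm{Fib}_J\) produces \(\bigl(\coprod_{j} p_j^{-1}(k), \text{proj}\bigr)\) over \(J\), whose multiset is \(\sum_j \lvert p_j^{-1}(k)\rvert\cdot\{j\} = \sum_j \mathfrak{c}(k,X(j))\cdot\{j\}\). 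Fourth, applying the inverse of \(\mathrm{I}_J\) preserves this multiset by the observation above, which gives the formula.

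The main obstacle is not the mathematics but the fact that the inverses of \(\mathrm{I}_J\) and of \(\mathrm{Fib}_J\) are only defined up to isomorphism (via the essential-surjectivity choice of a bijection with \(\mathrm{Fin}(k)\)). One must therefore state and prove an invariance lemma: the multiset of \(\mathrm{I}_J^{-1}(A,p)\) equals \(\sum_{a\in A}\{p(a)\}\). I would prove this by first showing that isomorphic symmetric lists have equal multisets, using \texttt{toEquiv} and \texttt{SList.getElem\_toList\_toEquiv} to get a permutation of lists. Then I would apply it to the counit isomorphism \(\mathrm{I}_J(\mathrm{I}_J^{-1}(A,p)) \simeq (A,p)\), since a bijection over \(J\) preserves the fiberwise cardinalities \(\mathfrak{c}\). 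Once this lemma and the analogous statement for fibers (cardinality of a fiber equals a \texttt{Multiset.count}) are in hand, the remaining computation is a sum-over-disjoint-union rewrite, \(\lvert\coprod_j B_j\rvert\) fiberwise, which should be dischargeable by \texttt{simp} with \texttt{Finset.card\_sigma}-type lemmas.
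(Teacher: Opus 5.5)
Your proposal is correct and follows essentially the same route as the paper: unfold the five-step definition of \(\mathcal{D}_{J,K}\), then reduce to the fact that the multiplicity of \(j\) in \(\mathrm{I}_J^{-1}\bigl(\coprod_{j} p_j^{-1}(k)\bigr)\) is the cardinality of the fiber over \(j\), which is \(\mathfrak{c}(k, \lVert X(j)\rVert)\). The only difference is presentational. The paper first reduces to multiplicities by multiset extensionality, while you carry the whole multiset \(\sum_{a}\{p(a)\}\) through every stage and make explicit the invariance lemma for the non-canonical inverse of \(\mathrm{I}_J\), which the paper leaves to ``the definitions''.
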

\begin{proof}
  Since two multisets are equal if and only if they have the same multiplicity at every element, the formula reduces to proving
  that
  \[\mathfrak{c}(j, \lVert\mathcal{D}_{J, K}(X)(k)\rVert) = \mathfrak{c}(k, \lVert X(j) \rVert)\] for every
  \(j \in J\) and every \(k \in K\).
  By unfolding the definition of \(\mathcal{D}_{J, K}\), it suffices to see that for every finite set \(S\) over \(J\),
  \(\mathfrak{c}(j, \mathrm{I}_{J}^{-1}(\mathrm{Fib}_J^{-1}(S)))\) is the cardinality of the preimage of \(j\) in \(S\),
  which we can see from the definitions.
\end{proof}

Because multisets are precisely isomorphism classes of symmetric lists, proving equalities of underlying multisets is a convenient computational
tool to assert the existence of \emph{some} isomorphism between symmetric lists.
We have seen already that there are situations where there might exist many such non-equal isomorphisms: the set of
automorphisms of the symmetric list that repeats the same element \(n\) times is the symmetric group \(\Sigma_n\),
as follows from Corollary~\ref{equiv_type_indx}. As we explain and formalize below, the presence of repeated elements in the
list is in fact the only possible reason for the existence of multiple isomorphisms between two symmetric lists.

\begin{definition}
  Let \(J\) be a set and let \(L\) be a symmetric list. We say that \(L\) is \emph{linear} if the underlying list
  of \(L\) has no duplicate.
\end{definition}
Equivalently, a symmetric list \(L\) is linear if all elements of its underlying multiset have multiplicity at most one.
Correspondingly, in our formalization, we introduce
\begin{leancode}
class Linear (L : SList C) where
  nodup : L.toList.Nodup
\end{leancode}
This notion recovers another classical form of the coherence theorem for symmetric monoidal categories:
\begin{lemma}\label{linear-iff}
  Let \(L_1\) and \(L_2\) be two symmetric lists with values in a set \(J\).
  \begin{enumerate}
    \item If there is a morphism \(L_1 \to L_2\), then \( L_2 \) is linear if and only if \( L_1\) is linear.
    \item If either \(L_1\) or \(L_2\) is linear, there is at most one morphism from \(L_1\) to \(L_2\).
  \end{enumerate}
\end{lemma}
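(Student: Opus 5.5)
Both parts follow from Corollary~\ref{equiv_type_indx}, which identifies morphisms \(L_1 \to L_2\) with bijections \(\phi : \{0,\ldots,\mathrm{length}(L_2)-1\} \iso \{0,\ldots,\mathrm{length}(L_1)-1\}\) satisfying \(L_1[\phi(i)] = L_2[i]\) for all \(i\) (in Lean, \lean{toEquiv}, \lean{SList.getElem_toList_toEquiv} and \lean{SList.hom_eq_iff_toEquiv_eq}).

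For the first point, fix a morphism \(f : L_1 \to L_2\) and set \(\phi = \mathrm{toEquiv}(f)\). The identity \(L_2[i] = L_1[\phi(i)]\) says that the underlying list of \(L_2\) is the underlying list of \(L_1\) reindexed along a bijection of index types. Hence the two lists are permutations of each other; equivalently, they have the same underlying multiset. Having no duplicates is invariant under permutation: \(L_2[i] = L_2[i']\) forces \(L_1[\phi(i)] = L_1[\phi(i')]\), and conversely. In Lean this is \lean{List.Perm.nodup_iff} once a \lean{List.Perm} is extracted from \(\phi\). Alternatively, one can argue directly with injectivity of \(i \mapsto L[i]\) composed with the bijection \(\phi\), which avoids extracting a \lean{Perm} at all. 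Since every morphism is invertible, we may also assume without loss of generality that \(L_1\) is the linear one.

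For the second point, the first point lets us assume both \(L_1\) and \(L_2\) are linear, since linearity of one implies linearity of the other as soon as a morphism exists. Let \(f, g : L_1 \to L_2\) and write \(\phi = \mathrm{toEquiv}(f)\), \(\psi = \mathrm{toEquiv}(g)\). For each index \(i\) of \(L_2\) we have
\[L_1[\phi(i)] = L_2[i] = L_1[\psi(i)].\]
Linearity of \(L_1\) means \(k \mapsto L_1[k]\) is injective, so \(\phi(i) = \psi(i)\). Thus \(\phi = \psi\), and \lean{SList.hom_eq_iff_toEquiv_eq} (Theorem~\ref{w_faithful}) gives \(f = g\).

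The argument is short, so the main obstacle is formal rather than mathematical. One must transport the \lean{Nodup} hypothesis to injectivity of indexing on \lean{Fin L.length}, via \lean{List.nodup_iff_injective_get} or a similar lemma. One must also handle the \lean{Fin} coercions in \lean{getElem_toList_toEquiv}. I would first try to phrase both parts through the injectivity characterization, and to derive \(L_1\) linear \(\Leftrightarrow\) \(L_2\) linear from \(\mathrm{toList}\,L_2 = \mathrm{toList}\,L_1 \circ \phi\) as functions on indices, using that precomposition with a bijection preserves and reflects injectivity.
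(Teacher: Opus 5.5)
Your proposal is correct and follows essentially the same route as the paper. For the first point, you use that a morphism makes the underlying lists permutations of each other and that having no duplicates is invariant under permutation. For the second point, you use that injectivity of indexing into a linear list forces the bijection attached to any morphism \(L_1 \to L_2\) to be unique, and then conclude via Corollary~\ref{equiv_type_indx}.
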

\begin{proof}
  Since there is a morphism between two symmetric lists if and only if the underlying lists are permutations of each other, the
  first point follows from the fact that a list has no duplicate if and only if it is a permutation of a list that has
  no duplicates.

  For the second point, assuming that one of the lists is linear, by Corollary~\ref{equiv_type_indx}
  the set of morphisms from \( L_1 \) to \( L_2 \) is in bijection with bijections
  \[\phi : \{0, \ldots, \mathrm{length}(L_2) - 1\} \iso \{0, \ldots, \mathrm{length}(L_1) - 1\}\]
    such that \(L_2[\phi(i)] = L_1[i]\). Since the lists both have no duplicate (by the first point), if such a bijection \(\phi\) exists,
    the value of \(\phi(i)\) must be the unique index of \(L_1\) that has the value \(L_2[\phi(i)]\). Hence, there
    is at most one such bijection.
\end{proof}
In Lean, the conclusion of the second point of the above lemma is naturally expressed by the \lean{Subsingleton} class from
the core library of the proof assistant that expresses that a type has at most a single inhabitant.
Thus, the formalization of Lemma~\ref{linear-iff} takes the form of a registration of a
series of \lean{Subsingleton} instances:
\begin{leancode}
lemma linear_iff_of_iso {x y : SList C} (e : x ≅ y) : Linear x ↔ Linear y := by …
instance subsingleton_hom_of_linear_left (L L' : SList C) [h₀ : Linear L] :
    Subsingleton (L ⟶ L') where …
instance subsingleton_iso_of_linear_right (L L' : SList C) [Linear L'] :
    Subsingleton (L ≅ L') where …
instance subsingleton_hom_of_linear_right (L L' : SList C) [Linear L'] :
    Subsingleton (L ⟶ L') where …
instance subsingleton_iso_of_linear_left (L L' : SList C) [Linear L'] :
    Subsingleton (L' ≅ L) where …
\end{leancode}
Using this infrastructure lets us use the \lean{subsingleton} tactic from \mathlib in proofs,
which identifies the goal of a proof with
an equality between terms of a type with a \lean{Subsingleton} instance and closes said goal if Lean can infer this instance.

\subsection{Constructing \( \Lambda \)}

In subsection~\ref{subsubsection-slist-duality}, we constructed a duality for vectors of symmetric lists, which are the 1-morphisms in
\(\mathcal{K}_{\mathrm{SList}}\). Spans on a category \(C\) with pullbacks also have a canonical duality: the operation that sends a span
\(S : c \xleftarrow{f} \widehat{S} \xrightarrow{g} c'\) to its
transpose \({}^{t}S : c' \xleftarrow{g} \widehat{S} \xrightarrow{f} c\). Given \(f : c \to c'\),
this duality exchanges the spans \(f^*\) and \(f_!\). If one is willing to believe that the pseudofunctor
\(\Lambda: \mathrm{Pith}\left(\mathrm{Span}\left({\mathrm{Fin}}\right)\right) \to
  \mathcal{K}_{\mathrm{SList}}^{\mathrm{op}}\) we are trying to construct respects the dualities on its source and target, this leaves
only one possible definition:
\begin{thdef}\label{pbc-system-propdef}
  There is a unique \(\mathcal{K}_{\mathrm{SList}}^{\mathrm{op}}\)-valued Pith-Beck-Chevalley system on \(\mathrm{Fin}\) with the
  following properties:
  \begin{itemize}
    \item The pseudofunctor \(V : \mathrm{Fin}^{\mathrm{op}} \to \mathcal{K}_{\mathrm{SList}}^{\mathrm{op}} \) sends a set \(J\) to itself,
      and sends a function \(f : J \to K\) to the composition \[
        J \xrightarrow{f} K \xrightarrow{\iota_K} \mathrm{SList}(K).
      \]
    \item The pseudofunctor \(U : \mathrm{Fin} \to \mathcal{K}_{\mathrm{SList}}^{\mathrm{op}}\) sends a set \(J\) to itself,
      and sends \(f : J \to K\) to \(\mathcal{D}_{J,K}(V(f))\).
  \end{itemize}
\end{thdef}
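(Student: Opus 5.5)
The plan is to build every piece of 2-dimensional data of the Pith-Beck-Chevalley system (Definition~\ref{def-pbc}) from two facts. First, the relevant 1-morphisms of \(\mathcal{K}_{\mathrm{SList}}\) have components that are \emph{linear} symmetric lists. Second, by Lemma~\ref{linear-iff}, between linear symmetric lists there is at most one morphism, and there is one as soon as the underlying multisets agree. Multisets are isomorphism classes of symmetric lists, and Corollary~\ref{equiv_type_indx} provides an isomorphism from any permutation matching entries. So each structural 2-isomorphism will be produced componentwise from an equality of multisets, and every coherence axiom will follow from the \lean{Subsingleton} instances. This also gives uniqueness: the data of \(U\) and \(V\) on 1-morphisms is prescribed by the statement, and all 2-cell data (\(U_{\mathrm{comp}}\), \(U_{\mathrm{Id}}\), \(V_{\mathrm{comp}}\), \(V_{\mathrm{Id}}\), \(\mathrm{BC}\)) lives in hom-sets that are subsingletons.

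First I handle \(V\). Its components \(V(f)(j)=[f(j)]\) are singletons, hence linear. For the composition 2-cell in \(\mathcal{K}_{\mathrm{SList}}^{\mathrm{op}}\) I compare the Kleisli composite with \(\iota\circ(g\circ f)\). By Lemma~\ref{compute-comp-kleisli} the composite has multiset \(\{g(f(j))\}\), so the two sides are isomorphic singletons. The identity comparison is the same argument with the unitor data. Next I treat \(U\). By Lemma~\ref{compute-duality}, \(\lVert U(f)(k)\rVert=\sum_{j}\mathfrak{c}(k,\{f(j)\})\cdot\{j\}=f^{-1}(\{k\})\), counted without multiplicity. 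This multiset is linear, and in fact I will check linearity directly via the fibre description through \(\mathrm{I}_J\). For composites, Lemma~\ref{compute-comp-kleisli} gives
\[
\lVert (U(f)\circ U(g))(\ell)\rVert=\sum_{k\in g^{-1}(\ell)} f^{-1}(\{k\})=(gf)^{-1}(\{\ell\}),
\]
in the appropriate order dictated by the opposite bicategory. This has multiplicities at most one and equals \(\lVert U(gf)(\ell)\rVert\), so the required isomorphisms exist and are unique. All pseudofunctor axioms (associativity and unitality) are equalities of morphisms between linear lists, so they hold by Lemma~\ref{linear-iff}.

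For the base change isomorphisms, take a pullback square \(S\) with \(t\), \(l\), \(r\), \(b\) as in Definition~\ref{def-pbc}. I compute both sides of \(V(b)U(r)\) and \(U(l)V(t)\) componentwise using the two multiset lemmas. One side gives, at an element, a sum over the fibre of \(r\) of singletons indexed through \(b\) (or its transpose). The other side gives the fibre of \(l\) pushed through \(t\). The identification of these two multisets is exactly the set-theoretic statement that in a pullback of finite sets the fibres of \(l\) map bijectively onto the fibres of \(r\). Both multisets are then the same set, with no multiplicities, so the components are linear and a unique isomorphism exists; naturality is automatic in the discrete indexing. The four compatibility axioms (horizontal and vertical pasting, unit squares) are then again equalities in subsingleton hom-sets, since all objects involved are linear.

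The main obstacle I expect is the bookkeeping in these multiset computations. The variance coming from the \(\mathrm{op}\), the composition order in \(\mathcal{K}_{\mathrm{SList}}\), and the transposition \(\mathcal{D}\) must all line up. In particular the base change computation must genuinely use the pullback property and finiteness, the latter being needed to apply Lemma~\ref{compute-comp-kleisli}. A second risk is linearity of the composite components themselves: linearity has to be shown for every 1-morphism appearing in the axioms, including those with intermediate associators. I would first try to prove a general lemma that any composite of \(U\)'s and \(V\)'s along maps of finite sets has components whose multisets are fibres of some map, and hence are linear. With that lemma in hand, the remaining coherence goals reduce to invocations of the \lean{subsingleton} tactic.
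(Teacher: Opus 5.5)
Your proposal is correct and follows essentially the same route as the paper. Both arguments show that every component involved is linear, using the multiset computations of Lemma~\ref{compute-comp-kleisli} and Lemma~\ref{compute-duality}, with the pullback property giving the bijection of fibres for the base change. Both then use Lemma~\ref{linear-iff} to get existence, uniqueness, and all the coherence axioms for free.

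One simplification: your worry about proving linearity for intermediate 1-morphisms is unnecessary. Each axiom equates two 2-morphisms with the same source and target. So, as the paper notes, linearity of those two endpoints alone makes the relevant hom-set a subsingleton.
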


The description in the theorem above might \emph{a priori} appear incomplete, as it does not spell out the
pseudofunctoriality data for \(V\) and \(U\) and
does not describe the required base change isomorphisms. In fact, as we will prove below, all of these isomorphisms are
uniquely determined by the values prescribed in Theorem~\ref{pbc-system-propdef}. We will use the notion of linear
symmetric lists introduced in~\ref{subsubsection-kleisi-multiset} and Lemma~\ref{linear-iff} to make this apparent.
\begin{proof}
Observe that, given a function \(f : J \to K\), the composition
\[ J \xrightarrow{f} K \xrightarrow{\iota_K} \mathrm{SList}(K) \]
is the function \(V(f)\) that sends \(j\) to the singleton symmetric list \([f(j)]\), which is linear.
This description provides a unique unitality isomorphism: the map \(\iota_J : j \mapsto [j]\) is the identity
morphism on \( J \) in  \(\mathcal{K}_{\mathrm{SList}}\), so that \(V(\mathrm{Id}_{J})\) is tautologically isomorphic to
the identity, and since the value at every \(j \in J\) is a linear symmetric list, this isomorphism must be unique by Lemma~\ref{linear-iff}.
Similarly, there is a unique composition isomorphism: we can see from the definition of
composition in \(\mathcal{K}_{\mathrm{SList}}\) that given \(f : J \to K\) and \( g : K \to L \),
the composition \(V(g) \circ V(f)\) is a function that sends \(j \in J\) to a symmetric list isomorphic to \([g(f(j))]\); uniqueness
is again ensured by the fact that the lists are linear.

Set \(U(f) := \mathcal{D}_{J,K}(V(f))\) and let \(k \in K\). Proposition~\ref{compute-duality}
lets us compute that
\[ \lVert U(f)(k) \rVert = \sum\limits_{j \in J} \mathfrak{c}(k, \lVert V(f)(j) \rVert) \cdot \{j\}, \]
and since by definition of \(V(f)\), \(\lVert V(f)(j)\rVert = \{f(j)\}\), we obtain that \(\mathfrak{c}(k, \lVert V(f)(j) \rVert)\)
is equal to \(1\) if \(k = f(j)\) and is zero otherwise. Hence, \begin{align}
\lVert U(f)(k) \rVert = f^{-1}\left(\{k\}\right) \label{U-eval}\end{align}
where the set \(f^{-1}\left(\{k\}\right)\) is seen as a multiset where all of its elements have multiplicity one. In particular,
\(U(f)(k)\) is linear as a symmetric list.
Similarly, given \(f : J \to K\) and \(g : K \to L\), we can use Lemma~\ref{compute-comp-kleisli} to compute that for
all \(l \in L\),
\begin{align*} \lVert (U(g) \circ U(f))(l) \rVert
    &= \sum\limits_{k \in K} \mathfrak{c}(k, \lVert U(g)(l) \rVert) \cdot \lVert U(f)(k) \rVert \\
    &= \sum\limits_{k \in K} \mathfrak{c}\left(k, g^{-1}\left(\{l\}\right)\right) \cdot f^{-1}\left(\{k\}\right)\\
    &= \sum\limits_{k \in g^{-1}\left(\{l\}\right)} f^{-1}\left(\{k\}\right)\\
    &= f^{-1}\left(g^{-1}(\{l\})\right).
\end{align*}
Hence, since both sides of the previous equality are pointwise linear symmetric lists with the same underlying multisets,
there is a unique isomorphism \[U(g \circ f) \iso U(f) \circ U(g)\] by Lemma~\ref{linear-iff}.
A similar argument shows that there is a unique isomorphism \(U(\mathrm{Id}_{J}) \iso \mathrm{Id}_{J}\).
Because of the linearity of all the symmetric lists involved, compatibility with the associators and unitors for \(U\) and \(V\) is automatically satisfied, as both sides of each required identity take place in sets with at most one element.

Defining the base change isomorphisms and proving that they respect vertical and horizontal composition make similar use
of linearity of symmetric lists: given a cartesian square
\[\begin{tikzcd}
  {I} & {J} \\
  {K} & {L}
  \arrow["{t}", from=1-1, to=1-2]
  \arrow["{l}"', from=1-1, to=2-1]
  \arrow["\lrcorner"{anchor=center, pos=0.125}, draw=none, from=1-1, to=2-2]
  \arrow["S"{anchor=center}, draw=none, from=1-1, to=2-2]
  \arrow["{r}", from=1-2, to=2-2]
  \arrow["{b}"', from=2-1, to=2-2]
\end{tikzcd}\]
of finite sets, we can compute using Lemma~\ref{compute-comp-kleisli}, Lemma~\ref{compute-duality} and equation~\eqref{U-eval} that on the one hand, for \(k \in K\),
\begin{align*}
  \lVert (U(r) \circ V(b))(k) \rVert &= \sum\limits_{l \in L} \mathfrak{c}(l, \{b(k)\}) \cdot r^{-1}(\{l\}) \\
                                     &= r^{-1}(\{b(k)\}) \\
\end{align*}
which is in particular linear. On the other hand
\begin{align*}
  \lVert (V(t) \circ U(l))(k) \rVert &= \sum\limits_{i \in I} \mathfrak{c}\left(i, l^{-1}\left(\{k\}\right)\right) \cdot
                                          \{t(i)\} \\
                                     &= \sum\limits_{i \in l^{-1}(\{k\})} \{t(i)\} \\
\end{align*}
and the multiplicity of \(j \in J\) in \(\lVert (V(t) \circ U(l))(k) \rVert\) is thus the cardinality of the subset of
elements \(i \in I\) such that \(l(i) = k\) and \(t(i) = j\). Since the square \(S\) is cartesian, this cardinality is at most
one and is exactly one when \(r(j) = b(k)\), so that the sum \(\sum\limits_{i \in l^{-1}(\{k\})} \{t(i)\}\) also describes
the set \(r^{-1}(\{b(k)\})\).

Hence, since both symmetric lists have equal underlying multisets, there is \textit{some} base change isomorphism
\(U(r) \circ V(b) \iso V(t) \circ U(l)\), and, as the symmetric lists involved
are linear, this base change isomorphism must in fact be unique by Lemma~\ref{linear-iff}.

Compatibility of this base change isomorphism with respect to vertical and horizontal pasting of pullback
squares is automatic, as well as its compatibility with vertical and horizontal unit squares.
Indeed all of these identities
can be expressed as a given base change isomorphism being equal to some other composition of morphisms. By the computations
above, the sources and targets of all base change isomorphisms are pointwise linear symmetric lists, hence by the second point
of Lemma~\ref{linear-iff}, there is a unique 2-morphism from the source to the target, so all of these identities must be true.
\end{proof}

The \smcLink{SymmMonCoherence/SList/ToPseudofunctor/Defs.lean}{87}{formalization} of this theorem follows the argument above.
Most of its content is carrying out the relevant multiset computations to prove that all the symmetric lists involved
implement the \lean{Linear} typeclass we introduced earlier. The compatibilities between 2-morphisms can then all be
discharged by using the already-mentioned \lean{subsingleton} tactic.

Proposition~\ref{pbc-system-propdef} finishes the proof of Theorem~\ref{main_th}, as the Pith-Beck-Chevalley system
defined in that proposition defines a pseudofunctor
\(\Lambda: \mathrm{Pith}\left(\mathrm{Span}\left({\mathrm{Fin}}\right)\right) \to
\mathcal{K}_{\mathrm{SList}}^{\mathrm{op}}\) using Theorem~\ref{exists-pseudo-bc-system}.
Finally, we can define the sought-after pseudofunctor unbiasing a symmetric monoidal category, and complete the
proof of Theorem~\ref{main_th}:
\begin{leancode}
abbrev Λ : Bicategory.Pith (Spans FintypeCat ⊤ ⊤) ⥤ᵖ Kleisliᵒᵖ := …
-- `Kleisli.pseudoOfSymmMonCat C` is the pseudofunctor called C_𝓚 in the text
universe v u in
def pseudoOfSymmMonCat
    (C : Type u) [Category.{v} C] [MonoidalCategory C] [SymmetricCategory C] :
    Bicategory.Pith (Spans FintypeCat.{0} ⊤ ⊤) ⥤ᵖ Cat.{v, u} :=
  Λ.comp (Kleisli.pseudoOfSymmMonCat C)
\end{leancode}
Note that in the listing above, to ensure good universe properties, we are using the lowest possible universe level for
the Kleisli bicategory and for the pith of the bicategory of spans of finite types. In Lean, there is a category of finite types \lean{FintypeCat.{u}}
for every universe \lean{u}, which corresponds to the universe in which the underlying types of the objects live in. Similarly,
there is in fact a Kleisli bicategory \lean{SList.Kleisli.{u}} of symmetric lists for every universe \lean{u}, and
the pseudofunctor \(\Lambda\) is in fact a family of pseudofunctors \lean{Λ.{u} : EffBurnsideFintype.{u} ⥤ᵖ Kleisli.{u}ᵒᵖ} for every universe \lean{u}. If
\lean{C : Type u} and \lean{I : Type u'} are types in different universes, the type of functions \lean{I → C} is only
guaranteed to be in \lean{Type (max u u')}. Hence, in general, the pseudofunctor \(\mathsf{C}_{\mathcal{K}}\) that
we defined in Proposition~\ref{exists-pseudo-kleisli} might introduce a universe bump if one performs its
construction using \lean{SList.Kleisli.{u}} for a monoidal category that is not \lean{u}-small. By picking \lean{u} to
be the lowest universe level \lean{0} for the category of finite types and for the Kleisli bicategory,
we ensure that no bump happens and that the original universe levels of the symmetric monoidal category on which
we perform the construction are preserved.
\section{Conclusion and future work}
To summarize, the pseudofunctor from Theorem~\ref{main_th} is obtained as a composition described by the diagram
\[\begin{tikzcd}
	{\mathrm{Pith}(\mathrm{Span}(\mathrm{Fin}))} && {\mathcal{K}^{\mathrm{op}}} && {\mathrm{Cat}}
	\arrow["\Lambda", from=1-1, to=1-3]
	\arrow["{\mathsf{C}_{\mathcal{K}}}", from=1-3, to=1-5]
\end{tikzcd}\]
where \(\mathcal{K}\) is the Kleisli bicategory for the relative pseudomonad that sends a set to the free symmetric
monoidal category on that set. The pseudofunctor \(\mathsf{C}_{\mathcal{K}}\) is the restriction to
free symmetric monoidal categories of the (bicategorical) Yoneda embedding for the bicategory of symmetric monoidal
categories, while the pseudofunctor \(\Lambda\) bundles the link between finite sets and free symmetric monoidal
categories. The version of the coherence theorem for symmetric monoidal categories that we implement allows one
to give first a better presentation of \(\mathcal{K}\) in terms of symmetric lists, and the complete description of
morphisms of symmetric lists given in Corollary~\ref{equiv_type_indx} gives the full link between bijections of
finite sets and symmetric lists.

As future work, we intend on formalizing some applications of the unbiasing that we constructed in this article.
We expect that our work will let us formalize tensor powers of objects in a
symmetric monoidal category. More precisely, we should obtain the assignment
\(x \mapsto x^{\otimes n}\) as a functor from \(\mathsf{C}\) to \(\Sigma_n\)-equivariant objects in \(\mathsf{C}\),
by first defining a functor \[\mathrm{B}{\Sigma_n} \to \mathrm{End}_{\mathrm{Pith}(\mathrm{Span}(\mathrm{Fin}))}(\{*\}, \{*\})\]
that sends the unique object to the set \(\{1,\ldots, n\}\) and then by letting \(\mathsf{C}^\otimes\) act on it to yield a
bifunctor \(\mathrm{B}{\Sigma_n} \to \mathsf{C}^\otimes(\{*\}) \to \mathsf{C}^\otimes(\{*\})\) on which we can flip
the order of the variables to produce the desired bifunctor.

Another possible corollary of our constructions is the ability to unbias commutative monoid objects internal to symmetric
monoidal categories: by interpreting a commutative monoid object \( x \) internal to a symmetric monoidal category
\( \mathsf{C}\) as a lax symmetric monoidal functor \( \{*\} \to \mathsf{C}\), our unbiasing of lax monoidal functors in
Proposition~\ref{unbias-lax} gives a lax natural transformation from (a pseudofunctor equivalent to)
the terminal pseudofunctor to \(\mathsf{C}_{\mathcal{K}}\).
When whiskerings of lax natural transformations by pseudofunctors are defined in \mathlib, this will give the
corresponding transformation of pseudofunctors out of \(\mathrm{Pith}(\mathrm{Span}(\mathrm{Fin}))\),
encoding operations of higher arities for the commutative monoid object \( x \).

Future work also includes formalizing the fact that the functor \(\Lambda\) that we construct in this work is
locally an equivalence of categories, providing extra evidence that
\(\mathrm{Pith}(\mathrm{Span}(\mathrm{Fin}))\) \emph{is} indeed the ``pseudoalgebraic
theory'' of symmetric monoidal ordinary categories, and fully formalizing the link between symmetric monoidal ordinary
categories and Cranch's point of view \cite{cranch2010}.
This can be seen from the fact that the function/fibration correspondence realizes an equivalence between the core of spans of
finite sets from \(I\) to \(J\) and
functions \(I \to \mathrm{Core}\left(\mathrm{Fin}_{/J}\right)\), the latter being equivalent to the category of 1-morphisms
\(I \rightsquigarrow J\) in \(\mathcal{K}\) via the coherence theorem. Hence, one needs to formalize a natural isomorphism
from the composition of the functor induced by \(\Lambda\) on 1-morphisms and that second equivalence to
the function/fibration correspondence. We expect no major roadblock for this part.

The local equivalence mentioned in the previous paragraph will bring us one step closer to what
we would consider the most definitive form of unbiasing: biequivalences of bicategories between the bicategory of pseudofunctors
(with lax, resp. oplax, resp. strong natural transformations as 1-morphisms)
\( \mathrm{Pith}(\mathrm{Span}(\mathrm{Fin})) \to \mathrm{Cat}\) that preserve products, and the bicategory of symmetric monoidal
categories with lax (resp. oplax, resp. strong) monoidal functors as 1-morphisms.
To achieve this, missing basic bicategorical constructions such as biequivalences, products in bicategories,
preservation of products by pseudofunctors, local characterization of biequivalences, etc., need to be formalized and added to \mathlib.

We expect to contribute the results from this work to \mathlib over time, as we fully believe that unbiased symmetric monoidal
categories are an essential step towards enabling more complex manipulations of objects in symmetric monoidal categories
(such as the aforementioned symmetric tensor powers in general monoidal categories) as well as a necessity to bridge ordinary
category theory with higher category theory once the latter develops in \mathlib.
Similarly, we expect to contribute the missing bicategorical infrastructure mentioned in the previous paragraph to \mathlib.

\printbibliography

\end{document}